\numberwithin{equation}{section}
\newcommand{\Q}{\mathbb{Q}}
\newcommand{\N}{\mathbb{N}}
\newcommand{\R}{\mathbb{R}}
\newcommand{\eps}{\epsilon}
\newcommand{\veps}{\varepsilon}
\newcommand{\cM}{\mathcal{M}}
\newcommand{\cD}{\mathcal{D}}
\newcommand{\cF}{\mathcal{F}}
\newcommand{\cL}{\mathcal{L}}
\newcommand{\cB}{\mathcal{B}}
\def\restrict#1{\raise-.5ex\hbox{\ensuremath|}_{#1}}
\newtheorem{theorem}{Theorem}[section]
\newtheorem{lemma}[theorem]{Lemma}
\newtheorem{proposition}[theorem]{Proposition}
\newtheorem{corollary}[theorem]{Corollary}
\newenvironment{remark}[1][Remark]{\begin{trivlist}
\item[\hskip \labelsep {\bfseries #1}]}{\end{trivlist}}
\newenvironment{conjecture}[1][Conjecture.]{\begin{trivlist}
\item[\hskip \labelsep {\bfseries #1}]}{\end{trivlist}}
\begin{document}
\begin{frontmatter}

\title{On the boundary of the zero set of  super-Brownian motion and its local time.}
\runtitle{boundary of super-Brownian motion}

\begin{aug}
\author{\fnms{Thomas}
  \snm{Hughes}\thanksref{t1}\corref{}\ead[label=e1]{}} 
\author{\fnms{and Edwin} \snm{Perkins}\thanksref{t2}
\ead[label=e3]{hughes@math.ubc.ca, perkins@math.ubc.ca}}
\thankstext{t1}{Supported in part by an NSERC  CGS-D and an NSERC Discovery Grant.}
\thankstext{t2}{Supported in part by an NSERC Discovery Grant.}
\affiliation{
The University of British Columbia\thanksmark{t1} \thanksmark{t2}}


\address{Department of Mathematics\\
The University of British Columbia\\
1984 Mathematics Road\\
Vancouver, British Columbia V6T 1Z2\\
\printead{e3}}

\runauthor{Hughes and Perkins}
\end{aug}
\begin{abstract}
  If $X(t,x)$ is the density of one-dimensional super-Brownian motion, we prove that \break
  dim$(\partial\{x:X(t,x)>0\})=2-2\lambda_0\in(0,1)$ a.s. on $\{X_t\neq 0\}$, where $-\lambda_0\in(-1,-1/2)$ is the lead eigenvalue of a killed Ornstein-Uhlenbeck process.  This confirms a conjecture of Mueller, Mytnik and Perkins \cite{MMP17} who proved the above with positive probability.  To establish this result we derive some new basic properties of a recently introduced boundary local time (\cite{H2018}) and analyze the behaviour of $X(t,\cdot)$ near the upper edge of its support.  Numerical estimates of $\lambda_0$ suggest
  that the above Hausdorff dimension is approximately $.224$. 
  
   \end{abstract}

\begin{keyword}[class=AMS]
\kwd[Primary ]{60J68}
\kwd[; Secondary ]{60J55, 60H15, 28A78}
\end{keyword}

\begin{keyword}
\kwd{super-Brownian motion}
\kwd{Hausdorff dimension}
\kwd{stochastic pde}
\kwd{zero-one law}
\end{keyword}


\end{frontmatter}

\section{Introduction and Statement of Results}
Let $(X_t,t\ge 0)$ denote a super-Brownian motion on the line starting at $X_0\neq 0$ under $P^X_{X_0}$. Here $X_0\in \cM_F(\R)$, the space of finite measures on $\R$ with the topology of weak convergence, and $P^X_{X_0}$ will denote any probability under which $X$ has the above law.  Our branching rate is chosen to be one so that the jointly continuous density, $X(t,x)$, of $X_t$ for $t>0$, is the unique in law solution of the stochastic partial differential equation (SPDE)
\begin{equation}\label {SPDE}
\frac{\partial X}{\partial t}(t,x)=\frac{1}{2}\frac{\partial^2 X}{\partial x^2}(t,x)+\sqrt{X(t,x)}\dot W(t,x),\ X\ge 0,\ X(0)=X_0
\end{equation}
(see Section III.4 of \cite{P2002}). Here $\dot W$ is a space-time white noise on $[0,\infty)\times\R$, and
the initial condition means that $X_t(dx)=X(t,x)dx\rightarrow X_0(dx)$ in $\cM_F(\R)$  as $t\downarrow 0$.  \\

The boundary of the zero set of $X_t$,
\begin{equation}\label{BZdef}
BZ_t=\partial\{x:X(t,x)=0\}=\partial\{x:X(t,x)>0\},
\end{equation}
was studied in \cite{MMP17}. The increased regularity of $X$ on and near this set has played an important role in the study of SPDE's such as \eqref{SPDE} (see \cite{MPS06} and \cite{MP11}). Mytnik and Perkins (unpublished) had obtained side conditions on $X$ which would give pathwise uniqueness in \eqref{SPDE} 
but which would imply that $\text{dim}(BZ_t)$, the Hausdorff dimension of $BZ_t$, is zero. The intuition here is that solutions to \eqref{SPDE} should only separate in their respective zero sets since these are the only points at which the noise coefficient is non-Lipschitz. So the smaller this set is, the harder it will be for solutions to separate.   In \cite{MMP17} it was shown that if $-\lambda_0\in(-1,-1/2)$ is the lead eigenvalue of the killed Ornstein-Uhlenbeck operator described below, then (see Theorem~1.3  of \cite{MMP17}) in fact
\begin{equation}\label{partialdimres}
P_{X_0}^X(\text{dim}(BZ_t)=2-2\lambda_0)>0.
\end{equation}
Here 
it was also conjectured (see the comment following Theorem~1.3 in \cite{MMP17}) that 
\begin{equation}\label{dimconj}
\text{dim}(BZ_t)=2-2\lambda_0\text{ a.s. on }\{X_t\neq0\}.
\end{equation}
In any case, the rigorous bounds on $\lambda_0$ mentioned above imply the dimension of $BZ_t$ is in $(0,1)$, at least with positive probability, and the aforementioned pathwise uniqueness problem remains unresolved in spite of a recent negative result in Chen \cite{Chen15}. Here pathwise non-uniqueness to \eqref{SPDE} was shown if an innocent looking immigration term of the form $\psi(x)$ ($\psi$ smooth, non-negative and compactly supported) is added to the right-hand side of \eqref{SPDE}.  The immigration term, however, gives $BZ_t$ positive 
Lebesgue measure and this is what allows Chen to establish separation of solutions.  \\

The boundary set itself is rather delicate as small perturbations of $X$ will of course completely change the nature of $BZ_t$. In particular, it is a non-monotone function of the initial condition.  This is one reason some of the standard zero-one arguments (see, e.g., the proof of Theorem~1.3 of \cite{P90} for the dimension of the range of $X$) were not able to resolve
the conjecture \eqref{dimconj}.  Our main result (Theorem~\eqref{dimwp1} below) will use a recently constructed boundary local time, $L_t(dx)$ of $BZ_t$
to confirm \eqref{dimconj}.  The local time was constructed by one of us (TH) in \cite{H2018}. It is a random
measure supported by $BZ_t$ which we are just beginning to understand, and some of its basic properties derived here will
play a central role in our arguments.  As a random measure supported on the set of points where solutions to \eqref{SPDE} can separate, $L_t(dx)$ has the potential of playing the same role in the study of SPDE's arising from
population models that ordinary local time does for stochastic differential equations.  Of course one would 
need to construct $L$  for a much larger class of random processes.  \\
In fact numerical estimates of $\lambda_0$ due to Peiyuan Zhu suggest that \eqref{dimconj} implies
\begin{equation}\label{dimest}
\dim(BZ_t)\approx .224\text{ a.s. on }\{X_t(1)>0\},
\end{equation}
perhaps larger than one may think given that $X(t,\cdot)$ is 
H\"older $1-\eta$ in space near its zero set for any $\eta>0$ (see Theorem~2.3 in \cite{MP11}).  
We briefly discuss this approximation below and give some evidence for the accuracy
of the estimate to the digits given.\\

It will often be more convenient to work with the canonical measure of super-Brownian motion, $\N_x$, which is a more fundamental object in many ways.  Recall that $X$ arises as the scaling limit of the empirical measures of critical branching random walk. $\N_x$ is a $\sigma$-finite measure on $C([0,\infty),\cM_F(\R))$ (the space of continuous measure-valued paths) describing the behaviour of the descendants of a single ancestor at $x$ at time $0$ (see Theorem II.7.3 of \cite{P2002}). A super-Brownian motion under $P^X_{X_0}$ may be constructed as the integral of a Poisson point process with intensity $\N_{X_0}(\cdot)=\int \N_x(\cdot)dX_0(x)$ (see \eqref{SBMPPPdecomp} below). In particular, if we write $X_t(\phi)=\int\phi(x)\,X_t(dx)$, then for $\phi\ge 0$,
\begin{equation}\label{PPPLT}
E^X_{X_0}(e^{-X_t(\phi)})=\exp\Bigl(-\int1-e^{-\nu_t(\phi)}d\N_{X_0}(\nu)\Bigr).
\end{equation}\\

Our next job is to describe $\lambda_0$ more carefully. We let
\begin{equation}\label{Fdef}
F(x)=-\log(P^X_{\delta_0}(X(1,x)=0))=\N_0(X(1,x)>0),
\end{equation}
where the last equality is a simple consequence of \eqref{PPPLT} with $\phi=\infty\delta_{x}$ and $X_0=\delta_0$ (see Proposition~3.3 of \cite{MMP17}).
Then $F$ is the unique positive symmetric $C^2$ solution to 
\begin{equation}\label{FODE}
\frac{F''}{2}(y)+\frac{y}{2}F'(y)+F(y)-\frac{F(y)^2}{2}=0,
\end{equation}
and
\begin{equation}\label{FBCS}
F'(0)=0,\quad \lim_{y\to\infty}y^2F(y)=0.
\end{equation}
(See (1.10),(1.12) of \cite{MMP17} and the discussion in Section~3 of the same reference.)  Let $Af(y)=\frac{f''(y)}{2}-\frac{yf'(y)}{2}$ be the generator of the Ornstein-Uhlenbeck process, $Y$, on the line.  For $\phi\in C([-\infty,\infty])$, the space of continuous functions on $\R$ with finite limits at $\pm \infty$, we let $A^\phi(f)=Af-\phi f$ be the generator of the Ornstein-Uhlenbeck process $Y^\phi$, now killed when $\int_0^t\phi(Y_s)\,ds$ exceeds an independent exponential mean one r.v.  If $m$ denotes the standard normal law on $\R$, the resolvent of $A^\phi$ is a Hilbert-Schmidt integral operator on the Hilbert space of square integrable functions with respect to $m$, $\cL^2(m)$. Therefore $A^\phi$ has a complete orthonormal system of eigenfunctions $\{\psi_n^\phi:n\ge 0\}$ with non-positive eigenvalues $\{-\lambda_n^\phi\}$ ordered so that $-\lambda_n^\phi$ decreases to $-\infty$.   The lead eigenvalue $-\lambda_0^\phi\le 0$ is simple and so 
has a unique normalized eigenfunction $\psi_0^\phi$.  See Theorem~\ref{thm_OU} below for this and related information. If we set $\phi=F$, then our eigenvalue $-\lambda_0$ is $-\lambda^F_0$ which is in $(-1,-1/2)$ by an elementary calculation in Proposition 3.4(b) of \cite{MMP17}, using the fact (Proposition 3.4(b) of \cite{MMP17}) that 
\begin{equation}\label{lambdahalfF}
\lambda_0^{F/2}=1/2.
\end{equation}
Here then is our main result.
\begin{theorem}\label{dimwp1} For any $X_0\in\cM_F(\R)\setminus\{0\}$ and $t>0$,
\begin{equation}
\text{dim}(BZ_t) = 2 - 2\lambda_0\in (0,1)\quad P_{X_0}^X-\text{a.s. and }\N_0-\text{a.e. on }\{X_t > 0 \}.
\end{equation}
\end{theorem}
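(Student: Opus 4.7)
The Poisson cluster decomposition underlying \eqref{PPPLT} writes $X$ under $P^X_{X_0}$ as a superposition of clusters $\{\nu^{(i)}\}$ governed by $\N_{X_0}$. Since $\N_x(X_t\neq 0) < \infty$, only finitely many clusters are nonzero at time $t$, and $\{x: X(t,x)>0\}=\bigcup_i \{x: \nu^{(i)}(t,x)>0\}$. The inclusion $BZ_t \subseteq \bigcup_i \partial\{\nu^{(i)}(t,\cdot)>0\}$ then reduces the upper bound on $\dim(BZ_t)$ to the corresponding bound $\N_x$-a.e. For the lower bound I would isolate the rightmost cluster: if $R_t := \sup(\mathrm{supp}\,X_t)$ is realized by cluster $\nu^*$, then for $\delta>0$ small enough $BZ_t$ and $\partial\{x: \nu^*(t,x)>0\}$ coincide on $(R_t-\delta,R_t)$, so it is enough to obtain an $\N_0$-a.e.\ \emph{local} lower bound of $2-2\lambda_0$ near the right edge of a single cluster.

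\textbf{Upper bound under $\N_0$.} A box-counting argument should give $\dim(BZ_t)\leq 2-2\lambda_0$ $\N_0$-a.e.\ on $\{X_t>0\}$: using \eqref{lambdahalfF} and the OU spectral gap, one controls the Lebesgue measure of $\{x: 0 < X(t,x) < \veps^{\lambda_0}\}$ by a power of $\veps$, which translates into a covering estimate for an $\veps$-neighbourhood of $BZ_t$. Much of this is already present in \cite{MMP17}; the novelty here is to make the estimate deterministic on $\{X_t>0\}$ rather than only with positive probability.

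\textbf{Lower bound via boundary local time.} The main new tool is the random measure $L_t$ of \cite{H2018} supported on $BZ_t$. Using the spectral expansion of the killed OU semigroup generated by $A^F$ (in particular the exponential decay rate $\lambda_0$ of the leading mode), one expects two-point moment bounds for the approximations $L^\veps_t$ of $L_t$ leading to a Frostman energy estimate
\begin{equation*}
\N_0\Bigl(\int\!\!\int |x-y|^{-(2-2\lambda_0-\eta)}\,L_t(dx)\,L_t(dy)\Bigr) < \infty, \quad \eta>0.
\end{equation*}
Frostman's lemma then gives $\dim(BZ_t)\geq 2-2\lambda_0-\eta$ on $\{L_t(\R)>0\}$, and $\eta\downarrow 0$ yields the lower bound there.

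\textbf{Main obstacle: $L_t(\R)>0$ on all of $\{X_t>0\}$.} The hardest step is to show $\{L_t(\R)>0\}=\{X_t>0\}$ $\N_0$-a.e.\ (the inclusion $\subseteq$ is immediate). Standard 0-1 laws fail because $BZ_t$ depends non-monotonically on the initial condition, so one cannot condition on a ``seed'' event to force $L_t>0$. The bypass is the announced analysis of $X(t,\cdot)$ near its upper edge $R_t$: the rescaled profile $X(t, R_t-\cdot)$, appropriately normalized, should be governed by the lead eigenfunction $\psi_0^F$ of $A^F$. A quantitative version of this limit statement would produce a strictly positive limit for $L^\veps_t((R_t-\delta, R_t))$ as $\veps\downarrow 0$, $\N_0$-a.e.\ on $\{X_t>0\}$, and hence $L_t(\R)>0$ there. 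Combined with the Frostman lower bound on $\{L_t(\R)>0\}$ and the covering upper bound, this yields $\dim(BZ_t)=2-2\lambda_0$ $\N_0$-a.e.\ on $\{X_t>0\}$, and the reduction in the first paragraph completes the proof under $P^X_{X_0}$.
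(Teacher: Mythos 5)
Your overall skeleton (energy method with $L_t$ for the lower bound, reduction via the Poisson cluster decomposition, and the recognition that the hard step is $\{L_t>0\}=\{X_t>0\}$) matches the paper's strategy, but several points need correcting and the crucial step is left as an unsubstantiated heuristic.

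First, a minor misreading: the upper bound $\dim(BZ_t)\le 2-2\lambda_0$ already holds $P^X_{X_0}$-a.s. in Theorem 1.3(a) of \cite{MMP17}; only the lower bound there is ``with positive probability.'' So no new box-counting argument is needed — the paper simply cites that bound and transfers it to $\N_0$ by the cluster decomposition. Likewise, the Frostman energy estimate \eqref{energyest} is not something to be proved here: it is Theorem A(c), taken from \cite{H2018}. The genuinely new content is Theorem~\ref{thm_01-law}.

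For that step, your proposal relies on an announced but unargued limit theorem: that the rescaled profile $X(t,U_t-\cdot)$ converges (in a quantitative sense) to something governed by $\psi_0^F$, which in turn would give a positive a.s.\ limit for $L^\lambda_t((U_t-\delta,U_t))$. That is a much stronger statement than anything in the paper, and you give no mechanism for why it would hold $\N_0$-a.e.\ on $\{X_t>0\}$ (note that the $L^\lambda_t\to L_t$ convergence of Theorem A(a) is only in measure, so one cannot pass a.s.\ limits along it without extra work). The paper's actual route is fundamentally different and avoids any such pointwise profile statement: it decomposes $X_{t-\delta_n}$ at the random level $\tau_n=\tau^{\delta_n}(t-\delta_n)$ using the historical process (\eqref{XRdef}–\eqref{histcondlaw}), then applies Theorem~\ref{THMW} to split $L_t$ over the right and left sub-populations. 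The right sub-population has mass exactly $\delta_n$ and by a second-moment bound (Lemma~\ref{lemma_Ltposhalfline}) puts positive boundary local time on $[\tau_n+3\sqrt{\delta_n},\infty)$ with conditional probability bounded below. The left sub-population is shown not to flood that region by combining the Dawson--Iscoe--Perkins hitting estimate (Theorem~\ref{th_DIP33}) with the first-moment edge bound of Proposition~\ref{prop_boundaryGrowthBd}; this gives a uniform lower bound on $P^X_{X_0}(L_t>0\mid\cF_{t-\delta_n})$ on events $\Lambda_K$ that exhaust $\{X_t>0\}$, and the bounded-martingale convergence theorem finishes. Proposition~\ref{prop_boundaryGrowthBd} in turn hinges on identifying $\lambda_0^G=1$ for the killing function $G=v^\infty_1$, not on $\psi_0^F$. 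In short: the missing piece in your proposal — a constructive reason why $L_t$ charges the right edge a.s.\ — is the heart of the proof, and the paper supplies it by a martingale/cluster/hitting-probability argument rather than the profile limit you suggest, which remains an open and probably harder problem.
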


In fact Theorem~1.3(a) in \cite{MMP17} already gives 
\begin{equation}\label{dimupperbound}
\text{dim}(BZ_t)\le 2-2\lambda_0\ P^X_{X_0}-\text{a.s.  and }\N_0-\text{a.e.}
\end{equation}
Although the above reference only considers $P^X_{X_0}$, the result for $\N_0$ then follows easily by the Poisson point process decomposition mentioned above (see (2.5) below), just as in the last six lines of the proof of Theorem~\ref{thm_01-law} at the end of Section~\ref{secthm01}.  Therefore it is the lower bound on $\text{dim}(BZ_t)$ that we must consider.  The lower bound on the dimension was attained with positive probability in Theorem~5.5 of \cite{MMP17} by first deriving a sufficient capacity condition for $BZ_t$ to intersect a given set, $A$, with positive probabiity (Theorem~5.2 of \cite{MMP17}) and then taking $A$ to be the range of an appropriate L\'evy process.  As was already noted, the authors were unable to use this approach to establish the lower bound a.s.  The standard approach to lower bounds on Hausdorff dimension is through the energy method.  That is, first construct a finite random measure or local time, $L_t$, supported by $BZ_t$ such that 
\begin{equation}\label{energyest}
E\Bigl(\iint|x-y|^{-\alpha}dL_t(x)dL_t(y)\Bigr)<\infty\quad\forall \ 0<\alpha<2-2\lambda_0.
\end{equation}
The energy method (see Theorem 4.27 of \cite{MP10}) would then imply 
\begin{equation}\label{dimlowerbound}
\text{dim}(BZ_t)\ge 2-2\lambda_0 \text{ a.s. on }\{L_t\neq 0\}.
\end{equation}
The existence of such a boundary local time was established in \cite{H2018}, confirming
a construction conjectured in Section~5 of \cite{MMP17}, which we briefly describe now.
Define a measure $L^\lambda_t \in \cM_F(\R)$ by
\begin{equation} \label{def_Llambda}
L^\lambda_t(\phi) = \int \phi(x) \, \lambda^{2\lambda_0} X(t,x) e^{-\lambda X(t,x)} \,dx
\end{equation}
for bounded Borel functions $\phi$. Note that as $\lambda$ gets large $L^\lambda_t$ becomes concentrated
on the set of points $x$ where\hfil\break
 $0<X(t,x)=O(1/\lambda)$. The normalization of $\lambda^{2\lambda_0}$ comes from the left tail behaviour of $X(t,x)$ in Theorem~1.2 of \cite{MMP17}. The following result is taken from \cite{H2018}, more specifically it is included in Theorems~1.1, 1.2, 1.3 and 1.5, and Proposition~1.6 of \cite{H2018}.
\medskip

\textbf{Theorem A.} \emph{
(a) There is a finite atomless random measure, $L_t$, on the line such that under $\N_0$ or $P^X_{X_0}$, 
\[L^\lambda_t \to L_t  \text{ in measure in the metric space $\cM_F(\R)$ as }\lambda\to\infty. \]
Moreover $L_t$ is supported on $BZ_t$ a.s.\\
(b) There is a positive constant $C_A$  such that 
for any Borel $\phi:\R\to[0,\infty)$,
\begin{equation}\label{Lmeanmeasure}\int L_t(\phi)d\N_0=C_At^{-\lambda_0}\int\phi(\sqrt t z) \psi_0^F(z)dm(z).\end{equation}
(c) \eqref{energyest} holds under both $\N_0$ and $P^X_{X_0}$. \\
(d) There is a constant $C_B$ such that 
\begin{equation}\label{Lsecondmoment}\int L_t(1)^2 d\N_0\le C_Bt^{1-2\lambda_0}.
\end{equation}}

Let $S(X_t)=\overline{\{x:X(t,x)>0\}}$ be the closed support of $X_t$ and define $U_t=\sup(S(X_t))$ to be the upper most point of the support. 
It now follows from Theorem A that \eqref{dimlowerbound} holds under both $P^X_{X_0}$ and $\N_0$ (for the latter one can work under the probability $\N_0(\cdot|X_t\neq0)$). And so Theorem~\ref{dimwp1} is 
immediate from \eqref{dimupperbound} and the following:
\begin{theorem} \label{thm_01-law}
Under the measures $\N_0$ and $P^X_{X_0}$, $L_t > 0$ almost surely on $\{X_t > 0 \}$. In fact, almost surely on $\{X_t > 0\}$, $L_t((U_t - \delta, U_t)) > 0 $ for all $\delta > 0$.
\end{theorem}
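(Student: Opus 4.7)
The plan is to work primarily under $\N_0$ and prove the stronger statement $L_t((U_t-\delta,U_t))>0$ $\N_0$-a.e.\ on $\{X_t\ne 0\}$ for every $\delta>0$. The $P^X_{X_0}$ version should then follow from the Poisson point process decomposition: under $P^X_{X_0}$, $X$ is a Poisson superposition of $\N_y$-clusters, and the cluster attaining the global edge $U_t$ is itself an $\N_{y^\ast}$-copy to which (after translation) the $\N_0$-statement applies, while on the cluster-support complement near $U_t$ all other clusters are absent.

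\textbf{Step 1 (positive probability).} First I would show $\N_0(L_t((U_t-\delta,U_t))>0)>0$ for every $\delta>0$. By Theorem~A(b), the mean measure of $L_t$ under $\N_0$ is absolutely continuous with a strictly positive density proportional to $\psi_0^F(\cdot/\sqrt t)$ (the normalized lead eigenfunction is positive by the standard one-dimensional Perron--Frobenius argument). Combined with the second-moment bound of Theorem~A(d), the Paley--Zygmund inequality gives
\[
\N_0\bigl(L_t(I)>0\bigr) \;\ge\; \frac{\bigl(\int L_t(I)\,d\N_0\bigr)^2}{\int L_t(I)^2\,d\N_0} \;>\; 0
\]
for every open bounded interval $I$. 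To replace $I$ by the random window $(U_t-\delta,U_t)$ one argues that $U_t$ has a continuous distribution under $\N_0(\,\cdot\,,X_t\ne 0)$ (by scaling and absolute continuity properties of the SBM support), and then combines the deterministic-interval statement with the fact that $X(t,\cdot)$ is positive just to the left of $U_t$.

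\textbf{Step 2 (zero-one upgrade).} The harder part is promoting positive probability to full probability. I would use the Markov property at $s\in(0,t)$ to decompose $X_t$ as a Poisson superposition of independent clusters $\{X^y\}$ emanating from $\mathrm{supp}(X_s)$, with upper edges $U_t^y$; almost surely a unique top cluster $y^\ast=y^\ast(s)$ attains $U_t$. The crucial observation is that in a sufficiently narrow one-sided neighborhood of $U_t$ the contributions from the non-top clusters are negligible compared to that of $y^\ast$, which, thanks to the scaling of $\N_0$, allows one to apply Step~1 to the top cluster alone and obtain a lower bound on the conditional probability of $\{L_t((U_t-\delta,U_t))>0\}$ given $\cF_s$ that is uniform in $s$. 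Letting $s\uparrow t$ along a sequence $s_n$ produces a proliferation of independent ``near-top'' cluster trials across scales $\sqrt{t-s_n}$; a Borel--Cantelli-style argument applied to these trials then upgrades the positive constant $p_\delta>0$ from Step~1 to probability one.

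\textbf{Main obstacle.} The central technical difficulty is that $L_t$ is not additive across the cluster decomposition --- the $e^{-\lambda X}$ factor in \eqref{def_Llambda} couples the clusters --- and the window $(U_t-\delta,U_t)$ is random and depends on the entire configuration. Moreover, since $L_t$ and $U_t$ are both $\sigma(X_t)$-measurable, there is no standard germ-field 0-1 law at time $t$ to appeal to, so the 0-1 law must be manufactured using the spatial structure at the edge of the support together with the Markov decomposition across scales $s_n\uparrow t$. Verifying that the non-top clusters really are negligible in a one-sided neighborhood of $U_t$ (so that the top-cluster lower bound transfers to the full $L_t$), and controlling the interaction between the random edge and the limiting definition of $L_t$ as $\lambda\to\infty$, are likely to constitute the bulk of the work.
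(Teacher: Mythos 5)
Your proposal captures the right overall architecture --- split near the upper edge at times $s_n\uparrow t$, prove a uniform positive lower bound on the conditional probability, then upgrade to almost-sure --- and you correctly identify the central obstacle (non-additivity of $L_t$ across clusters plus the randomness of the window $(U_t-\delta,U_t)$). However, you flag that obstacle rather than resolve it, and a couple of the concrete steps you propose would not work as stated.

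First, the ``Borel--Cantelli over independent near-top trials across scales $\sqrt{t-s_n}$'' mechanism is not available: the trial at scale $s_n$ is a functional of $X_{s_n}$, and these $\sigma$-fields are nested, not independent. The paper replaces this with a \emph{reverse martingale} argument: $P^X_{X_0}(L_t>0\,|\,\cF_{t-\delta_n})\to 1_{\{L_t>0\}}$ a.s., and one only needs a \emph{uniform} lower bound $\geq p_K>0$ along a subsequence on events $\Lambda_K$ that eventually fill up $\{X_t>0\}$. That produces a.s.\ positivity without any independence across scales.

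Second, ``the top cluster is an $\N_{y^*}$-copy while all other clusters are absent near $U_t$'' is exactly the claim that needs proof, not an observation. The interfering clusters can and do place mass arbitrarily close to $U_t$. The paper resolves this with two concrete ingredients you are missing: (i) Theorem~\ref{THMW}, which gives the exact identity $dL_t=\sum_i 1(X^{j\neq i}(t,\cdot)=0)\,dL^i_t$ for the boundary local time of a sum of independent SBMs, so one can lower-bound $L_t$ by the top piece's local time times an indicator that the other pieces vanish there; and (ii) Proposition~\ref{prop_boundaryGrowthBd} plus Theorem~\ref{th_DIP33}(ii), which control the mass of $X_{t-\delta_n}$ just to the left of the splitting point and hence yield the $A_{K,n}$ events on which the left population provably avoids $[\tau_n+3\sqrt{\delta_n},\infty)$ with probability $\geq e^{-cK}$. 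Without these, the proposal's ``negligibility of non-top clusters'' is unsubstantiated.

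Third, the choice of splitting point matters. The paper does not split at the top cluster but at the $\delta_n$-quantile $\tau_n=\tau^{\delta_n}(t-\delta_n)$, so that the right piece has mass exactly $\delta_n$ and therefore a Poisson($2$) number of ancestral clusters, making the restriction to $\{N_R=1\}$ cost only a universal factor $2e^{-2}$ and putting $L_{\delta_n}$ at exactly the scale ($[3\sqrt{\delta_n},\infty)$) where Lemma~\ref{lemma_Ltposhalfline} applies. Splitting at the literal top cluster loses this normalization.

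Finally, your Step 1 asserts $\N_0(L_t((U_t-\delta,U_t))>0)>0$ directly via Paley--Zygmund on the random window; the paper proves the half-line version $\N_0(L_\delta([k\sqrt\delta,\infty))>0\,|\,X_\delta\neq 0)\ge c(k)$ and only passes to the random window $(U_t-\delta,U_t)$ at the very end, via Lemma~\ref{lemma_limsupbd} (which requires the continuity of $s\mapsto X_s$ to show $\tau_n>U_t-\delta$ eventually) and a Hunt-type $\limsup$ lemma. The direct attack on the random window in Step 1 is harder than you suggest and is best deferred as the paper does.

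In short: right skeleton, wrong upgrade mechanism, and the genuinely hard technical ingredients (local-time decomposition, quantile split, hitting-estimate control of the left mass, martingale convergence) are named as ``the bulk of the work'' but not supplied.
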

This theorem shows that as long as $X_t$ has not gone extinct, the part of $BZ_t$ at its upper edge will have positive $L_t$ measure, and, in particular, $L_t$ itself is not equal to the zero measure. It is natural to consider a local version of the above and show that $L_t$ will charge any open interval which contains points in $BZ_t$.  
This clearly fails (note from Theorem~A that $L_t$ is atomless) if $X_t(\cdot)$ has isolated zeros, which
clearly would be in $BZ_t$.  
An elementary argument shows that $\partial S(X_t) \subseteq BZ_t$ and the former set clearly will not
contain isolated zeros of $X_t(\cdot)$.  Given that the existence of isolated zeros of $X_t(\cdot)$ remains
unresolved (we conjecture that they do not exist), here then is our local version of Theorem~\ref{thm_01-law}:

\begin{theorem} \label{thm_01_law_local}
For $t>0$, $P_{X_0}^X$ and $\N_0$ a.s.,  for any $a<b$, $(a,b) \cap \partial S(X_t) \neq \emptyset \text{ implies } L_t((a,b)) > 0$.
\end{theorem}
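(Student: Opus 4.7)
The strategy is to reduce Theorem~\ref{thm_01_law_local} to Theorem~\ref{thm_01-law} in two stages: (i) a structural reduction from ``$(a,b)$ meets $\partial S(X_t)$'' to the existence of a right edge of $S(X_t)$ in $(a,b)$, then (ii) a cluster-isolation argument, using the Poisson cluster representation of $X_t$ at time $t-s$, that realises this right edge as the upper edge of a unique surviving cluster to which Theorem~\ref{thm_01-law} applies.

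\textbf{Phase 1 (structural reduction).} Fix $x_0\in(a,b)\cap\partial S(X_t)$. Continuity of $X(t,\cdot)$ makes $S(X_t)^c$ open, and $x_0\in S(X_t)$ is a one-sided limit point of $S(X_t)^c$; by the reflection symmetry of the statement we may assume this is a right-hand limit. Then either $x_0$ is itself a \emph{right edge} of $S(X_t)$ (meaning $(x_0,x_0+\eta)\subset S(X_t)^c$ for some $\eta>0$), or the left endpoints $\alpha_n$ of the maximal $S(X_t)^c$-subintervals accumulating at $x_0$ from the right are right edges lying in $(a,b)$ for $n$ large. It therefore suffices to prove: for every right edge $r\in(a,b)$ of $S(X_t)$, $L_t((r-\delta,r))>0$ for some $\delta<r-a$.

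\textbf{Phase 2 (cluster isolation and application of Theorem~\ref{thm_01-law}).} Fix $s\in(0,t)$. The Markov property at $t-s$ together with \eqref{PPPLT} and the canonical measure yields, conditionally on $X_{t-s}$,
\[
X_t=\int\nu_s\,d\Xi(y,\nu),
\]
where $\Xi$ is a Poisson point process on $\R\times C([0,s],\cM_F(\R))$ with intensity $X_{t-s}(dy)\,\N_y(d\nu)$. The surviving atoms $\{\nu^i:\nu^i_s\not\equiv 0\}$ form a thinned PPP of total intensity $(2/s)X_{t-s}(\R)<\infty$, so are a.s.\ finite in number; their ancestors $y_i$ are a.s.\ distinct, and their upper edges $U^{\nu^i}:=\sup S(\nu^i_s)$ are a.s.\ pairwise distinct, since $U^\nu$ has an atomless conditional law under $\N_y$ by Brownian scaling. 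A right edge $r$ of $X_t$ must equal the largest of these upper edges, realised by a unique cluster $\nu^{i_0}$, while the second-largest $U^*:=\max_{j\neq i_0}U^{\nu^j}$ satisfies $U^*<r$ a.s. On $(U^*,r)$ only $\nu^{i_0}$ contributes, so $X(t,\cdot)\equiv\nu^{i_0}(s,\cdot)$ there, and the defining limit \eqref{def_Llambda} of $L_t$ being local in $x$ gives $L_t\restrict{(r-\eta,r)} = L^{\nu^{i_0}}_s\restrict{(r-\eta,r)}$ for $\eta<r-U^*$. Applying Theorem~\ref{thm_01-law} to $\nu^{i_0}$ under $\N_{y_{i_0}}$ yields $L^{\nu^{i_0}}_s((r-\eta,r))>0$ for every $\eta>0$; choosing $\eta$ also less than $r-a$ concludes $L_t((a,b))>0$.

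\textbf{Main obstacle.} The technical crux is passing the identity $L^\lambda_t(\phi)=L^{\lambda,\nu^{i_0}}_s(\phi)$ (which holds for each $\lambda$ whenever $\phi$ is supported in $(U^*,r)$, since then $X(t,\cdot)=\nu^{i_0}(s,\cdot)$ on $\mathrm{supp}(\phi)$) through the limit $\lambda\to\infty$, given that Theorem~A(a) only asserts convergence in $\cM_F(\R)$ rather than pointwise in $x$. A subsequence argument over a countable determining family of continuous $\phi$'s supported in $(U^*,r)$, combined with the atomlessness of $L_t$ from Theorem~A(a), should give the local identification of measures; the randomness of $U^*$ introduces a measurability complication handled by exhausting with rationals. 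Secondary points---the atomlessness of the $\N_y$-law of $U^\nu$ (provable by scaling and standard super-BM tail estimates on the upper edge) and the measurable selection of the dominant cluster $\nu^{i_0}$---are expected to be routine.
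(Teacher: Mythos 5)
Your Phase~1 structural reduction is sound: every boundary point of $S(X_t)$ in $(a,b)$ is either a one-sided edge of $S(X_t)$, or an accumulation point of such edges lying in $(a,b)$, so it suffices to treat right edges (and, by symmetry, left edges). Phase~2, however, has a fatal gap. You assert that a right edge $r$ of $S(X_t)$ ``must equal the largest of the upper edges $U^{\nu^i}$.'' This holds only when $r=U_t$ is the \emph{global} rightmost point of the support. For an interior right edge $r<U_t$ there is at least one cluster whose upper edge exceeds $r$, so $U^*=\max_{j\neq i_0}U^{\nu^j}\geq U_t>r$ and your isolation interval $(U^*,r)$ is empty. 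Moreover a cluster $\nu^j$ with $U^{\nu^j}>r$ can have a gap in its (generally disconnected) support covering $(r,r+\eta)$ while still carrying mass in $(r-\eta,r)$, so one cannot isolate the dominant cluster on a left neighborhood of an interior right edge by ranking cluster extrema. The case $r=U_t$ that your argument does cover is already the content of Theorem~\ref{thm_01-law}, so Phase~2 does not supply the genuinely local statement.

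The paper's proof avoids exactly this obstruction by a different decomposition: instead of splitting $X_t$ into clusters rooted at a single fixed earlier time $t-s$ and ordering their upper edges, it splits the population at time $t-\delta_n$ \emph{spatially} at a rational $q$ chosen inside the middle third of a hole $I_m\subset S(X_t)^c$ near $x_0$ (Lemma~\ref{lemma_topsupport}), using the historical process \eqref{histdecomp}. The modulus of continuity \eqref{Hmod} then forces, for $\delta_n$ small, all descendants of the right piece to remain to the right of the hole's left endpoint $a_m$, and all descendants of the left piece to remain to the left of the hole's right endpoint $b_m$; this gives the spatial isolation you were seeking, uniformly over the ancestors, with no ordering of cluster extrema and no use of finiteness of the cluster count. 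The local identification of $L_t$ is handled not by a locality argument for the limit \eqref{def_Llambda} (a gap you rightly flag as nontrivial) but by the explicit additivity formula of Theorem~\ref{THMW} for the boundary local time of a sum of independent super-Brownian motions, after which Theorem~\ref{thm_01-law} is applied conditionally to the left piece.
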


Evidently we do not know whether or not $BZ_t\setminus \partial S(X_t)$ is non-empty; isolated zeros are not the only possible points in this set--see Lemma~\ref{lemma_topsupport} below.
Nonetheless 
we make the following conjecture:
\begin{conjecture}\label{suppconj} $L_t$ is supported on $\partial S(X_t)$ and so dim$(\partial S(X_t))=2-2\lambda_0$ on $\{X_t\neq 0\}$ $P_{X_0}^X$-a.s. and $\N_0$-a.e.,
\end{conjecture}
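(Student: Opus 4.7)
The plan is to derive Theorem~\ref{thm_01_law_local} from Theorem~\ref{thm_01-law} via a Poisson cluster decomposition at time $t - s$ for small $s > 0$. By a countable union it suffices to fix a pair of rationals $a < b$ and show $L_t((a,b)) > 0$ almost surely on $E_{a,b} := \{(a,b) \cap \partial S(X_t) \neq \emptyset\}$. Since $S(X_t) = \overline{\{X(t,\cdot) > 0\}}$ and its complement in $\R$ is a union of maximal open intervals, the one-sided edges of $S(X_t)$ (finite endpoints of these intervals) are dense in $\partial S(X_t)$. Hence on $E_{a,b}$ we may pick such an edge lying in $(a,b)$, and by the spatial reflection symmetry of \eqref{SPDE} and of the construction of $L_t$, we may assume the edge is a right-edge: a point $x^* \in (a,b) \cap S(X_t)$ with $(x^*, x^* + \eta) \cap S(X_t) = \emptyset$ for some $\eta > 0$.

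Fix $s \in (0,t)$ and condition on $\cF_{t-s}$. By the Markov property and the Poisson cluster representation, $X_t = \sum_{i \in I_s} \nu^{(i)}_s$, where the $\nu^{(i)}$ are the atoms surviving to time $s$ of a Poisson point process on $C([0,s], \cM_F(\R))$ with intensity $\int \N_y(\cdot)\, X_{t-s}(dy)$. Write $y^{(i)}$ for the starting position of $\nu^{(i)}$ and $U^{(i)} := \sup S(\nu^{(i)}_s)$. Using standard tail estimates for $\sup S(\nu_s)$ under $\N_y$ (which decay exponentially in $r^2/s$), integrated against $X_{t-s}(dy)$, on $E_{a,b}$ we may pick a random $s = s(\omega) > 0$ small enough that each cluster has support contained in a $\gamma$-neighborhood of $y^{(i)}$, where $\gamma < \tfrac{1}{4}\min(\eta, x^*-a, b-x^*)$. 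Since no cluster has mass in the gap $(x^*, x^* + \eta)$, each cluster is either a \emph{below} cluster ($U^{(i)} \le x^*$) or an \emph{above} cluster ($\inf S(\nu^{(i)}_s) \ge x^* + \eta$). Since $x^* \in S(X_t)$ is approached from below by $\{X(t,\cdot) > 0\}$, some below-cluster $i^*$ satisfies $U^{(i^*)} = x^*$; by the concentration, $i^*$ is isolated above in the sense that every other cluster $j$ with support near $x^*$ obeys $U^{(j)} < x^* - 2\gamma$.

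On the neighborhood $N := (x^* - \gamma, x^* + \gamma)$ we thus have $X(t, \cdot) = \nu^{(i^*)}_s(\cdot)$. The measures $L^\lambda_t$ and the analogously-defined cluster measure $(L^\lambda)^{(i^*)}_s$ therefore agree on $N$ for each $\lambda$, and passing to the $\lambda \to \infty$ limit (subsequentially a.s.\ by Theorem~A(a)) gives $L_t|_N = L^{(i^*)}|_N$, where $L^{(i^*)}$ is the cluster's boundary local time from Theorem~A under $\N_{y^{(i^*)}}$ at horizon $s$. Theorem~\ref{thm_01-law} applied to this cluster (via translation invariance of $\N_0$) then yields $L^{(i^*)}((U^{(i^*)} - \delta, U^{(i^*)})) > 0$ for all $\delta > 0$ on $\{\nu^{(i^*)}_s \neq 0\}$, an event which contains $E_{a,b}$ as $U^{(i^*)} = x^* \in S(X_t)$. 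Taking $\delta < \gamma$, we have $(U^{(i^*)} - \delta, U^{(i^*)}) \subset N \cap (a,b)$, hence $L_t((a,b)) \ge L^{(i^*)}((U^{(i^*)} - \delta, U^{(i^*)})) > 0$ on $E_{a,b}$.

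The hard part will be the isolation step: since $\N_y$ is only $\sigma$-finite and the expected number of surviving clusters grows like $1/s$, controlling all clusters' spatial extent uniformly while the tolerance scale $\gamma(\omega)$ itself depends on the realization requires careful exploitation of the fast (Gaussian-type) tail of $\sup S(\nu_s)$, combined with a Borel--Cantelli selection of $s(\omega)$. A secondary delicacy is that Theorem~A(a) provides only convergence in measure on $\cM_F(\R)$, so identifying $L_t|_N = L^{(i^*)}|_N$ on the random open set $N$ should be handled via a Portmanteau-type argument together with the atomlessness of $L_t$.
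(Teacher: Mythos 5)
You have proved the wrong statement. The statement given is Conjecture~\ref{suppconj}, which asserts the inclusion $\text{supp}(L_t)\subseteq \partial S(X_t)$, i.e., that $L_t$ does not charge any part of $BZ_t\setminus\partial S(X_t)$ (such as isolated zeros of $X(t,\cdot)$, whose existence is itself unresolved). This is genuinely open: the paper does \emph{not} prove it, and the phrase ``the last conclusion being immediate from the first'' merely explains how the dimension statement would follow \emph{if} the first part held. Your proposal instead attacks Theorem~\ref{thm_01_law_local}, which yields the \emph{converse} inclusion $\partial S(X_t)\subseteq \text{supp}(L_t)$. Nothing in your argument touches the question of whether $L_t$ can charge points of $BZ_t$ that are interior to $S(X_t)$, so the Conjecture is simply not addressed.

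Even viewed purely as a proof of Theorem~\ref{thm_01_law_local}, there is a concrete gap in the isolation step, and it is more severe than the technical cluster-counting issue you flag at the end. You write that ``by the concentration, $i^*$ is isolated above in the sense that every other cluster $j$ with support near $x^*$ obeys $U^{(j)}<x^*-2\gamma$.'' But concentration only confines each cluster within radius $\gamma$ of its own seed $y^{(j)}$; it does nothing to prevent a second cluster $j$ from having $y^{(j)}$ arbitrarily close to $y^{(i^*)}$, hence from having $U^{(j)}$ arbitrarily close to $x^*$. In that case $X(t,\cdot)\neq \nu^{(i^*)}_s(\cdot)$ on any neighbourhood of $x^*$, and Theorem~\ref{THMW} shows the indicator $1(\sum_{j\neq i^*}X^j(t,x)=0)$ can kill exactly the boundary local time you are trying to harvest. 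The paper's proof of Theorem~\ref{thm_01_law_local} avoids this problem entirely by splitting $X_{t-\delta_n}$ at a deterministic rational $q$ chosen in the middle third of a hole $I_m$ of $X_t$, and then using the historical modulus of continuity to force $\hat{X}^R_{\delta_n}$ and $\hat{X}^L_{\delta_n}$ to have supports in the disjoint half-lines $(a_m,\infty)$ and $(-\infty,a_m]$; no cluster isolation is needed. If you wish to pursue your route you would need an additional argument ruling out multiple clusters charging an arbitrarily small neighbourhood of the random extremal point $x^*$, which seems hard without reproducing exactly the hole-plus-modulus step from the paper.
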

the last conclusion being immediate from the first by \eqref{dimupperbound}, Theorem~\ref{thm_01-law}, Theorem~A(a,b) and
the energy method described above.\\

\begin{corollary}
For $t>0$, $P_{X_0}^X$ and $\N_0$ a.s., for any $a<b$, $(a,b) \cap \partial S(X_t) \neq \emptyset \text{ implies } \text{dim}(BZ_t \cap (a,b)) = 2-2\lambda_0$.
\end{corollary}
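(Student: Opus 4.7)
The plan is to deduce this corollary as an essentially immediate consequence of Theorem~\ref{thm_01_law_local} and Theorem~A via the energy method. For the upper bound, $\dim(BZ_t\cap(a,b))\le\dim(BZ_t)\le 2-2\lambda_0$ is immediate from \eqref{dimupperbound} on a single full-probability event valid for all $a<b$.

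For the matching lower bound, I fix a countable sequence $\alpha_n\uparrow 2-2\lambda_0$. By Theorem~A(c), for each $n$ the double integral $\iint|x-y|^{-\alpha_n}\,dL_t(x)\,dL_t(y)$ has finite expectation under both $\N_0$ and $P^X_{X_0}$, and so is finite almost surely. On the full-probability event where this holds for every $n$, where \eqref{dimupperbound} holds, and where the conclusion of Theorem~\ref{thm_01_law_local} holds, fix any $a<b$ with $(a,b)\cap\partial S(X_t)\neq\emptyset$. Then $L_t((a,b))>0$, so $L_t|_{(a,b)}$ is a nontrivial finite positive measure whose $\alpha_n$-energy is bounded by the global $\alpha_n$-energy of $L_t$ and hence is finite. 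Since $L_t$ is supported on $BZ_t$ by Theorem~A(a) and $L_t$ is atomless, $L_t|_{(a,b)}$ charges $BZ_t\cap(a,b)$. The energy method (Theorem~4.27 of \cite{MP10}) then yields $\dim(BZ_t\cap(a,b))\ge\alpha_n$; letting $n\to\infty$ gives the required lower bound of $2-2\lambda_0$.

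There is no substantive new obstacle here: the corollary reduces mechanically to Theorem~\ref{thm_01_law_local} together with the global energy estimate in Theorem~A(c). The only piece of bookkeeping worth spelling out is that one intersects countably many a.s.\ events (one per $\alpha_n$, plus the events of \eqref{dimupperbound} and Theorem~\ref{thm_01_law_local}) so that a single good event delivers the claimed dimension for \emph{all} subintervals $(a,b)$ meeting $\partial S(X_t)$ simultaneously.
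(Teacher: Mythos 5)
Your argument is correct and is essentially the same as the paper's: both apply the energy method (via \eqref{energyest}) to the nontrivial restricted measure $L_t|_{(a,b)}$, whose positivity comes from Theorem~\ref{thm_01_law_local}, and both get the upper bound from \eqref{dimupperbound}. The only cosmetic difference is how one arranges a single full-probability event: you intersect over a countable sequence $\alpha_n\uparrow 2-2\lambda_0$ and use that Theorem~\ref{thm_01_law_local} already covers all $a<b$ simultaneously, while the paper reduces to rational $a,b$; the remark about $L_t$ being atomless is unnecessary, since $L_t|_{(a,b)}(BZ_t\cap(a,b))=L_t((a,b))>0$ already follows from $L_t$ being supported on $BZ_t$.
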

\begin{proof} By considering rational values we may fix $a$ and $b$ and work under either $P^X_{X_0}$ or $\N_0(\cdot|X_t\neq 0)$.  Assume $(a,b)\cap\partial S(X_t)\neq\emptyset$.   In view of \eqref{energyest} we may apply the energy method to $L_t|_{(a,b)}$, which is a.s. non-zero by Theorem~\ref{thm_01_law_local}, and so conclude that dim$(BZ_t\cap(a,b))\ge 2-2\lambda_0$ a.s.  on $\{(a,b)\cap \partial (S(X_t))\neq 0\}$.  The corresponding upper bound is immediate from \eqref{dimupperbound}.
\end{proof}

We comment briefly on the numerical approximation of $\lambda_0$ carried out by Peiyuan Zhu in \cite{Z17}.
One first needs to numerically approximate $F$ using an an ODE solver and the ``shooting method'' to find  the minimal
value of $c$ so that $F_c(0)=c$, $F_c'(0)=0$ and $F_c$ satisfying \eqref{FODE} remains non-negative. It is known that $F_c=F$ (see, e.g., \cite{BPT86}). One then approximates this numerically generated $F$ by a linear combination of Gaussians $\hat F$ (with varying means and variances).   We estimate $-\lambda_0^F$ by $-\lambda_0^{\hat F}$, the lead eigenvalue of the Ornstein-Uhlenbeck operator with $\hat F$-killing on
a large interval $[0,K]$ with Neumann boundary conditions. $K$ must be taken sufficiently large to
approximate the corresponding operator on $[0,\infty)$. The final step is then to use CHEBFUN software to estimate $\lambda_0^{\hat F}$. One could also obtain $\hat F$ by interpolating between the numerically generated grid points using Chebychev polynomials--the results agree to the given accuracy.  We have some faith in the resulting approximation
of $\lambda_0^F\approx .8882$ because  if we replace $F$ with $F/2$, the same method leads to $\lambda_0^{F/2}\approx.5000$. This compares well with the exact (known) value in \eqref{lambdahalfF}. 
\\

The proof of Theorem~\ref{thm_01-law} includes some input from the semilinear pde's associated
with super-Brownian motion (such as \eqref{slpde} below) which are carried out in Section~\ref{secpde}. This is then used in Section~\ref{secthm01}, to study $X_t(dx)$ near the upper end of its support, $U_t$.
For $\epsilon > 0$, define \begin{equation} \label{def_taueps}
\tau^\epsilon =\tau^\epsilon(t)= \inf \{ x \in \R : X_t([x, \infty)) < \epsilon\}.
\end{equation}
In particular, if $X_t(1) < \epsilon$, then $\tau^\epsilon = -\infty$. The following result gives 
some insight into the behaviour of $X_t$ near the upper edge of its support and so the following first moment bound, which is proved in Section~\ref{secthm01}, may be of independent interest.

\begin{proposition} \label{prop_boundaryGrowthBd}
There is a non-increasing function, $c_{\ref{prop_boundaryGrowthBd}}(t)$, such that for all $t,\epsilon>0$ and $u> 0$:\hfil\break
(a) For any $X_0\in \cM_F(\R)$, $E^X_{X_0}\left(\int_{\tau^\eps(t)-u}^\infty X_t(x)dx\right)\le c_{\ref{prop_boundaryGrowthBd}}(t)X_0(1)(u^2\vee\eps)$.

(b) 
$\N_0\left( \int_{\tau^\epsilon(t) - u}^\infty X_t(dx) \right) 
\leq c_{\ref{prop_boundaryGrowthBd}}(t)(u^2 \vee \epsilon)$.
\end{proposition}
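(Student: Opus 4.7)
The plan is to reduce to a first-moment estimate on the mass of $X_t$ in the thin strip $[\tau^\epsilon - u, \tau^\epsilon)$, and then to bound this via a Palm-type decomposition for SBM together with a sharp tail bound for the resulting spine immigration.

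Since $X_t$ is absolutely continuous, the map $y \mapsto X_t([y, \infty))$ is continuous, and on $\{X_t(1) \ge \epsilon\}$ we have $X_t([\tau^\epsilon, \infty)) = \epsilon$ exactly; on the complementary event $\tau^\epsilon = -\infty$ and the full integral is $X_t(1) < \epsilon$. In either case
$$\int_{\tau^\epsilon - u}^\infty X_t(dx) \leq \epsilon + X_t([\tau^\epsilon - u, \tau^\epsilon)),$$
so it suffices to bound $E^X_{X_0}[X_t([\tau^\epsilon - u, \tau^\epsilon))]$ by $c(t) X_0(1)(u^2 \vee \epsilon)$ and the analogous $\N_0$-integral by $c(t)(u^2 \vee \epsilon)$. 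Using the continuity-based equivalences $\{y < \tau^\epsilon\} = \{X_t([y, \infty)) > \epsilon\}$ and $\{y \geq \tau^\epsilon - u\} = \{X_t([y + u, \infty)) \leq \epsilon\}$, Fubini gives
$$E[X_t([\tau^\epsilon - u, \tau^\epsilon))] \leq \int_{\R} E\bigl[X(t, y)\, 1_{X_t([y+u, \infty)) \leq \epsilon}\bigr]\, dy.$$

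Next I would apply the Palm-Campbell formula for SBM arising from the Poisson-cluster decomposition: for bounded measurable $G$,
$$E^X_{X_0}[X(t, y)\, G(X_t)] = \int X_0(dz)\, p_t(z, y)\, \widehat{E}^{t, y, z}[G(X_t + \xi_t)],$$
where under $\widehat{E}^{t, y, z}$, $\xi$ is a spine-immigration process driven by a Brownian bridge from $z$ to $y$ over $[0, t]$, independent of an ambient copy $X$ of the original SBM. Taking $G = 1_{X_t([y+u, \infty)) \leq \epsilon}$, using $1_{A + B \leq \epsilon} \leq 1_{B \leq \epsilon}$ for $A, B \geq 0$, and then invoking translation invariance of the spine law along with the substitution $y' = y - z$, the task reduces to proving
$$I(t, u, \epsilon) := \int_{\R} p_t(y')\, \widehat{P}^{t, y', 0}\bigl(\xi_t([y' + u, \infty)) \leq \epsilon\bigr)\, dy' \ \leq \ c(t)\,(u^2 \vee \epsilon).$$
Part (b) is the specialisation to $X_0 = \delta_0$ without the ambient independent copy, and yields the $\N_0$ bound with the same $I(t, u, \epsilon)$.

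The core estimate is this bound on $I(t, u, \epsilon)$. Intuitively, under $\widehat{P}^{t, y', 0}$ the spine ends at $y'$, so mass is pumped into $[y' + u, \infty)$ whenever the bridge or one of its offspring clusters strays at least $u$ to the right of $y'$. Two natural routes present themselves: a Paley-Zygmund argument based on computing the first two moments of $\xi_t([y'+u, \infty))$ (mean via Gaussian integrals along the bridge, second moment via the SBM covariance identity with immigration), or an exponential-Markov bound $P(\xi_t(\cdot) \leq \epsilon) \leq e^{\lambda \epsilon} E[e^{-\lambda \xi_t(\cdot)}]$ combined with the log-Laplace equation $\partial_t V = \tfrac12 V'' - \tfrac12 V^2$ with data $\lambda 1_{[y+u,\infty)}$. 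Either route should give $u^2$ scaling for large $u$ (Brownian spreading) and an $\epsilon$ floor for small $u$ (reflecting the left-tail behaviour of $X(t,\cdot)$ quantified in Theorem~1.2 of \cite{MMP17}). The hard part will be making this probability bound sharp enough in both regimes simultaneously: it must produce genuinely quadratic behaviour in $u$ (not merely $u^{2-\eta}$) while also capturing the $\epsilon$ floor without loss, and I expect the semilinear-PDE analysis of Section~\ref{secpde} to furnish the necessary lower bounds on $V_\lambda$ at the correct spatial scale.
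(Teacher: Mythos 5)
Your reduction is on the right track and essentially parallels the paper's Lemma~\ref{lemma_boundaryGrowthFK}: split off the $\epsilon$ tail using $X_t([\tau^\epsilon,\infty))=\epsilon$, pass to the first moment of mass in the thin strip, apply the Palm/Campbell formula (Theorem~4.1.3 of \cite{DP} for $\N_0$, 4.1.1 for $P^X_{X_0}$), and then convert the indicator into an exponential via what you call the exponential-Markov bound, i.e.\ $1(A<\epsilon)\le e\,e^{-\epsilon^{-1}A}$. After the Palm step this yields precisely the Feynman--Kac expression $eE^B_0\bigl(\exp\{-\int_0^t v^{\epsilon^{-1}}_s(B_s+u)\,ds\}\bigr)$, with $v^\lambda$ the log-Laplace solution from Section~\ref{secpde}. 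Up to this point you and the paper agree.

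The genuine gap is exactly where you stop and say ``the hard part will be making this probability bound sharp enough in both regimes simultaneously.'' That hard part is the entire substance of the proposition and it is not a matter of tuning a lower bound on $V_\lambda$; it requires three structural inputs which you have not supplied. First, after the self-similar change of variables $v^\lambda_s(x)=s^{-1}v^{\lambda s}_1(s^{-1/2}x)$ and the exponential time change $s=e^{-\hat s}$, the Brownian motion in the Feynman--Kac exponent becomes a stationary Ornstein--Uhlenbeck process $Y$ and the quantity of interest becomes $E^Y_m(\exp\{-\int_0^{\log(1/u^2)}v_1^{\epsilon^{-1}te^{-s}}(Y_s+ut^{-1/2}e^{s/2})\,ds\})$: you must recognise this as approximately the survival probability of an OU process killed at rate $G=v^\infty_1$. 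Second, replacing $v_1^{\lambda}$ by its limit $G$ in the exponent requires a quantitative rate of convergence, and this is supplied by Proposition~\ref{prop_vlambdaROC} (built on the harder Proposition~\ref{new3.6}); a mere monotone or locally-uniform limit is not enough to control the multiplicative correction $\exp(\int (v^\infty-v^\lambda))$. Third, and decisively, the survival probability $P^Y_m(\rho^G>\log(1/u^2))$ decays like $e^{-\lambda_0^G\log(1/u^2)}=u^{2\lambda_0^G}$ and one needs $\lambda_0^G=1$ \emph{exactly} to land on $u^2$; that identification (Proposition~\ref{prop_leadeig}, via the explicit eigenfunction $-e^{x^2/2}G'(x)$) is a genuine piece of spectral theory, not something a Paley--Zygmund or crude moment computation will give you. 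In short: your Palm-plus-Chernoff setup is the correct first step, but the proposal as written is missing the OU time change, the rate-of-convergence lemma, and the exact eigenvalue identity, and the alternative routes you float (Paley--Zygmund; direct moment bounds for the spine immigration) will at best return $u^{2-\eta}$, not the quadratic rate the proposition claims.
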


One can understand the important $u^2$ behaviour in the above for small $u,\epsilon$ from the improved modulus of continuity
of $X(t,\cdot)$ near its zero set (mentioned above).  Theorem~2.3 of \cite{MP11} shows that for $\eta>0$ there is  $\delta(\omega)>0$ so that $|X(t,x)-X(t,x+h)|\le |h|^{1-\eta}$ for $X(t,x)\le |h|\le \delta(\omega)$.
This readily leads to (for $\epsilon,\ u$ small) $X(t,\tau^\epsilon(t))\le \epsilon^{.5-\eta}$ and after a short
argument (consider $u\ge \epsilon^{.5-\eta}$ and $u< \epsilon^{.5-\eta}$ separately) that 
\[\int_{\tau^\epsilon(t)-u}^{U_t} X(t,x)dx=\int_{\tau^\epsilon(t)-u}^{\tau^\epsilon}X(t,x)\,dx+\epsilon\le c(\epsilon^{1-2\eta}+u^{2-\eta}),\]
which comes close to the above mean behaviour. 
The actual proof uses the unique non-negative solution, $v_t^\infty(x)=v^\infty(t,x)$, in $C^{1,2}((0,\infty)\times\R)$ of 
\begin{equation}\label{slpde}
\frac{\partial v^\infty}{\partial t}(t,x)=\frac{1}{2}\frac{\partial^2 v^\infty}{\partial x^2}-\frac{(v^\infty)^2}{2},\quad v^\infty_0=\infty1_{(-\infty,0]}.
\end{equation}
Such semilinear parabolic equations arise of course as  exponential dual functions for super-Brownian motion--see Section~\ref{secpde} for more on this in general, and Theorem~\ref{vinfinitypde} for more information on the particular equation above, including its precise meaning.  More specifically, the proof uses $G(x)=v^\infty(1,x)$ which also is the unique $C^\infty$
solution of \eqref{FODE} but now with the boundary conditions (see Lemma~\ref{lemma_Gproperties}(c))
\begin{equation}\label{GBCs}
\lim_{x\to \infty}x^2 G(x)=0,\quad \lim_{x\to -\infty} G(x)=2.
\end{equation}
Using a Palm measure formula for $X_t$ (Theorem 4.1.3 from \cite{DP}), the Feynman-Kac Formula and some pde bounds (notably Proposition~\ref{prop_vlambdaROC}), we show (see \eqref{e_endmassprop2}) that for $u^2\ge \epsilon$ (from which the general
case follows easily),
\begin{equation}\label{evalueupperbound}
\N_0\Bigl(\int_{\tau^\epsilon(t)-u}^\infty X(t,x)\,dx\Bigr)\le c(t)E^Y_m\Bigl(\exp\Bigl(-\int_0^{\log(1/u^2)}G(Y_s)\,ds\Bigr)\Bigr),
\end{equation}
where $Y$ is an (unkilled) Ornstein-Uhlenbeck process with initial law $m$ under $P^Y_m$.  So, as in \cite{MMP17}, one can use the spectral decomposition of $A^G$ to see that the  right-hand side of \eqref{evalueupperbound} is at most $c(t)e^{-\lambda_0^G\log(1/u^2)}=c(t)u^{2\lambda_0^G}$. Unlike $\lambda_0^F$, we can identify the eigenfunction for $\lambda_0^G$ and verify that $\lambda_0^G=1$ (Proposition~\ref{prop_leadeig}), and hence obtain the required bound in Proposition~\ref{prop_boundaryGrowthBd}(b). \\

Turning to Theorem~\ref{thm_01-law} itself, Theorem~A(b),(d) and the second moment method easily give  (Lemma~\ref{lemma_Ltposhalfline})
\begin{equation*}
\N_0(L_\delta([3\sqrt{\delta},\infty))>0|X_\delta\neq 0)\ge p>0\quad\forall\ \delta>0.
\end{equation*}
One can then use this to conclude that the right-most ancestor, say at $x$, at time $t-\delta$ of the population at time $t$
will have descendants at time $t$ with a positive boundary local time on $[x+3\sqrt{\delta},\infty)$ with conditional (on $\cF_{t-\delta}$) probability at least $p$. Now one must show that the descendants of the  other ancestors at time $t-\delta$ do 
not flood into the boundary region of the right-most ancestor and hence remove it from the overall boundary.
This issue captures the delicate and non-monotone character of the boundary.  To resolve it we use a classical hitting estimate for $X$ from \cite{DIP89} (see Theorem~\ref{th_DIP33} below) and Proposition~\ref{prop_boundaryGrowthBd}. This will lead to a uniform lower bound on $P^X_{X_0}(L_t>0|\cF_{t-\delta_n})$ with high probability at least on $\{X_t\neq 0\}$ and the martingale convergence theorem
then shows $L_t>0$ with high probability on $\{X_t\neq 0\}$.  \\

Theorem~\ref{thm_01_law_local} is proved in Section~\ref{sec:local01}. Section~\ref{prelims} reviews a number of standard tools we will need in the proofs including the spectral decomposition of the killed Ornstein-Uhlenbeck processes, some cluster decompositions of super-Brownian motion based on historical information, and the aforementioned hitting estimate for super-Brownian motion.  \\

{\bf Acknowledgement.} We thank Peiyuan Zhu for allowing us to report on his numerical work on the 
estimation of $\lambda_0$.

\section{Some Preliminaries}\label{prelims}
\subsection{Killed Ornstein-Uhlenbeck Processes}\label{KOU}
Recall that $Y$ is an Ornstein-Uhlenbeck process with generator $A$, starting at $x$ under $P^Y_x$.  
As above for $\phi\in C([-\infty,\infty])$, $\phi\ge 0$, $A^\phi$ is the generator of the Ornstein-Uhlenbeck process, $Y^\phi$, killed at time 
$\rho_\phi=\inf\{t:\int _0^t\phi(Y_s)ds>e\}$, where $e$ denotes an independent exponential r.v. with mean one.   
The result below is standard, and included in Theorem~2.3 of Mueller, Mytnik and Perkins \cite{MMP17}. 

\begin{theorem} \label{thm_OU}
(a) $A^\phi$ has a complete orthonormal family $\{\psi_n : n \geq 1 \}$ of $C^2$ eigenfunctions of $\cL^2(m)$ satisfying $A^\phi \psi_n = -\lambda_n \psi_n$, where $\{-\lambda_n\}_{n=1}^\infty$ is a non-increasing sequence of non-positive eigenvalues such that $ \lambda_n \to \infty$. Furthermore, $-\lambda_0$ is a simple eigenvalue and $\psi_0 > 0$.\\
(b) Let $\theta = \int \psi_0\, dm$. For all $0<\delta$, there exists $c_\delta$ such that for all $x \in \R$,
\begin{equation} \label{OU_survivalprob}
|e^{\lambda_0 t}P^Y_x (\rho_\phi > t) - \theta \psi_0(x)|  \leq c_\delta e^{\delta x^2} e^{-(\lambda_1 - \lambda_0) t},
\end{equation}
and
\begin{equation} \label{OU_psi0bd}
\psi_0(x) \leq c_\delta e^{\delta x^2}.
\end{equation}
In particular, 
\begin{equation} \label{OU_survivalprobbd}
P^Y_x (\rho^\phi > t) \leq C_{\delta} e^{\delta x^2} e^{-\lambda_0 t}.
\end{equation}
and 
\begin{equation}\label{OUSbound}P^Y_m( \rho^\phi > t) \leq Ce^{-\lambda_0 t}.
\end{equation}
\end{theorem}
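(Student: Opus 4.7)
The plan is to follow standard spectral theory for a Schr\"odinger-type perturbation of the Ornstein--Uhlenbeck generator; the result is essentially Theorem 2.3 of \cite{MMP17} and I would refer the reader there, but here is the sketch I would give from scratch.

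\emph{Step 1: self-adjointness and compactness.} Work with the killed semigroup $P_t^\phi f(x)=E^Y_x[f(Y_t)\mathbf{1}_{\rho_\phi>t}]$ on $\cL^2(m)$. Self-adjointness of $P_t^\phi$ follows because $m$ is the reversing measure of the unkilled OU process and the Feynman--Kac functional $\exp(-\int_0^t\phi(Y_s)\,ds)$ is invariant under time reversal of $Y$. The unkilled transition density is explicit from the Mehler formula, and a direct computation shows that the integral kernel $k_t(x,y)$ of $P_t^\phi$ on $\cL^2(m)$ is square-integrable with respect to $m\otimes m$ for every $t>0$, so $P_t^\phi$ is Hilbert--Schmidt and in particular compact.

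\emph{Step 2: spectral decomposition and qualitative properties.} The spectral theorem for compact self-adjoint operators gives an orthonormal basis $\{\psi_n\}$ of $\cL^2(m)$ with $P_t^\phi\psi_n=e^{-\lambda_n t}\psi_n$; sub-Markovianity forces $\lambda_n\ge 0$ and compactness forces $\lambda_n\to\infty$. Since the killed density $p_t^\phi$ is strictly positive on $\R\times\R$ (the OU density is strictly positive and the Feynman--Kac weight is bounded below by a positive quantity along any set of paths of positive Wiener measure), a Krein--Rutman / Perron--Frobenius argument yields simplicity of the top eigenvalue $e^{-\lambda_0 t}$ and strict positivity of $\psi_0$. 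Elliptic regularity applied to $A\psi_n-\phi\psi_n=-\lambda_n\psi_n$, using that $A$ is uniformly elliptic on compacts and $\phi$ is continuous, then bootstraps the $\cL^2(m)$ eigenfunctions to classical $C^2$ solutions.

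\emph{Step 3: the pointwise bounds in (b).} Expand $P^Y_x(\rho_\phi>t)=P_t^\phi\mathbf{1}(x)=\sum_n\theta_n e^{-\lambda_n t}\psi_n(x)$ with $\theta_n=\int\psi_n\,dm$ and $\theta=\theta_0$; the series converges in $\cL^2(m)$ and, after isolating the $n=0$ term, the remainder has $\cL^2(m)$ norm at most $Ce^{-\lambda_1 t}$. To promote this to the pointwise Gaussian-growth bound \eqref{OU_survivalprob}, I would use the semigroup property $P_t^\phi=P_s^\phi P_{t-s}^\phi$ for a small fixed $s>0$: the remainder then equals $P_s^\phi$ applied to an $\cL^2(m)$ function of norm $\le Ce^{-(\lambda_1-\lambda_0)(t-s)}$, and Cauchy--Schwarz combined with a Mehler-kernel estimate of the form $\int k_s(x,y)^2\,dm(y)\le c_s e^{2\delta x^2}$ (any $\delta>0$ if $s$ is small enough) gives the bound with the advertised $e^{\delta x^2}$ prefactor. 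The same smoothing trick applied to the identity $\psi_0=e^{\lambda_0 s}P_s^\phi\psi_0$ yields \eqref{OU_psi0bd}; \eqref{OU_survivalprobbd} is then immediate from \eqref{OU_survivalprob} together with \eqref{OU_psi0bd}; and \eqref{OUSbound} follows by integrating \eqref{OU_survivalprob} against $m(dx)$, since the Gaussian tail of $m$ absorbs the $e^{\delta x^2}$ error for $\delta<1/2$.

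\emph{Main obstacle.} The only genuinely delicate point is Step 3: converting the $\cL^2(m)$ control of the remainder of the eigenfunction expansion into pointwise bounds with the explicit $e^{\delta x^2}$ rate in $x$. Steps 1 and 2 are textbook once self-adjointness and the Hilbert--Schmidt property are verified, whereas in Step 3 the specific OU geometry enters through the Mehler kernel and has to be used carefully to get uniform control in the unbounded spatial variable.
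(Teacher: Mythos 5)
The paper does not prove Theorem~\ref{thm_OU} itself; it simply cites it as a standard fact contained in Theorem~2.3 of \cite{MMP17}, which is exactly what you propose to do. Your supplementary sketch (self-adjointness and Hilbert--Schmidt property of the killed Feynman--Kac semigroup on $\cL^2(m)$, Krein--Rutman for the lead eigenvalue, and a semigroup-smoothing step using the Mehler kernel to upgrade $\cL^2(m)$ control to the pointwise $e^{\delta x^2}$ bounds) is the standard argument and is correct.
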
\subsection{Cluster and Historical Decompositions of Super-Brownian Motion}\label{SBMdecomps}

We recall the cluster decomposition of super-Brownian motion from Theorem 4 in Section IV.3 of \cite{LG1999}. 
If $X_0\in \cM_F(\R)$, let $\Xi_{X_0}$ be a Poisson point process on the space $C([0,\infty), \cM_F(\R))$
of continuous measure-valued paths with intensity $\N_{X_0}(\cdot)=\int\N_x(\cdot)dX_0(x)$. Then
\begin{equation}\label{SBMPPPdecomp}
X_t(\cdot)=\begin{cases}\int\nu_t(\cdot)\Xi_{X_0}d(\nu)& \text{if $t>0$}\\
X_0(\cdot)& \text{if $t=0$}
\end{cases}
\end{equation}
defines a super-Brownian motion with initial state $X_0$.  In particular this shows that for $t>0$, ($\stackrel{\cD}{=}$ denotes equality in law)
\begin{equation} \label{e_clusterdecomp1}
X_t \stackrel{\cD}{=} \sum_{i =1}^{N} X^i_{t},
\end{equation}
where $N$ has a Poisson law with mean $2X_0(1)/t=\N_{X_0}(X_t>0)$, and given $N$, $\{X^i_{t} : i \leq N\}$ are iid random measures with law $\N_{X_0}(X_t\in\cdot|X_t>0)$.  The summands in \eqref{e_clusterdecomp1} correspond to the contributions to $X_t$ from each of the finite number of ancestors
at time $0$ of the population at time $t$. \\

We will also make use of the historical process associated with a super-Brownian motion. The historical process encodes the genealogical information of the super-Brownian motion $X$. Good introductions may be found in \cite{DP}, or Sections II.8 and III.1 of \cite{P2002}. Let $C([0,\infty), \R)$ denote the space of continuous $\R$-valued paths on $[0,\infty)$, endowed with the compact-open topology. The historical process $(H_t : t\geq 0)$ is a measure-valued time-inhomogeneous Markov process taking values in $\cM_F(C([0,\infty), \R))$ such that $y(\cdot)=y(t\wedge\cdot)$ for $H_t$-a.a. $y$ for all $t\ge 0$ a.s. 
If we identify constant paths with $\R$, then, viewing $H_0$ as an element of $\cM_F(\R)$, we 
can recover the super-Brownian motion $X$ starting at $X_0=H_0$ from its associated historical process $H$ by projecting
$H_t$ onto time $t$, that is,
$X_t(\cdot)=H_t(\{y\in C([0,\infty), \R):y(t)\in\cdot\})$. Intuitively, $(y(s),s\le t)$ gives the historical path of the particle $y(t)$ in
the support of $X_t$. We will use a modulus of continuity for the paths $y$ governed by $H_t$. Let $S(H_t)$ denote the closed support of $H_t$ and set $h(r) = (r \log(1/r))^{1/2}$. For $c>0$ and $\delta > 0$, define $K(c,\delta)$ by
\begin{equation}\label{Kdef}
K(c,\delta) = \{ y \in C([0,\infty), \R) : | y_r - y_s | \leq ch(r-s) \,\,\, \forall \, r,s \geq 0 \text{ s.t. } |r-s| \leq \delta \}. 
\end{equation} 
By Theorem III.1.3(a) of \cite{P2002}, if $c>2$ and $T>0$, then $P^X_{X_0}$-a.a. $\omega$, there exists $\delta = \delta(T,c,\omega) > 0$ a.s. such that 
\begin{equation}\label{Hmod}S(H_t) \subset K(c,\delta)\text{ for all } t\in[0,T]. 
\end{equation}
Moreover the proof of the above shows that for any $c>2, T>0$ there are $\rho(c)>0$ and $C(T)$ such
that 
\begin{equation}\label{deltarate}
P^X_{X_0}(\delta(T,c)\le r)\le C_{\ref{deltarate}}(T)r^{\rho(c)} \text{ for all } r\in(0,1],
\end{equation}
where
\begin{equation}\label{rhoasymp}
\lim_{c\to\infty}\rho(c)=\infty.
\end{equation}\\

A second decomposition of a superprocess based on historical information will also play an important role in our arguments.  Let $(\cF_t)$ be the usual right-continuous completed filtration generated by $H$ and assume $0\le\delta\le t$ are fixed.   Assume $\tau\in[-\infty,\infty]$ is a $\sigma(X_{t-\delta})$-measurable random variable.  We decompose $X_{t-\delta}$ into the sum of two random measures:
\begin{equation}\label{XRdef}
X^R_{t-\delta}(dx)=X^{R,\tau,\delta}_{t-\delta}(dx)=1_{\{x\ge\tau\}}X_{t-\delta}(dx)\ \text{ and }\ X^L_{t-\delta}(dx)=X^{L,\tau,\delta}_{t-\delta}(dx)=1_{\{x<\tau\}}X_{t-\delta}(dx).\end{equation}
We then track the descendants of each of these populations at future times and so define measure-valued processes by
\[\hat X^R_s(\phi)=\hat X^{R,\tau,\delta}_s(\phi)=\int\phi(y_{t-\delta+s})1(y_{t-\delta}\ge\tau)H_{t-\delta+s}(dy);\ \ \hat X^L_s(\phi)=\hat X^{L,\tau,\delta}_s(\phi)=\int\phi(y_{t-\delta+s})1(y_{t-\delta}<\tau)H_{t-\delta+s}(dy).\]
Clearly we have 
\begin{equation}\label{histdecomp}
\hat X^R_s+\hat X^L_s=X_{t-\delta+s}\text{ for all }s\ge 0,\ \text{ and  (if $s=0$) }\ X^R_{t-\delta}+X^L_{t-\delta}=X_{t-\delta}.
\end{equation}
By (III.1.3) on p. 193 of \cite{P2002} and
the Markov property of $H$, we get:
\begin{align}\label{histcondlaw}
&\text{Conditional on $\cF_{t-\delta}$, $(\hat{X}^{R}_s)$ and $(\hat{X}^{L}_s)$ are independent $(\cF_{t-\delta+s})$-super-Brownian motions with initial laws}\\
&\text{$X^R_{t-\delta}$ and $X^L_{t-\delta}$, respectively.}\nonumber
\end{align}

Given the above decompositions of super-Brownian motion into a sum of independent super-Brownian motions, it is not surprising that we will also need to know how the corresponding boundary local time, $L_t$, decomposes.  Recall that a sum
of $n$ independent super-Brownian motions with initial conditions $X_0^1,\dots, X_0^n$ is a super-Brownian motion starting at 
$X_0^1+\dots+X_0^n$.  The next result is Theorem~1.9 of \cite{H2018}.

\begin{theorem}\label{THMW} Suppose $X^1,\dots,X^n$ are independent one-dimensional super-Brownian motions, starting at 
$X^1_0,\dots, X_0^n\in\cM_F(\R)$ and with boundary local times $L^1_t,\dots L^n_t$.  Let $X=\sum_{i=1}^n X^i$ and $L_t$ be the boundary local time of $X$. Then
\begin{equation}\label{LTdecompo}
dL_t(x)=\sum_{i=1}^n1\Bigl(\sum_{j\neq i}X^j(t,x)=0\Bigr)dL_t^i(x)=\sum_{i=1}^n1(X(t,x)=0)dL_t^i(x).
\end{equation}
\end{theorem}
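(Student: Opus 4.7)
The plan is to work with the approximating measures $L^\lambda_t$. Using $X=\sum_i X^i$ and $e^{-\lambda X(t,x)}=\prod_i e^{-\lambda X^i(t,x)}$, the definition of $L^\lambda_t$ factorizes pointwise as
\begin{equation*}
L^\lambda_t(\phi) = \sum_{i=1}^n \int \phi(x)\prod_{j\neq i} e^{-\lambda X^j(t,x)}\, dL^{i,\lambda}_t(x).
\end{equation*}
Denoting the conjectured limit by $\tilde L_t(\phi):=\sum_i \int \phi(x)\, 1_{\{\sum_{j\neq i} X^j(t,x)=0\}}\, dL^i_t(x)$, my goal is to show $L_t=\tilde L_t$ a.s. by establishing both the pointwise bound $L_t\le \tilde L_t$ and equality of expectations $E[L_t(\phi)]=E[\tilde L_t(\phi)]$ for all bounded continuous $\phi\ge 0$.

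For the upper bound, for any $\mu<\lambda$ the inequality $e^{-\lambda z}\le e^{-\mu z}$ gives $L^\lambda_t(\phi)\le \sum_i L^{i,\lambda}_t(\phi\prod_{j\neq i} e^{-\mu X^j(t,\cdot)})$. Each summand has a bounded continuous integrand, so the weak convergence $L^{i,\lambda}_t\to L^i_t$ in probability from Theorem~A(a), followed by $\mu\to\infty$ via monotone convergence, yields $L_t\le\tilde L_t$.

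For the matching of means, write $X^{(i)}:=\sum_{j\neq i} X^j$ and $q_\lambda^{(i)}(x):=E[e^{-\lambda X^{(i)}(t,x)}]$. This is a deterministic continuous function decreasing to $q_\infty^{(i)}(x):=P(X^{(i)}(t,x)=0)$, itself continuous by the Laplace-functional formula for super-Brownian motion. Independence of $X^i$ and $X^{(i)}$ together with Fubini give $E[L^\lambda_t(\phi)]=\sum_i E[L^{i,\lambda}_t(\phi\cdot q_\lambda^{(i)})]$. Sandwiching $L^{i,\lambda}_t(\phi q_\infty^{(i)})\le L^{i,\lambda}_t(\phi q_\lambda^{(i)})\le L^{i,\lambda}_t(\phi q_\mu^{(i)})$ for $\lambda\ge\mu$, together with weak convergence of $L^{i,\lambda}_t$ applied to the outer bounded continuous integrands and monotone convergence as $\mu\to\infty$, gives $L^{i,\lambda}_t(\phi q_\lambda^{(i)})\to L^i_t(\phi q_\infty^{(i)})$ in probability. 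The second-moment bound Theorem~A(d), transferred to $P^X_{X_0}$ via \eqref{SBMPPPdecomp}, yields uniform integrability and hence $L^1$ convergence. Summing and invoking independence again, $E[L^\lambda_t(\phi)]\to \sum_i E[L^i_t(\phi q_\infty^{(i)})]=E[\tilde L_t(\phi)]$, while $E[L^\lambda_t(\phi)]\to E[L_t(\phi)]$ by the same uniform integrability. This yields $E[L_t(\phi)]=E[\tilde L_t(\phi)]$, and combined with the upper bound, $L_t=\tilde L_t$ a.s. The equivalence of the two expressions in the theorem then follows since $L^i_t$ is supported on $\{X^i(t,\cdot)=0\}$, so $\{\sum_{j\neq i} X^j(t,x)=0\}$ and $\{X(t,x)=0\}$ agree $L^i_t$-a.s.

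The main obstacle is the lower bound $L_t\ge \tilde L_t$. The limit $1_{\{X^j(t,\cdot)=0\}}$ is only upper semi-continuous, so a direct weak-convergence argument from $L^\lambda_t\to L_t$ only yields the upper bound $L_t \le \tilde L_t$. The mean-matching strategy bypasses this by integrating out first and using independence of the $X^i$'s, but requires upgrading convergence in measure of $L^{i,\lambda}_t\to L^i_t$ to $L^1$ convergence via the second-moment estimate Theorem~A(d) together with the cluster decomposition \eqref{SBMPPPdecomp}.
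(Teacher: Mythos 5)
The paper does not prove Theorem~\ref{THMW} at all; it simply cites Theorem~1.9 of \cite{H2018}, so there is no in-paper argument to compare against. Taking your proposal on its own merits, the overall strategy (exact factorization of the approximating measure $L^\lambda_t$ into the sum $\sum_i L^{i,\lambda}_t(\cdot\,\prod_{j\neq i}e^{-\lambda X^j(t,\cdot)})$, a pathwise upper bound $L_t\le\tilde L_t$ from upper semicontinuity, and then equality of means via independence and Fubini) is a natural and sound plan, and the topological subtlety you identify --- that weak convergence alone only gives the one-sided bound --- is exactly the right thing to flag. Your Fubini identity $E[L^\lambda_t(\phi)]=\sum_i E[L^{i,\lambda}_t(\phi q^{(i)}_\lambda)]$ and the identification $E[\tilde L_t(\phi)]=\sum_i E[L^i_t(\phi q^{(i)}_\infty)]$ are both correct.

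However, there is a genuine gap at the uniform integrability step. To turn the convergence in probability of $L^\lambda_t(\phi)$ and of $L^{i,\lambda}_t(\phi q^{(i)}_\lambda)$ into convergence of expectations (equivalently, to show $\lim_\lambda E[L^\lambda_t(\phi)]=E[L_t(\phi)]$, which is the only way to get the reverse inequality $E[L_t(\phi)]\ge E[\tilde L_t(\phi)]$), you invoke Theorem~A(d). But Theorem~A(d) only bounds $\N_0(L_t(1)^2)$, i.e.\ the second moment of the \emph{limit} local time; it says nothing about $\sup_\lambda E[L^\lambda_t(1)^2]$ or any other $\lambda$-uniform moment control on the approximating family $\{L^\lambda_t\}$, which is what is actually needed. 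Without such a bound, Fatou's lemma only gives $\liminf_\lambda E[L^\lambda_t(\phi)]\ge E[L_t(\phi)]$ and $\liminf_\lambda \sum_i E[L^{i,\lambda}_t(\phi q^{(i)}_\lambda)]\ge E[\tilde L_t(\phi)]$, which, combined with the identity $E[L^\lambda_t(\phi)]=\sum_i E[L^{i,\lambda}_t(\phi q^{(i)}_\lambda)]$, does not pin down the relation between $E[L_t(\phi)]$ and $E[\tilde L_t(\phi)]$. The required uniform-in-$\lambda$ second-moment bound is almost certainly established in \cite{H2018} en route to proving the $L^2$ (hence in-measure) convergence quoted in Theorem~A(a), but it is not deducible from the statements of Theorem~A reproduced in this paper, so the citation as written does not close the argument.
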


\subsection{Hitting Probabilities of Super-Brownian motion}
The proofs of our main theorems will make use of bounds on hitting probabilities for super-Brownian motion.
\begin{theorem}  \label{th_DIP33} There exists a universal constant $c_{\ref{th_DIP33}} < \infty$ such that: \\
(i) For $R > 2\sqrt t$,
\[ \N_0 \big( X_s([R,\infty)) > 0 \,\, \text{ for some } s \leq t \big) \leq c_{\ref{th_DIP33}}\, R^{-2} \left( \frac{R}{\sqrt t}\right)^{3} e^{-R^2/2t}. \] 
(ii) For all $X_0 \in \cM_F(\R)$ such that $X_0$ is supported on $(-\infty,0]$ and for all $R>2\sqrt{t}$, we have
\[ P^X_{X_0} \big( X_s([R,\infty)) = 0 \,\, \text{ for all } s \leq t \big) \geq \exp \left( -c_{\ref{th_DIP33}}\, \int_{-\infty}^0 (R-x)^{-2} \left( \frac{R-x}{\sqrt t}\right)^{3} e^{-(R-x)^2/2t} dX_0(x) \right). \]
\end{theorem}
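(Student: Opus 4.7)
My plan is to prove (i) using the exponential duality for super-Brownian motion and then derive (ii) as a corollary via the Poisson cluster decomposition.

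For (i), define $W(t,x) = \N_x(X_s([R,\infty)) > 0 \text{ for some } s \leq t)$. By translation invariance, $W(t,x) = W(t-x;R-x)$ with obvious abuse, so it suffices to bound $W(t,0)$ under the assumption $R > 2\sqrt t$. The first step is to approximate $W$ from below by log-Laplace functionals. For $\lambda, \delta > 0$ and a smooth approximation $\chi_\delta$ of $1_{[R,\infty)}$, let
\[v_{\lambda,\delta}(t,x) = \N_x\Bigl(1 - \exp\bigl(-\lambda \int_0^t X_s(\chi_\delta)\, ds/\delta\bigr)\Bigr).\]
By the standard exponential duality for super-Brownian motion, each $v_{\lambda,\delta}$ satisfies a semilinear PDE of the form $\partial_t v = \tfrac{1}{2} v'' - \tfrac{1}{2} v^2 + \lambda\chi_\delta/\delta$. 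Passing to the limit $\lambda\to\infty$ and $\delta\downarrow 0$ (using path continuity of $X$ on $[0,t]$ to exchange the supremum and the integral), one identifies $W$ as the minimal non-negative solution to
\begin{equation*}
\partial_t W = \tfrac{1}{2}\partial_x^2 W - \tfrac{1}{2} W^2 \text{ on } \{x<R,\,t>0\},\quad W(t,R^-)=\infty,\quad W(0^+,x)=0\text{ for }x<R.
\end{equation*}

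The second and main step is to construct an explicit supersolution of the claimed form. Let
\[g(t,x) = c_0 \,(R-x)^{-2}\Bigl(\tfrac{R-x}{\sqrt t}\Bigr)^{3} e^{-(R-x)^2/(2t)}\]
on the region $\{R-x > 2\sqrt t\}$. A direct calculation, using that in this regime the dominant contribution in derivatives comes from the exponential factor $e^{-(R-x)^2/2t}$ (which itself satisfies the heat equation exactly), shows that for $c_0$ sufficiently large one has $\partial_t g \geq \tfrac{1}{2} g'' - \tfrac{1}{2} g^2$; the nonlinear term $g^2$ is of lower order thanks to the additional exponential decay, and the boundary/initial conditions on $\{R-x = 2\sqrt t\}$ and at $t=0$ dominate those of $W$. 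A parabolic comparison principle (applied to the minimal solution $W$, after a routine truncation argument to avoid the singularity at $R$) then yields $W(t,0) \leq g(t,0)$, which is the claim.

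For (ii), by \eqref{SBMPPPdecomp} the super-Brownian motion under $P^X_{X_0}$ is the sum of the contributions of independent Poisson clusters with intensity $\N_{X_0}=\int \N_x\,dX_0(x)$. Since the event $\{X_s([R,\infty))=0\,\forall s\leq t\}$ factors over these clusters (a cluster satisfies it iff it does not charge $[R,\infty)$ at any time $s\leq t$), we have
\begin{equation*}
P^X_{X_0}\bigl(X_s([R,\infty))=0\,\forall s\leq t\bigr) = \exp\Bigl(-\int \N_x(X_s([R,\infty))>0 \text{ some } s\leq t)\,dX_0(x)\Bigr).
\end{equation*}
By translation invariance $\N_x(X_s([R,\infty))>0) = \N_0(X_s([R-x,\infty))>0)$, and since $X_0$ is supported on $(-\infty,0]$ we have $R-x\geq R>2\sqrt t$ for every $x$ in the support, so (i) applies with $R-x$ in place of $R$. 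Substituting the bound from (i) gives (ii).

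The main obstacle is the comparison argument in the second step of (i): verifying that $g$ is a supersolution requires careful bookkeeping in the computation of $\partial_t g$ versus $\tfrac{1}{2}g''$, and the comparison principle must be applied to the minimal solution, which in turn must be legitimated by the approximation scheme in the first step. The condition $R>2\sqrt t$ is exactly what is needed to dominate the nonlinear correction $g^2/2$.
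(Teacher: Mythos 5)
Your part (ii) is essentially the paper's argument: the paper proves the identity
\[
P^X_{X_0}\bigl(X_s([R,\infty))=0\ \forall s\le t\bigr)=\exp\Bigl(-\int \N_x(X_s([R,\infty))>0\text{ some }s\le t)\,dX_0(x)\Bigr)
\]
by applying the Laplace functional of a Poisson point process to $\exp(-\theta\int_0^tX_s([R,\infty))\,ds)$ and letting $\theta\to\infty$, which is just a careful way of saying the event factors over the clusters, as you assert. That part is fine.

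For part (i) you depart from the paper, which simply cites Theorem 3.3(b) of Dawson--Iscoe--Perkins \cite{DIP89} and transfers it to $\N_0$ via \eqref{SBMPPPdecomp}. Your proposal to give a self-contained barrier argument is a legitimate alternative (and in fact is close in spirit to what DIP89 do), but as written it has a real gap, and one of the pieces you flag as delicate is actually trivial. Writing $u=R-x$ and $g(t,x)=c_0\,u\,t^{-3/2}e^{-u^2/2t}$, $g$ is a constant multiple of a spatial derivative of the heat kernel and hence solves the heat equation \emph{exactly}: $\partial_t g=\tfrac12 g''$. Thus $\partial_t g-\tfrac12 g''+\tfrac12 g^2=\tfrac12 g^2\ge 0$ for every $c_0>0$; no bookkeeping, no regime restriction, and no largeness of $c_0$ is needed for the supersolution inequality. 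The genuine issue is the parabolic boundary of the domain $D=\{(t,x):t>0,\ R-x>2\sqrt t\}$ on which you compare $g$ with $W$. At $t=0$ both vanish, but on the curve $\{R-x=2\sqrt t\}$ one has $g=2c_0e^{-2}/t$, while $g(t,R^-)=0$ (not $+\infty$), so the comparison cannot be pushed to $x=R$ and must instead be made on $D$. You therefore need the a priori bound $W(t,R-2\sqrt t)\le C/t$ in order to choose $c_0$; you assert that the boundary conditions of $g$ "dominate those of $W$," but this is precisely the nontrivial input and is nowhere established. It is provable --- e.g.\ by the scaling $W(t,R-2\sqrt t;R)=t^{-1}\N_0(X_s([2,\infty))>0\text{ for some }s\le 1)$ together with finiteness of the $\N_0$-measure of the event that a cluster escapes a bounded interval (an elementary ODE fact in $d=1$) --- but without this step the comparison is circular. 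Separately, the identification of $W$ as the minimal nonnegative solution of the limiting PDE with $W(t,R^-)=\infty$, $W(0^+,x)=0$, is only sketched (and the normalization $\lambda\int_0^t X_s(\chi_\delta)\,ds/\delta$ is not what you want if $\chi_\delta$ already approximates $1_{[R,\infty)}$); this is standard but needs more than a gesture before a comparison principle can be invoked against it.
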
 
\begin{proof}
(i) is a simple consequence of Theorem 3.3(b) of \cite{DIP89} with $d=1$ (and its proof) and \eqref{SBMPPPdecomp}. \\

We derive (ii) as a consequence of (i) by using \eqref{SBMPPPdecomp}. Indeed, this result and well-known formulas for the Laplace transform of a Poisson point process (see, for example, Theorem 24.14 of \cite{Klenke}) imply
that for  $R > 2\sqrt t$ and $\theta>0$, we have
\begin{align}
E^X_{X_0} \left( \exp \left( - \theta \int_0^t X_s([R,\infty))\,ds \right) \right) = \exp \left(- \int N_{x} \left( 1- \exp \left( - \theta \int_0^t X_s([R,\infty)) \, ds\right) \right) dX_0(x) \right). \nonumber
\end{align}
A simple application of Dominated Convergence allows us to let $\theta \to \infty$ and conclude that
\begin{align}
P^X_{X_0} \big( X_s([R,\infty)) = 0 \,\, \text{ for all } \, s < t  \big) =  \exp \left(- \int \N_{x} \big( X_s([R,\infty)) > 0 \,\, \text{ for some } s \leq t \big)\, dX_0(x) \right). \nonumber
\end{align}
Part (ii) follows by applying (i) and translation invariance.
\end{proof}

\section{Some Semi-linear Partial Differential Equations}\label{secpde}
We recall the relationship of the Laplace functional of super-Brownian motion with solutions of a semi-linear partial differential equation (PDE). We first present the integral form of the equation. Let $\cB_{b+}(\R)$ denote the space of non-negative bounded Borel functions on the line. Let $E^B_x$ denote the expectation of standard Brownian motion with $B_0 = x$, and denote the Brownian semigroup by $S_t$, ie. $S_t \phi(x) = E_x^B(\phi(B_t))$. By Theorem II.5.11 of \cite{P2002}, for $\phi \in \cB_{b+}(\R)$ there exists a unique non-negative solution to the integral equation
\begin{align} \label{e_integraleq}
v_t = S_t \phi - \int_0^t S_{t-s}(v_{s}^2/2) \,ds \hspace{ 4 mm } \text{ for } (t,x) \in [0,\infty) \times \R,
\end{align}
which we denote by $V_t^\phi(x)$, such that for all $X_0 \in  \cM_F(\R)$,
\begin{equation} \label{e_LapFun}
E^X_{X_0} \big( e^{-X_t(\phi)} \big) = e^{-X_0(V^\phi_t)}.
\end{equation}
It follows from \eqref{SBMPPPdecomp} and the above with $X_0=\delta_x$ that
\begin{equation} \label{e_LapFunCanon}
\N_x \big( 1 - e^{-X_t(\phi)} \big) = V^\phi_t(x).
\end{equation}
It is clear from \eqref{e_integraleq} that 
$ V^\phi_t(x)\le S_t\phi(x)\le \Vert\phi\Vert_\infty$, and so $V^\phi_t(x)-S_t\phi(x)\to 0$ as $t\downarrow 0$ pointwise in $x$.  This readily implies that 
\[V^\phi_t\stackrel{v}{\rightarrow} \phi=V_0^\phi\text{ as }t\downarrow 0,\]
where $\stackrel{v}{\rightarrow}$ denotes vague convergence of the Radon measure $V^\phi_t(x)dx$ to $\phi(x)dx$. 
\eqref{e_integraleq} is known as the mild form of the PDE
\begin{equation} \label{e_dualPDE}
\frac{\partial v_t}{\partial t} = \frac 1 2 \frac{\partial^2 v_t}{\partial x^2} - \frac{v_t^2}{2} \hspace{4 mm} \text{ for } (t,x) \in (0,\infty) \times \R,  \hspace{4 mm} v_t \stackrel{v}{\rightarrow}\phi=v_0 \text{  as } t\downarrow 0,
\end{equation}
where it will be understood that solutions of \eqref{e_dualPDE} will be in the space $C^{1,2}((0,\infty)\times\R)$ of functions with continuous partial derivatives up to order $1$ in time and $2$ in space on the given open set. This formulation allows one to consider initial conditions which are measures.  In this 
context Marcus and V\'eron \cite{MV1999} (Theorem 3.5) proved existence and uniqueness of a (non-negative) solution, $\bar V^\phi$,
to \eqref{e_dualPDE} as a rather special case of more general initial conditions which they classify with their initial trace theory. The use of their general theory may seem like overkill, but it will soon be convenient to use
a stability result in  \cite{MV1999}. It is easy to show that their solutions also satisfy the mild form \eqref{e_integraleq} as we now sketch.  First, monotonicity of $\bar V^\phi$ in $\phi$ (e.g. Theorem~3.4 of  \cite{MV1999})
and comparison with the elementary solution with initial (constant) value $\Vert\phi\Vert_\infty$ show that  
\begin{equation}\label{Vubound}
\bar V^\phi(t,x)\le \Vert\phi\Vert_\infty.
\end{equation}
For $\veps>0$, $\bar V^{\veps,\phi}_t:=\bar V^\phi_{t+\veps}$ defines the unique solution to \eqref{e_dualPDE} with $C^2$ initial data $\bar V^\phi_\veps$ and evidently the solution is now in $C^{1,2}([0,\infty)\times\R)$.  Such strong solutions are known to be solutions of the mild equation \eqref{e_integraleq} (see, e.g., the outline following Proposition II.5.10 in \cite{P2002} and use the above boundedness).  We  therefore have
\[ \bar{V}^\phi_{t+\epsilon} = S_t \bar{V}^\phi_\epsilon - \int_0^t S_{t-s}(\bar{V}^{\phi\,\,\, 2}_{s+ \epsilon}/2) \,ds \hspace{ 4 mm } \text{ for } (t,x) \in [0,\infty) \times \R.\] 
It is easy to justify taking the limit pointwise as $\epsilon \downarrow 0$ (use \eqref{Vubound}), which shows that $\bar{V}^\phi_t$ solves the integral equation \eqref{e_integraleq}. By uniqueness of solutions to \eqref{e_integraleq}
we conclude that $\bar V_t^\phi=V_t^\phi$. We therefore have that for $\phi \in \cB_{b+}(\R)$, there exists a unique non-negative solution $V^\phi_t$ to \eqref{e_dualPDE} (also satisfying \eqref{e_integraleq}) such that \eqref{e_LapFun} and \eqref{e_LapFunCanon} hold.\\

For $\lambda>0$, we denote by $v^\lambda_t$ the unique non-negative solution of
\begin{align}\label{e_dualPDElambda2}
&\frac{\partial v_t}{\partial t} = \frac 1 2 \frac{\partial^2 v_t}{\partial x^2} - \frac{v^2_t}{2}, \hspace{8 mm} \text{ for } (t,x) \in (0,\infty) \times \R,  \hspace{4 mm} v_t \stackrel{v}{\rightarrow} \lambda 1_{(-\infty,0]} =v_0^\lambda\text{  as } t\downarrow 0.
\end{align}
Given the above discussion, $v^\lambda_t$ is also a solution of \eqref{e_integraleq} with $\phi = \lambda 1_{(-\infty,0]}$. We will sometimes write $v^\lambda_t(x) = v^\lambda(t,x)$. By (\ref{e_LapFunCanon}), translation invariance and symmetry, $v^\lambda_t$ satisfies for all $t>0$,
\begin{align} \label{e_LapFunCanonlambda2}
v^\lambda_t(x) = \N_0 \big(1 - e^{-\lambda X_t((-\infty,-x])} \big) = \N_0 \big(1 - e^{-\lambda X_t([x,\infty))} \big). 
\end{align}
Similarly, by \eqref{e_LapFun} we also have for $t>0$,
\begin{equation} \label{e_LapFunlambda2}
E^X_{\delta_0}\left( e^{-\lambda X_t([x,\infty))} \right) = e^{-v^\lambda_t(x)}.
\end{equation}
It is an exercise to use uniqueness in  (\ref{e_dualPDElambda2}) or scaling properties of super-Brownian motion to show that $v^\lambda$ satisfies the following scaling relationship:
\begin{equation} \label{e_vscale}
v^\lambda(t,x) = rv^{\lambda/r}(rt,\sqrt{r}x)\hspace{ 5 mm} \forall \, r,\lambda > 0.
\end{equation}
Take $r=\lambda$ to see that 
\begin{equation}\label{scale2}
v^\lambda(t,x)=\lambda v^1(\lambda t, \sqrt{\lambda}x)
\end{equation}
and $r=1/t$ to obtain
\begin{equation}\label{scale3}
v^\lambda(t,x)=t^{-1}v^{\lambda t}(1,t^{-1/2}x).
\end{equation}
The following monotonicity properties are clear from (\ref{e_LapFunCanonlambda2}).
\begin{lemma} \label{lemma_monotonicity}
The map $x \to v^\lambda_t(x)$ is decreasing in $x$, and $\lambda \to v^\lambda_t(x)$ is increasing in $\lambda$.
\end{lemma}

We may let $\lambda\to\infty$ in \eqref{e_LapFunCanonlambda2} and \eqref{e_LapFunlambda2}
and use Dominated Convergence (for this in \eqref{e_LapFunCanonlambda2} use $1-e^{-\lambda X_t([x,\infty))}\le 1(X_t([x,\infty))>0)$
which is integrable with respect to $\N_0$) to see that $v^\lambda(t,x)\uparrow v^\infty(t,x)$ as $\lambda\uparrow\infty$, where
\begin{equation} \label{e_vinftyprob}
v^\infty_t(x) = \N_0\big( X_t([x,\infty)) > 0 \big)
 = -\log \big( P_{\delta_0}^X \left( X_t([x,\infty)) = 0 \right) \big).
\end{equation}
Note that
\begin{equation}\label{N0bound1}v_t^\infty(x)=\N_0(X_t([x,\infty))>0)\le \N_0(X_t(1)>0)=2/t,
\end{equation}
(see Theorem~II.7.2(iii) of \cite{P2002}) and in particular $v_t^\infty$ is finite for $t>0$. 
\begin{proposition} \label{prop_localunif}
$v^\lambda_t(x) \to v^\infty_t(x)$ uniformly on compact sets in $(0,\infty) \times \R$. In fact, there is uniform convergence for $(t,x) \in [a,\infty) \times [-R,R]$, for any $a>0$ and $R>0$.
\end{proposition}
\begin{proof}
Taking $\lambda \to \infty$ in \eqref{scale3}, we obtain that 
\begin{equation}\label{vinfscaling}v^\infty_t(x) = t^{-1} v^\infty_1(t^{-1/2}x).
\end{equation}
This fact and \eqref{scale3} imply that
\begin{equation}
v^\infty_t(x) - v^\lambda_t(x) = t^{-1} \left[ v^{\infty}_1(t^{-1/2} x) - v^{\lambda t}_1(t^{-1/2}x) \right]. \nonumber
\end{equation}
Let $0<a,R$. Then by the above, for $t\geq a$ we have
\begin{equation}
v^\infty_t(x) - v^\lambda_t(x) \leq a^{-1} \left[ v_1^{\infty}(t^{-1/2} x) - v_1^{\lambda a}(t^{-1/2}x) \right], \nonumber
\end{equation}
where we have used monotonicity in $\lambda$. Thus
\begin{align}
\hspace{ 5 mm}\sup_{t \ge a} \sup_{|x| \leq R} v^\infty_t(x) - v^\lambda_t(x) 
&\leq a^{-1} \sup_{t \ge a} \sup_{|x| \leq R} v_1^{\infty}(t^{-1/2} x) - v_1^{\lambda a}(t^{-1/2}x) \nonumber
\\ &\leq a^{-1} \sup_{|x| \leq a^{-1/2}R} v_1^{\infty}( x) - v_1^{\lambda a}(x). \nonumber
\end{align}
The continuity of $v^\infty_1$ (e.g., from \eqref{e_vinftyprob}) and Dini's Theorem imply that $v^\lambda_1 \uparrow v^\infty_1$ uniformly on compact sets, and the result follows.
\end{proof}

\begin{theorem}\label{vinfinitypde}
$v^\infty_t(x) \in C^{1,2}((0,\infty) \times \R)$ and is the unique non-negative solution to the PDE
\begin{align} \label{e_dualPDEinfty2}
(i)& \,\,\,\frac{\partial v_t}{\partial t} = \frac 1 2 \frac{\partial^2 v_t}{\partial x^2} - \frac{v^2_t}{2} \text{   on   } (t,x) \in (0,\infty) \times \R, \nonumber
\\ (ii)&\,\,\,  \lim_{t \downarrow 0} \int_U v^\infty_t (x)\,dx = + \infty \,\,\,\, \forall \, U\subseteq \R \text{ open such that } U \cap (-\infty,0] \neq \emptyset,
\\ & \,\,\, \lim_{t \downarrow 0} \int_K v^\infty_t (x)\,dx = 0 \,\,\,\, \forall \, K \subseteq  \R\text{ compact such that } K \subset(0,\infty). \nonumber
\end{align}
\end{theorem}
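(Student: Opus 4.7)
The plan is to establish (i) by passing to the limit in a shifted mild equation, then verify the two clauses of (ii), and finally deduce uniqueness from the Marcus--V\'eron initial trace theory.

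For (i), I would exploit Proposition~\ref{prop_localunif}, which gives $v^\lambda \to v^\infty$ uniformly on compact subsets of $(0,\infty)\times\R$; in particular $v^\infty$ is continuous there and bounded by $2/t$ via \eqref{N0bound1}. Fixing $t_0 > 0$ and using the semigroup property of the mild equation \eqref{e_integraleq} with the bounded continuous initial datum $v^\lambda_{t_0}$, one has
\begin{equation*}
v^\lambda_{t_0+r}(x) = S_r v^\lambda_{t_0}(x) - \int_0^r S_{r-s}\bigl((v^\lambda_{t_0+s})^2/2\bigr)(x)\,ds, \qquad r \ge 0.
\end{equation*}
The uniform bound $v^\lambda_s \le v^\infty_s \le 2/s$ allows monotone and dominated convergence as $\lambda \uparrow \infty$, producing the same identity for $v^\infty$. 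Since $v^\infty_{t_0}$ is bounded and continuous, standard parabolic regularity for the resulting semilinear mild equation gives $v^\infty \in C^{1,2}((t_0,\infty)\times\R)$ satisfying (i) on this domain, and since $t_0 > 0$ is arbitrary, (i) holds on $(0,\infty)\times\R$.

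For (ii), I would treat the two clauses separately. Given an open $U$ with $U \cap (-\infty,0] \ne \emptyset$, openness forces $|U \cap (-\infty,0]| > 0$; applying the vague convergence $v^\lambda_t \stackrel{v}{\rightarrow} \lambda 1_{(-\infty,0]}$ from \eqref{e_dualPDElambda2} to continuous approximations of $1_U$ from below gives $\liminf_{t\downarrow 0} \int_U v^\lambda_t(x)\,dx \ge \lambda |U \cap (-\infty,0]|$, and monotonicity $v^\infty \ge v^\lambda$ followed by $\lambda \to \infty$ proves the first clause. For compact $K \subset (0,\infty)$, fix $0<\eta<M$ with $K \subseteq [\eta,M]$; the probabilistic representation \eqref{e_vinftyprob} combined with the hitting estimate in Theorem~\ref{th_DIP33}(i) gives, for $t < \eta^2/4$ and $x \in [\eta,M]$,
\begin{equation*}
v^\infty_t(x) \le \N_0\bigl(X_s([x,\infty)) > 0 \text{ for some } s \le t\bigr) \le c_{\ref{th_DIP33}}\, t^{-3/2}\, x\, e^{-x^2/2t},
\end{equation*}
and a direct integration on $[\eta,M]$ yields $\int_K v^\infty_t\,dx \le C\, t^{-1/2} e^{-\eta^2/2t} \to 0$.

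For uniqueness I would invoke the initial trace theory of Marcus--V\'eron~\cite{MV1999} already alluded to above: the two clauses of (ii) identify the initial trace of any non-negative solution of (i) as $(\Gamma,\mu) = ((-\infty,0], 0)$, and their uniqueness theorem for solutions with a prescribed initial trace delivers the conclusion. The main obstacle is this uniqueness step, since existence and regularity are obtained as straightforward limits of the $v^\lambda$ thanks to the global bound $v^\infty_t \le 2/t$, while uniqueness requires genuine PDE input beyond soft probabilistic comparison; a secondary technical point is justifying the commutation of $\lambda \to \infty$ with the heat semigroup $S_r$ and the time integral in the shifted mild equation, which is routine given the uniform-on-compacts convergence in Proposition~\ref{prop_localunif}.
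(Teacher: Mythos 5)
Your proposal is correct, but it takes a genuinely different route for the existence and initial-trace parts of the statement. The paper's proof is a one-liner: having shown local uniform convergence $v^\lambda \to v^\infty$ in Proposition~\ref{prop_localunif}, it invokes the stability result, Theorem~3.10 of \cite{MV1999}, which simultaneously produces the $C^{1,2}$ regularity of the limit, shows it solves the PDE, and identifies its initial trace, so that \eqref{e_dualPDEinfty2}(i) and (ii) both fall out at once; uniqueness then comes from Theorem~3.5 of \cite{MV1999}. You instead verify (i) and (ii) by hand: for (i) you pass to the limit in the shifted mild equation started from the bounded continuous datum $v^\infty_{t_0}$ (justified by the bound $v^\lambda_s \le v^\infty_s \le 2/s$ from \eqref{N0bound1} and monotone/dominated convergence) and then bootstrap to $C^{1,2}$ via standard interior parabolic regularity, while for (ii) you use the vague convergence $v^\lambda_t \stackrel{v}{\to}\lambda 1_{(-\infty,0]}$ plus monotonicity in $\lambda$ for the first clause, and the probabilistic representation \eqref{e_vinftyprob} together with the hitting estimate of Theorem~\ref{th_DIP33}(i) for the second. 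Both routes use the same Marcus--V\'eron uniqueness theorem at the end, so neither is fully self-contained, but yours replaces the stability-theorem machinery with a more elementary limit argument that also makes the two trace conditions in (ii) probabilistically transparent: the first is the singular initial mass on $(-\infty,0]$, the second is the exponential decay of the hitting probability at distance $\ge\eta$ from the support of $X_0$. The trade-off is that the paper's invocation of Theorem~3.10 is quicker and comes prepackaged with the regularity, whereas your version needs the additional (but routine) parabolic-regularity bootstrap to obtain $C^{1,2}$.
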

\begin{proof}
From Proposition \ref{prop_localunif} we have the local uniform convergence of $v^\lambda_t$ to $v^\infty_t$. The family $\{v^\lambda_t\}$ therefore satisfies the conditions of Theorem 3.10 of \cite{MV1999}, which shows that $v^\infty(t,x)$ solves \eqref{e_dualPDEinfty2}. Uniqueness follows by Theorem 3.5 of the same paper. 
\end{proof}

Recall that $G(x) = v^\infty_1(x)$.

\begin{lemma} \label{lemma_Gproperties}
(a) For all $t>0$, $v^\infty_t(x) = t^{-1} G(t^{-1/2} x)$ for all $x \in \R$.\\
(b) $G >0$ and is decreasing. \\
(c) $G \in C^\infty(\R)$ and is the unique positive $C^2$ solution to the ordinary differential equation
\begin{align} \label{ODE_G}
& \, \, \frac 1 2 G''(x) + \frac x 2 G'(x) + G(x) - \frac 1 2 G(x)^2 = 0 
\end{align}  
with boundary conditions $\lim_{x \to \infty} x^2G(x) = 0$, $\lim_{x \to -\infty}G(x) = 2$.\\
(d) There is a constant $c_{\ref{lemma_Gproperties}}$ such that:\\
(i)
\begin{align*}
G(x) &\leq c_{\ref{lemma_Gproperties}} |x|e^{-x^2/2} \,\, \forall \, x >2,
\\0\le 2 - G(x) &\leq c_{\ref{lemma_Gproperties}} |x| e^{-x^2/2} \,\, \forall \, x< -2.
\end{align*}
(ii) $G' \le 0$, and
\begin{align*}
|G'(x)| \leq c_{\ref{lemma_Gproperties}} x^2 e^{-x^2/2} \,\, \forall \, x > 2
\\ |G'(x)| \leq c_{\ref{lemma_Gproperties}} e^{-x^2/2}\,\, \forall \, x \le 0.
\end{align*}

\end{lemma}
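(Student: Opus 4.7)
The proof naturally splits into four parts, all built on the probabilistic representation $G(x) = \N_0(X_1([x,\infty)) > 0)$ from \eqref{e_vinftyprob}, the scaling $v^\infty_t(x) = t^{-1}G(t^{-1/2}x)$ established in the proof of Proposition~\ref{prop_localunif}, and the PDE characterisation of $v^\infty$ in Theorem~\ref{vinfinitypde}. Part (a) is exactly this scaling identity restated with $G := v^\infty_1$. Part (b) is immediate: positivity holds because $G \ge v^\lambda_1$ (by monotonicity in $\lambda$ and passage to the limit) and $v^\lambda_1(x) = \N_0(1 - e^{-\lambda X_1([x,\infty))}) > 0$ from \eqref{e_LapFunCanonlambda2}, while the non-increasing property of $G$ is inherited from the corresponding property of each $v^\lambda_1$ (Lemma~\ref{lemma_monotonicity}) upon passing to the limit.

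For part (c), smoothness is a bootstrap: Theorem~\ref{vinfinitypde} gives $G \in C^2$, after which the ODE in the form $G'' = -xG' - 2G + G^2$ shows inductively that $G \in C^\infty$. The ODE itself is obtained by substituting the scaling form $v^\infty_t(x) = t^{-1}G(t^{-1/2}x)$ into \eqref{e_dualPDEinfty2}(i) and evaluating at $t = 1$. For the boundary conditions, the hitting bound Theorem~\ref{th_DIP33}(i) with $t = 1$, $R = x$ gives $G(x) \le cxe^{-x^2/2}$ for $x > 2$, which is much stronger than $x^2 G(x) \to 0$, while monotone convergence yields $G(x) \uparrow \N_0(X_1(\R) > 0) = 2$ as $x \to -\infty$. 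For uniqueness I would reverse the scaling: any positive $C^2$ solution $\tilde G$ of the ODE with the same boundary conditions defines $\tilde v_t(x) := t^{-1}\tilde G(t^{-1/2}x) \in C^{1,2}((0,\infty)\times\R)$, which satisfies \eqref{e_dualPDEinfty2}(i) by reversing the substitution. The boundary conditions on $\tilde G$ then translate into the initial-trace conditions \eqref{e_dualPDEinfty2}(ii) (the decay $x^2 \tilde G(x) \to 0$ at $+\infty$ controls compact $K \subset (0,\infty)$ via dominated convergence, and $\tilde G \to 2$ at $-\infty$ forces blow-up on any open $U$ meeting $(-\infty,0]$). The uniqueness clause of Theorem~\ref{vinfinitypde} then gives $\tilde G = G$.

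Part (d)(i) for $x > 2$ is exactly Theorem~\ref{th_DIP33}(i) with $t=1,\, R=x$ applied to $G(x) = \N_0(X_1([x,\infty)) > 0)$. For $x < -2$, since $[x,\infty) \subset \R$, I would write
\[ 2 - G(x) = \N_0\bigl(X_1(\R) > 0,\, X_1([x,\infty)) = 0\bigr) \le \N_0\bigl(X_1((-\infty,x)) > 0\bigr), \]
then use the reflection symmetry of $\N_0$ (the involution $y \mapsto -y$) combined with the $\N_0$-a.s.\ continuity of the density of $X_1$ (so $\N_0(X_1(\{-x\}) > 0) = 0$) to identify the right-hand side with $G(-x)$, and conclude with the first bound, noting $-x > 2$.

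Part (d)(ii): $G' \le 0$ follows immediately from (b) together with $G \in C^1$. For the quantitative bounds the key observation is that multiplying the ODE by $e^{x^2/2}$ produces the clean identity
\[ \bigl(e^{x^2/2}G'(x)\bigr)' \;=\; -e^{x^2/2}G(x)\bigl(2 - G(x)\bigr) \;\le\; 0, \]
so $e^{x^2/2}G'(x)$ is non-increasing on $\R$ and non-positive. For $x \le 0$ this alone gives $|e^{x^2/2}G'(x)| \le |G'(0)|$, hence $|G'(x)| \le |G'(0)|\, e^{-x^2/2}$. For $x > 2$, integrating the identity from $2$ to $x$ and using $G(y) \le cye^{-y^2/2}$ from (d)(i) to bound the integrand by $O(y)$ gives $|e^{x^2/2}G'(x)| \le e^2|G'(2)| + cx^2 \le c'x^2$, and hence $|G'(x)| \le c'x^2 e^{-x^2/2}$. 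The only step requiring more than routine care is the uniqueness argument in (c), where the translation of the pointwise boundary conditions on $\tilde G$ into the weak initial-trace conditions of \eqref{e_dualPDEinfty2}(ii) must be done carefully; everything else is a direct consequence of the probabilistic representation, the hitting bound, and the integrating-factor manipulation of the ODE.
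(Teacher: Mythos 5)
Your proposal is correct and follows essentially the same route as the paper: the scaling identity for (a), the probabilistic representation $G(x)=\N_0(X_1([x,\infty))>0)$ for (b) and (d)(i), the reverse-scaling/initial-trace argument for uniqueness in (c), and the integrating factor $e^{x^2/2}$ for (d)(ii). The only (cosmetic) differences are that in (b) your detour through $v^\lambda_1$ adds nothing since positivity of $v^\lambda_1$ already presupposes positivity of $\N_0(X_1([x,\infty))>0)$, and in (d)(ii) for $x\le 0$ you use the monotonicity of $e^{x^2/2}G'(x)$ directly rather than, as the paper does, the sign of the integral term in the integrated identity --- the same observation dressed slightly differently.
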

\begin{proof} (a) is a restatement of \eqref{vinfscaling} and
 (b) is obvious from (\ref{e_vinftyprob}). \\
 
 (c) $G=v^\infty_1$ is $C^2$ by Theorem~\ref{vinfinitypde}.  Noting that $v^\infty_t(x) = t^{-1} G(t^{-1/2} x)$  solves \eqref{e_dualPDEinfty2}(i), we can
use the chain rule to see
\begin{align}
-\frac{1}{t^2} G(t^{-1/2}x) - \frac{1}{2t^2} (t^{-1/2}x) G'(t^{-1/2}x) &= \frac{1}{2 t^{2}} G''(t^{-1/2}x) - \frac{1}{2t^{2}} G(t^{-1/2}x)^2 \nonumber
\\-G(y) - \frac y 2 G'(y) &= \frac 1 2 G''(y) - \frac{1}{2} G(y)^2, \nonumber
\end{align}
This proves that $G$ solves (\ref{ODE_G}).  To see that it is $C^3$, we note that when we solve (\ref{ODE_G}) for $G''$, the expression is differentiable because $G$ and $G'$ are differentiable. Proceeding by induction we see that $G$ is $C^\infty$. The boundary conditions will clearly follow from (d) below. It remains to prove uniqueness. Let $H$ be any positive $C^2$ solution of \eqref{ODE_G} satisfying the given boundary conditions and set $u(t,x)=t^{-1}H(t^{-1/2}x)$. Then, reversing
the above steps one easily sees that $u$ is a $C^2$ solution of \eqref{e_dualPDEinfty2}(i). Let $0<a<b$ and choose $t>0$ small enough so that $y^2H(y)<\epsilon a$ for $y\ge at^{-1/2}$. Then 
\begin{align*}
\int_a^b v(t,x)dx=t^{-1/2}\int_a^b H(t^{-1/2}x)t^{-1/2}dx&\le t^{-1/2}\int_{at^{-1/2}}^\infty H(y)dy\\
&\le \epsilon a t^{-1/2}\int_{a t^{-1/2}}^\infty y^{-2}dy\\
&=\epsilon.
\end{align*}
This proves the second boundary condition in \eqref{e_dualPDEinfty2}(ii).  The first boundary condition is even easier to establish.  So by the uniqueness in Theorem~\ref{vinfinitypde}, $H(x)=u(1,x)=v^\infty(1,x)=G(x)$.\\

 (d)(i)
 To deduce the bound for positive $x$, we note that
\begin{align}
G(x) &= \N_0 \left( X_1([x,\infty))>0\right) \nonumber
\\ &\leq \N_0 \left(X_s([x,\infty))>0 \text{ for some } s \le 1 \right) \nonumber
\\ &\leq c_{\ref{th_DIP33}}|x| e^{-x^2/2}, \nonumber
\end{align}
for all $x > 2$, by Theorem \ref{th_DIP33}(i).  The lower bound on $2-G(x)$ is immediate from
\eqref{N0bound1} (for all $x$).  For $x<-2$, we have
\begin{align} \label{e_Glemma0}
2 - G(x) &= \N_0\left(X_1(1) > 0 \right) - \N_0\left(X_1([x,\infty)) >0 \right) \nonumber
\\ &\leq \N_0 \left( X_1((-\infty,x]) >0 \right)
\\ &\leq c_{\ref{th_DIP33}}|x|e^{-x^2/2}, \nonumber
\end{align}
again using Theorem \ref{th_DIP33}(i) and symmetry.\\

(ii) By (b) $G'\le 0$. Now note that (\ref{ODE_G}) can be rewritten as
\begin{equation} 
\left( e^{x^2/2} G'(x) \right) ' = e^{x^2/2} G(x)(G(x) - 2). \nonumber
\end{equation}
Integrating the above, for $x_0,x\in \R$ we get
\begin{align} \label{e_Glemma1}
G'(x) = e^{-x^2/2} \left[e^{x_0^2/2} G'(x_0) + \int_{x_0}^x e^{y^2/2} G(y) (G(y) - 2)\, dy \right]. 
\end{align}
For $x > x_0\ge 2$, both terms in the above are non-positive, and, if $c$ is the provisional constant arising in (i), we can use part (i) to deduce that
\begin{align} 
|G'(x)| &\leq e^{-x^2/2} \left[ e^{x_0^2/2} |G'(x_0)| + 2c \int_{x_0}^x |y| \, dy \right] \nonumber
\\ &\leq (c_1(x_0) + c_2 x^2) e^{-x^2/2} \leq c'x^2 e^{x^2/2} \nonumber
\end{align}
for $x > 2$. For $x \le 0= x_0$, we note that the integral in (\ref{e_Glemma1}) has its sign reversed, so is positive. Because $G'(x) \le 0$, $|G'(x)|$ is bounded above by the absolute value of the
 first term in \eqref{e_Glemma1}, which gives the required bound.
\end{proof}



Recall from Section~\ref{KOU} that if $G$ is as above, then  $A^G$ is the generator of a killed Ornstein-Uhlenbeck process with killing function $G$, $-\lambda_0^G$ is its lead eigenvalue, and $\psi^G_0$ denotes its corresponding unit eigenfunction in $\cL^2(m)$.
\begin{proposition} \label{prop_leadeig}
For some constant $c_{\ref{prop_leadeig}}>0$, $\psi^G_0(x) = -c_{\ref{prop_leadeig}}e^{x^2/2} G'(x)$ with eigenvalue $-\lambda^G_0 = -1$.
\end{proposition}
\begin{proof}
Recall that $G$ is the $C^\infty$ solution of (\ref{ODE_G}). Rearranging the equation, we can write $G''(x) = -xG'(x) - 2G(x) + G(x)^2$. $G$ is  $C^\infty$, so we can differentiate again to obtain a new ODE.
\begin{align} \label{e_ODE_Gprime}
\frac 1 2 G''' + \frac 1 2 G' + \frac 1 2 xG'' + G' - \frac 1 2 2G G' &= 0 \nonumber
\\ \iff \frac 1 2 G''' + \frac 1 2 xG'' + \frac 3 2 G'  - G G' &= 0
\end{align}
Let $\psi(x) = e^{x^2/2} G'(x)$. Let us first observe that $\psi \in \cL^2(m)$ because
\begin{align}
\int \psi(x)^2 \, dm(x) = (2\pi)^{-1/2} \int G'(x)^2 e^{x^2/2} \, dx< \infty, \nonumber
\end{align}
where the integral converges by Lemma \ref{lemma_Gproperties}(d)(ii). We compute the first and second derivatives of $\psi$:
\[ \psi'(x) = x e^{x^2/2} G'(x) + e^{x^2/2} G''(x)\]
and 
\begin{align*}
\psi''(x) &= x^2 e^{x^2/2} G'(x) + e^{x^2/2} G'(x) +  xe^{x^2/2} G''(x) + xe^{x^2/2}G''(x) + e^{x^2/2} G'''(x)    \nonumber
\\ &= x^2 e^{x^2/2} G'(x) + e^{x^2/2} G'(x) + 2xe^{x^2/2}G''(x) + e^{x^2/2} G'''(x).
\end{align*} 
Using the above, we evaluate $A^G \psi$.
\begin{align*}
A^G\psi &= \frac 1 2 \psi ''(x) - \frac 1 2 x\psi'(x) - \psi(x) G(x)
\\ &= e^{x^2/2} \left[\frac 1 2 x^2  G'(x) +\frac 1 2 G'(x) + xG''(x) + \frac 1 2G'''(x) \right] 
\\& \hspace{4 mm}- e^{x^2/2} \left[ \frac 1 2 x^2 G'(x) + \frac 1 2xG''(x)\right] - e^{x^2/2} \bigg[ G(x) G'(x) \bigg]
\\ &= e^{x^2/2}\left[ \frac 1 2 G'''(x) + \frac 1 2 xG''(x) + \frac 1 2 G'(x) - G(x) G'(x) \right]
\\ &= e^{x^2/2}\left[ \frac 1 2 G'''(x) + \frac 1 2 xG''(x) + \frac 3 2 G'(x) - G(x) G'(x) \right] - e^{x^2/2} G'(x)
\\ &= - \psi(x),
\end{align*}
where the last equality is due to (\ref{e_ODE_Gprime}). Moreover, $G'(x) \le0$ for all $x$, so $-e^{x^2/2}G'(x) \ge 0$, and we have already seen that it is in $\cL^2(m)$. Therefore $\psi$ is a non-positive eigenfunction of $A^G$ with eigenvalue $-1$.  Clearly $\psi$ cannot be orthogonal to the lead eigenfunction $\psi^G_0>0$ (recall Theorem~\ref{thm_OU}(a)). It follows that $\psi_0^G=-c_{\ref{prop_leadeig}}\psi$ for some normalizing constant $c_{\ref{prop_leadeig}}>0$ and hence the corresponding lead eigenvalue is $-1$.
\end{proof}

The next result gives a bound on the left tail of the distribution of $X_t([x,\infty))$ which will play an important
role in the proof of Proposition~\ref{prop_boundaryGrowthBd}.  We do not know what the ``correct" power law behaviour is, but
see the Remark at the end of this section for a possible answer. 

\begin{proposition}\label{new3.6}
For $0<p<1/6$ and $t>0$ there is a constant $C_{\ref{new3.6}}=C_{\ref{new3.6}}(p,t)$ such that 
\begin{equation*} P^X_{\delta_0}\Bigl(0<X_t([x,\infty))\le \frac{1}{\lambda}\Bigr)\le C_{\ref{new3.6}}\lambda^{-p}\quad \text{for all }x\in\R \text{ and }\lambda>0.\end{equation*}
\end{proposition}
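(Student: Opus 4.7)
I would first reduce the problem from $P^X_{\delta_0}$ to $\N_0$ via the Poisson decomposition, then apply Markov's inequality and scaling, and finally bound the resulting quantity by analyzing a semilinear PDE via the Feynman--Kac formula for the $G$-killed Ornstein--Uhlenbeck process, leveraging $\lambda_0^G=1$ from Proposition~\ref{prop_leadeig}.

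Setting $Z:=X_t([x,\infty))$, the cluster decomposition \eqref{e_clusterdecomp1} writes $Z$ under $P^X_{\delta_0}$ as $\sum_{i=1}^M Z_i$, with $M$ Poisson of mean $v^\infty_t(x)$ and the $Z_i>0$ iid with law $\N_0(Z\in\cdot\mid Z>0)$. On $\{0<\sum Z_i\le 1/\lambda\}$ each summand lies in $(0,1/\lambda]$, so a direct Poisson computation gives $P^X_{\delta_0}(0<Z\le 1/\lambda)\le \N_0(0<Z\le 1/\lambda)$. Applying $\mathbf{1}_{\{0<z\le 1/\lambda\}}\le e\cdot e^{-\lambda z}\mathbf{1}_{\{z>0\}}$ and \eqref{e_LapFunCanonlambda2} yields $\N_0(0<Z\le 1/\lambda)\le e(v^\infty_t(x)-v^\lambda_t(x))$. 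By the scaling \eqref{scale3} and Lemma~\ref{lemma_Gproperties}(a), this reduces the task to showing: for every $p<1/6$, there is $C(p)$ such that $G(y)-v^\mu_1(y)\le C(p)\mu^{-p}$ uniformly in $y\in\R$ and $\mu\ge 1$.

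Set $W(s,y):=G(y)-v^{e^s}_1(y)\ge 0$. The scaling $v^{e^s}_1(y)=e^s v^1(e^s,e^{s/2}y)$ combined with the PDE \eqref{e_dualPDElambda2} and the ODE \eqref{ODE_G} (so that $G$ is a stationary solution) yields
\[\partial_s W=\tfrac12 W_{yy}+\tfrac{y}{2}W_y+(1-G)W+\tfrac12 W^2,\qquad W(0,\cdot)\le 2,\]
and it suffices to prove $W(s,y)\le C(p)e^{-ps}$. Under the substitution $W=e^{-y^2/2}u$ one checks (using $\bar A(e^{-y^2/2}u)=e^{-y^2/2}(Au-u/2)$ with $\bar A\phi=\phi''/2+y\phi'/2$) that
\[\partial_s u=A^G u+\tfrac{u}{2}+\tfrac12 u^2 e^{-y^2/2},\]
whose linear part admits the Feynman--Kac representation $u_L(s,y)=e^{s/2}E^Y_y[u(0,Y_s)\exp(-\int_0^s G(Y_r)\,dr)]$ in terms of the OU process of Section~\ref{KOU}. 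Splitting the expectation on $\{Y_s\ge 0\}$ (where Lemma~\ref{lemma_Gproperties}(d)(i) gives $u(0,Y_s)\le c(1+|Y_s|)$ with at most polynomial growth) and $\{Y_s<0\}$ (where $u(0,Y_s)\le 2e^{Y_s^2/2}$ can be large, but $\int_0^s G(Y_r)\,dr$ is correspondingly large since $G\to 2$ at $-\infty$), applying H\"older's inequality with conjugate exponents $p',q'$, and using the survival bound \eqref{OU_survivalprobbd} together with the monotonicity $\lambda_0^{q'G}\ge\lambda_0^G=1$ for $q'\ge 1$, produces a linear decay of order $e^{-(\lambda_0^{q'G}/q'-1/2)s}\ge e^{-(1/q'-1/2)s}$.

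The nonlinear contribution $\tfrac12 u^2 e^{-y^2/2}=\tfrac12 Wu$ is controlled via Duhamel's formula and a bootstrap: starting from the crude bound $W\le 2$, each iterate uses the currently available decay of $W$ to dominate the quadratic source in Duhamel, and self-consistency between the linear rate $1/q'-1/2$ and the nonlinear correction forces $q'$ to lie just above $3/2$, yielding the asserted $p<1/6$ after optimization. For large $|y|$ the bound is automatic, since $W\le G$ is then exponentially small by Lemma~\ref{lemma_Gproperties}(d)(i). The main technical obstacle is that the initial data $u(0,\cdot)$ of the transformed equation is not in $\cL^2(m)$: it grows like $e^{y^2/2}$ as $y\to-\infty$ because $W(0,y)\to 4/3>0$ (since $v^1_1(y)\to 2/3<2=\lim_{y\to-\infty}G(y)$), which blocks a clean spectral decomposition and forces the splitting/H\"older argument, ultimately degrading the naive $1/2$ decay rate from the leading eigenvalue down to the $1/6$ threshold appearing in the statement.
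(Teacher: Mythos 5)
Your reduction from $P^X_{\delta_0}$ to $\N_0$ via the cluster decomposition, the Markov inequality step giving $\N_0(0<Z\le 1/\lambda)\le e(v^\infty_t(x)-v^\lambda_t(x))$, and the scaling reduction to $G-v^\mu_1 \le C(p)\mu^{-p}$ are all correct.  The derivation of the equation $\partial_s W = \tfrac12 W_{yy}+\tfrac y2 W_y+(1-G)W+\tfrac12 W^2$ and the $h$-transform to $\partial_s u = A^G u+\tfrac12 u+\tfrac12 e^{-y^2/2}u^2$ are also correct.  But note that your strategy inverts the paper's logical order: the paper proves Proposition~\ref{new3.6} first, by a purely probabilistic argument, and then obtains the rate bound $v^\infty_t - v^\lambda_t \le C(p)t^{-p-1}\lambda^{-p}$ (Proposition~\ref{prop_vlambdaROC}) as a consequence, exactly by the chain of inequalities you write down.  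You are instead proposing to prove Proposition~\ref{prop_vlambdaROC} directly via the PDE, and then read off Proposition~\ref{new3.6}.  That is a legitimate architecture if the PDE estimate can be pushed through.

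The gap is in the Feynman--Kac estimate on $\{Y_s<0\}$.  There $u(0,Y_s)\le 2e^{Y_s^2/2}$, and you propose H\"older with conjugate exponents $p',q'$:
\[
E^Y_y\bigl[e^{Y_s^2/2}\,e^{-\int_0^s G(Y_r)\,dr}\,1(Y_s<0)\bigr]\le E^Y_y\bigl[e^{p'Y_s^2/2}1(Y_s<0)\bigr]^{1/p'}\,E^Y_y\bigl[e^{-q'\int_0^s G}\bigr]^{1/q'}.
\]
However, for the OU process with $Y_0=y$ the marginal $Y_s\sim N(ye^{-s/2},1-e^{-s})$ has $E[e^{aY_s^2}]<\infty$ iff $2a(1-e^{-s})<1$, so $E[e^{p'Y_s^2/2}]=\infty$ for every $p'>1$ once $s>\log\frac{p'}{p'-1}$.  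Thus for any fixed conjugate pair $(p',q')$ with $p'>1$ the first factor diverges for all sufficiently large $s$, and the H\"older bound you quote is vacuous exactly in the regime (large $s$, i.e.\ large $\mu$) that matters.  To rescue the argument you would have to exploit the \emph{joint} behaviour of $Y_s$ and $\int_0^s G(Y_r)\,dr$ --- e.g.\ lower-bounding the occupation time below $0$ on $\{Y_s<0\}$ and pairing it against the $e^{Y_s^2/2}$ blow-up --- rather than splitting the two factors by H\"older.  Without this, the claimed rate $e^{-(1/q'-1/2)s}$ and the ``self-consistency forcing $q'\approx 3/2$'' conclusion do not follow.  The paper sidesteps the issue entirely: its proof of Proposition~\ref{new3.6} never touches the PDE, but instead works directly with the historical process, using the modulus of continuity~\eqref{Hmod}--\eqref{rhoasymp}, a case split on whether $x-\tau(\lambda)$ is bigger or smaller than $\lambda^{-1/6}$, Feller-diffusion extinction formulas, and the Lipschitz bound~\eqref{dvinftybnd} on $v^\infty_t$.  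That probabilistic route is what produces the concrete (and admittedly non-sharp) threshold $p<1/6$.
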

\begin{proof} It clearly suffices to consider $\lambda\ge 1$, which is assumed until otherwise indicated. Let $0<p<1/6$ and $\veps=\veps(p)\in(0,1/6-p)$.  Assume we are working under a probability, $P$, for which $H$ is a historical process defining the super-Brownian motion $X$ starting at $\delta_0$ and let $\cF_t$ be the right-continuous completed filtration generated by $H$. $E$ will denote expectation with respect to $P$.
Recall $h(r)$, $\rho(c)$ and $\delta(t,c)$ are as in \eqref{Hmod} and \eqref{deltarate}. By \eqref{rhoasymp} we may choose $c=c(p)$ large enough
so that $\rho(c)\veps\ge p$ and so by \eqref{deltarate},
\begin{equation}\label{modcontbound}
P(\delta(t+1,c)\le \lambda^{-\veps})\le C_{\ref{deltarate}}(t+1)\lambda^{-\veps\rho(c)}\le C_{\ref{deltarate}}(t+1)\lambda^{-p}\text{ for all }\lambda\ge 1.
\end{equation}
By (II.5.11) and (II.5.12) of \cite{P2002},
\begin{equation}\label{1aa}
E^X_{X_0}(e^{-\lambda X_t(1)})=\exp\Bigl(\frac{-2\lambda X_0(1)}{2+\lambda t}\Bigr);\quad P_{X_0}^X(X_t(1)=0)=\exp(-2X_0(1)/t),
\end{equation}
and so for $\lambda\ge 1$,
\begin{align}\label{smallmassbound}
\nonumber P(0<X_t(1)\le \lambda^{-1/6})&\le eE(1(X_t(1)>0)\exp(-\lambda^{1/6}X_t(1)))\\
\nonumber &=e\Bigl[\exp\Bigl(\frac{-2\lambda^{1/6}}{2+\lambda^{1/6}t}\Bigr)-\exp(-2/t)\Bigr]\\
\nonumber &\le 2e\Bigl[\frac{1}{t}-\frac{\lambda^{1/6}}{2+\lambda^{1/6}t}\Bigr]\\
&\le 4et^{-2}\lambda^{-1/6}.
\end{align}
Let 
\[E=E_{x,\lambda}=\{0<X_t([x,\infty))\le 1/\lambda,\ \delta(t+1,c)>\lambda^{-\veps}, X_t(1)>\lambda^{-1/6}\}.\]
Then by \eqref{modcontbound} and \eqref{smallmassbound} it suffices to show
\begin{equation}\label{Ebound}
P(E_{x,\lambda})\le C(t,p)\lambda^{-p}\quad\text{ for all $x\in\R$ and $\lambda\ge 1$}.
\end{equation}

Assume for now that $x\in\R$ and $\lambda\ge 1$.  Note that if $\tau(\lambda)=\tau^{\lambda^{-2/3}}(t)$ (recall $\tau^\veps$ is as in \eqref{def_taueps}), then 
\begin{equation}\label{tauonE} \text{on }E_{x,\lambda}\text{ we have, } -\infty<\tau(\lambda)<x\text{ and }X_t([\tau(\lambda),\infty))=\lambda^{-2/3}.
\end{equation}
Introduce
\[E^1_{x,\lambda}=E\cap\{x-\tau(\lambda)\ge \lambda^{-1/6}\}\text{ and }E^2_{x,\lambda}=E\cap\{x-\tau(\lambda)< \lambda^{-1/6}\}.\]
We consider $E^1$ first.  Set 
\[\beta=\frac{1}{3}+\veps.\]
Then there is a $\underline\lambda=\underline\lambda(c,\veps,t)=\underline\lambda(p,t)\ge 1$ such that 
\begin{equation}\label{incbound}
2ch(\lambda^{-\beta})<\lambda^{-1/6}\text{ and }\lambda^{-\beta}<t/2\text{ for }\lambda\ge \underline\lambda.
\end{equation}
Until otherwise indicated we will assume now that $\lambda\ge\underline\lambda$. Define 
\[\zeta_{t-\lambda^{-\beta}}=\inf\{s\ge 0:H_{s+t-\lambda^{-\beta}}(\{y:y_{t-\lambda^{-\beta}}\ge x-ch(\lambda^{-\beta})\})=0\}.\]
It follows from \eqref{histcondlaw} that for $u=0$ or $\lambda^{-\beta}$,
\begin{align}\label{Fellerdiff}
&\text{conditional on $\cF_{t-u}$, $Z_s=H_{t-u+s}(\{y:y_{t-\lambda^{-\beta}}\ge x-ch(\lambda^{-\beta})\})$ $(s\ge 0)$ is equal in law to}\\
\nonumber&\text{the Feller diffusion $(X_s(1),s\ge 0)$ starting at $H_{t-u}(\{y:y_{t-\lambda^{-\beta}}\ge x-ch(\lambda^{-\beta})\})$.}
\end{align}
Throughout this proof we will assume the Feller diffusion $X_s(1)$ starts at $x_0\ge 0$ under $P_{x_0}$. 
On $E^1$ we have \break
$\lambda^{-\beta}\le \lambda^{-\veps}<\delta(t+1,c)$ and so by the modulus of continuity \eqref{Hmod},
\[H_t(\{y:y_{t-\lambda^{-\beta}}\ge x-ch(\lambda^{-\beta})\})\ge H_t(\{y:y_t\ge x\})=X_t([x,\infty))>0.\]
This implies that (use \eqref{Fellerdiff} with $u=\lambda^{-\beta}$ to see that $Z_s$ sticks at zero when it hits zero) 
\begin{equation}\label{zetaone}
\zeta_{t-\lambda^{-\beta}}>\lambda^{-\beta}\text{ on }E^1_{x,\lambda}.
\end{equation}
Now again use the modulus of continuity and then \eqref{incbound} to that on $E^1$, 
\begin{align*}
H_t(\{y:y_{t-\lambda^{-\beta}}\ge x-ch(\lambda^{-\beta})\})&\le H_t(\{y:y_t\ge x-2ch(\lambda^{-\beta})\})\\
&\le X_t([x-\lambda^{-1/6},\infty))\quad\text{(by \eqref{incbound})}\\
&\le X_t([\tau(\lambda),\infty))\qquad\text{(since $x-\tau(\lambda)\ge \lambda^{-1/6}$ on $E^1$)}\\
&=\lambda^{-2/3},
\end{align*}
the last by \eqref{tauonE}.  Use the above fact that $H_t(\{y:y_{t-\lambda^{-\beta}}\ge x-ch(\lambda^{-\beta})\})\le \lambda^{-2/3}$ on $E^1$ and condition on $\cF_t$ (recall
 \eqref{Fellerdiff} with $u=0$) to conclude that 
\begin{align}\label{E1bound1}
P(E^1\cap\{\zeta_{t-\lambda^{-\beta}}>\lambda^{-\beta}+\lambda^{-1/2}\})&\le E(1(H_t(\{y:y_{t-\lambda^{-\beta}}\ge x-ch(\lambda^{-\beta}))\le \lambda^{-2/3})P(Z_{t+\lambda^{-1/2}}>0|\cF_t))\\
\nonumber&\le P_{\lambda^{-2/3}}(X_{\lambda^{-1/2}}(1)>0)\\
\nonumber&=1-\exp\Bigl(-\frac{2\lambda^{-2/3}}{\lambda^{-1/2}}\Bigr)\qquad\text{(by \eqref{1aa})}\\
\nonumber&\le 2\lambda^{-1/6}.
\end{align}
So \eqref{zetaone} and \eqref{E1bound1} show that 
\begin{equation}\label{E1bound2}P(E^1\cap\{\zeta_{t-\lambda^{-\beta}}\notin[\lambda^{-\beta},\lambda^{-\beta}+\lambda^{-1/2}]\})\le 2\lambda^{-1/6}\text{ for }\lambda\ge \underline\lambda(p,t).
\end{equation}
Let 
\[M(\omega)=X_{t-\lambda^{-\beta}}([x-ch(\lambda^{-\beta}),\infty)).\]
If $\zeta=\inf\{s\ge 0:X_s(1)=0\}$ is the lifetime of the Feller diffusion $X_s(1)$, then we may apply \eqref{Fellerdiff} with $u=\lambda^{-\beta}$ to see that 
\begin{align}\label{E1bound3}
P(\zeta_{t-\lambda^{-\beta}}\in[\lambda^{-\beta},\lambda^{-\beta}+\lambda^{-1/2}])&=E(E_M(\zeta\in[\lambda^{-\beta},\lambda^{-\beta}+\lambda^{-1/2}]))\\
\nonumber&=E(P_M(X_{\lambda^{-\beta}+\lambda^{-1/2}}(1)=0)-P_M(X_{\lambda^{-\beta}}(1)=0))\\
\nonumber&=E\Bigl(\exp\Bigl(\frac{-2M}{\lambda^{-\beta}+\lambda^{-1/2}}\Bigr)-\exp\Bigl(\frac{-2M}{\lambda^{-\beta}}\Bigr)\Bigr)\ \ \quad\text{(by \eqref{1aa})}\\
\nonumber&\le E\Bigl(\exp(-2M/(\lambda^{-\beta}+\lambda^{-1/2}))2M\Bigr)[1/\lambda^{-\beta}-1/(\lambda^{-\beta}+\lambda^{-1/2})]\\
\nonumber&=E\Bigl(\exp(-2M/(\lambda^{-\beta}+\lambda^{-1/2}))2M/(\lambda^{-\beta}+\lambda^{-1/2})\Bigr)[\lambda^{-1/2}/\lambda^{-\beta}]\\
\nonumber&\le \lambda^{-((1/2)-\beta)}=\lambda^{-1/6+\veps}\le \lambda^{-p},
\end{align}
where we have used $\sup_{x\ge 0}xe^{-x}=e^{-1}\le 1$ in the last line. Combining \eqref{E1bound2} and \eqref{E1bound3} we arrive at 
\[P(E^1_{x,\lambda})\le 3\lambda^{-p}\quad\text{for all }\lambda\ge \underline\lambda(p,t),\ \  x\in\R.\]
This then implies that for some $c_{\ref{E1bound4}}=c_{\ref{E1bound4}}(p,t)$,
\begin{equation}\label{E1bound4}
P(E^1_{x,\lambda})\le c_{\ref{E1bound4}}(p,t)\lambda^{-p}\quad\text{for all }\lambda\ge 1, \ \ x\in\R.
\end{equation}

Consider next $E^2=E^2_{x,\lambda}$ where for now $\lambda\ge 1$ and of course $x\in\R$.  Recall that $U_s=\sup(S(X_s))$.  On $E^2$, we have $\lambda^{-5/6}\le \lambda^{-\veps}\le \delta(t+1,c)$ and so by the modulus of continuity \eqref{Hmod},
\begin{align*}
P(E^2\cap\{U_{t+\lambda^{-5/6}}\ge x+ch(\lambda^{-5/6})\})&\le P(E^2\cap\{H_{t+\lambda^{-5/6}}(\{y:y_t\ge x\})>0\})\\
&\le P_{\lambda^{-1}}(X_{\lambda^{-5/6}}(1)>0),
\end{align*}
where we have used \eqref{histcondlaw} with $\delta=0$, and $H_t(\{y:y_t\ge x\})\le 1/\lambda$ on $E$ in the last line.  Now use \eqref{1aa} to see that the above equals $1-\exp(-2\lambda^{-1}\lambda^{5/6})\le 2\lambda^{-1/6}$,
and so conclude that
\begin{equation}\label{E2bound1}
P(E^2\cap \{U_{t+\lambda^{-5/6}}\ge x+ch(\lambda^{-5/6})\})\le 2\lambda^{-1/6}\text{ for all }\lambda\ge 1, x\in\R.
\end{equation}
The modulus of continuity also implies
\begin{align*}
P&(E^2\cap\{U_{t+\lambda^{-5/6}}\le x-\lambda^{-1/6}-ch(\lambda^{-5/6})\})\\
&\le P(E^2\cap\{H_{t+\lambda^{-5/6}}(\{y:y_t\ge x-\lambda^{-1/6}\})=0\})\\
&\le P(E^2\cap\{H_{t+\lambda^{-5/6}}(\{y:y_t\ge \tau(\lambda)\})=0\})\quad\text{(recall $x-\tau(\lambda)<\lambda^{-1/6}$ on $E^2$)}\\
&=P_{\lambda^{-2/3}}(X_{\lambda^{-5/6}}(1)=0)\qquad\quad\text{(by\eqref{histcondlaw} with $\delta=0$, and \eqref{tauonE})}\\
&=\exp\Bigl(\frac{-2\lambda^{-2/3}}{\lambda^{-5/6}}\Bigr)=\exp(-2\lambda^{1/6})\le \lambda^{-1/6},
\end{align*}
the last since $\lambda\ge 1$.  The above inequality and \eqref{E2bound1} imply that 
\begin{equation}\label{E2bound2}
P(E^2\cap\{U_{t+\lambda^{-5/6}}\notin(x-\lambda^{-1/6}-ch(\lambda^{-5/6}),x+ch(\lambda^{-5/6}))\})\le 3\lambda^{-1/6}\ \ \forall \lambda\ge 1, x\in\R.
\end{equation}
Differentiate both sides of the scaling relationship in Lemma~\ref{lemma_Gproperties}(a) and so get
\begin{equation}\label{dvinftybnd}
\Bigl|\frac{\partial}{\partial x}v^\infty(t,x)\Bigr|\le t^{-3/2}\Vert G'\Vert_\infty.
\end{equation}
If $t'=t+\lambda^{-5/6}$, $x_1=x-\lambda^{-1/6}-ch(\lambda^{-5/6})$, $x_2=x+ch(\lambda^{-5/6})$, and $\lambda\ge \underline\lambda(p,t)$, then 
\begin{align*}P(U_{t'}\in(x_1,x_2])&=P(X_{t'}([x_2,\infty))=0)-P(X_{t'}([x_1,\infty))=0)\\
&=e^{-v^\infty_{t'}(x_2)}-e^{-v^\infty_{t'}(x_1)}\quad\text{(by \eqref{e_vinftyprob})}\\
&\le t^{-3/2}\Vert G'\Vert_\infty(x_2-x_1)\qquad\text{(by \eqref{dvinftybnd})}\\
&= t^{-3/2}\Vert G'\Vert_\infty(\lambda^{-1/6}+2ch(\lambda^{-5/6}))\\
&\le 2t^{-3/2}\Vert G'\Vert_\infty\lambda^{-1/6},
\end{align*}
where in the last line we used \eqref{incbound}. 
The above, together with \eqref{E2bound2}, implies that 
\[P(E^2_{x,\lambda})\le (2t^{-3/2}\Vert G'\Vert_\infty+3)\lambda^{-1/6}\quad\text{ for all } x\in\R, \lambda\ge\underline\lambda(p,t).\]
This in turn shows that for some $c_{\ref{E2bound3}}(p,t)$,
\begin{equation}\label{E2bound3}
P(E^2_{x,\lambda})\le c_{\ref{E2bound3}}(p,t)\lambda^{-1/6}\text{ for all }x\in\R, \lambda\ge 1.
\end{equation}
Combining \eqref{E1bound4} and \eqref{E2bound3}, we derive \eqref{Ebound}, as required. 
\end{proof}

An easy consequence of the above is a rate of convergence of $v^\lambda$ to $v^\infty$ as $\lambda\to\infty$.  This will play an important role in the proof of Theorem~\ref{thm_01-law} given
in the next section.

\begin{proposition}\label{prop_vlambdaROC}
For any $0<p<1/6$ there is a $C_{\ref{prop_vlambdaROC}}(p)$ such that 
\[\sup_x|v^\infty_t(x)-v^\lambda_t(x)|\le C_{\ref{prop_vlambdaROC}}(p)t^{-p-1}\lambda^{-p}\quad\text{for all }\lambda,t>0.\]
\end{proposition}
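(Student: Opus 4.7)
The plan is to reduce to the case $t=1$ by scaling, then convert the difference $v^\infty - v^\lambda$ into a quantity that can be estimated using Proposition~\ref{new3.6}.

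\textbf{Step 1: Reduction to $t=1$ via scaling.} The scaling relation \eqref{scale3} gives
\[
v^\infty(t,x) - v^\lambda(t,x) = t^{-1}\bigl[v^\infty(1,t^{-1/2}x) - v^{\lambda t}(1,t^{-1/2}x)\bigr],
\]
so $\sup_x|v^\infty_t(x) - v^\lambda_t(x)| = t^{-1}\sup_y|v^\infty_1(y) - v^{\lambda t}_1(y)|$. Hence it suffices to show that for every $0<p<1/6$,
\[
\sup_y |v^\infty_1(y) - v^\mu_1(y)| \le C(p)\,\mu^{-p} \quad \text{for all } \mu>0,
\]
since plugging in $\mu = \lambda t$ recovers the stated $t^{-p-1}\lambda^{-p}$ bound.

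\textbf{Step 2: Rewriting the difference.} Using the Laplace transform identities \eqref{e_LapFunlambda2} and \eqref{e_vinftyprob}, for any $y \in \R$,
\[
e^{-v^\mu_1(y)} - e^{-v^\infty_1(y)} = E^X_{\delta_0}\!\bigl(e^{-\mu X_1([y,\infty))}\bigr) - P^X_{\delta_0}\bigl(X_1([y,\infty))=0\bigr) = E^X_{\delta_0}\!\bigl(e^{-\mu X_1([y,\infty))}1_{\{X_1([y,\infty))>0\}}\bigr).
\]
Since $v^\mu_1 \le v^\infty_1 \le 2$ (by \eqref{N0bound1}), the elementary inequality $e^b - e^a \ge (b-a)e^a$ for $a\le b$ yields
\[
v^\infty_1(y) - v^\mu_1(y) \le e^{v^\infty_1(y)}\bigl(e^{-v^\mu_1(y)} - e^{-v^\infty_1(y)}\bigr) \le e^2\, E^X_{\delta_0}\!\bigl(e^{-\mu X_1([y,\infty))}1_{\{X_1([y,\infty))>0\}}\bigr).
\]

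\textbf{Step 3: Bounding the expectation via Proposition~\ref{new3.6}.} Fix $p' \in (p, 1/6)$ and set $\alpha = p/p' \in (0,1)$. For $\mu \ge 1$, split the expectation at threshold $\mu^{-\alpha}$:
\[
E^X_{\delta_0}\!\bigl(e^{-\mu X_1([y,\infty))}1_{\{X_1([y,\infty))>0\}}\bigr) \le P^X_{\delta_0}\!\bigl(0 < X_1([y,\infty)) \le \mu^{-\alpha}\bigr) + e^{-\mu^{1-\alpha}}.
\]
By Proposition~\ref{new3.6} applied at exponent $p'$ and with parameter $\mu^\alpha$, the first term is at most $C_{\ref{new3.6}}(p',1)\,\mu^{-\alpha p'} = C\mu^{-p}$; the second is super-polynomially small in $\mu$ and hence dominated by $\mu^{-p}$ for all $\mu \ge 1$. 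For $0<\mu<1$ we invoke the trivial bound $v^\infty_1-v^\mu_1 \le v^\infty_1 \le 2 \le 2\mu^{-p}$. Combining yields $\sup_y|v^\infty_1(y)-v^\mu_1(y)| \le C(p)\mu^{-p}$ for all $\mu>0$, and the scaling reduction of Step 1 completes the proof.

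The main (and only nontrivial) ingredient is the application of Proposition~\ref{new3.6}; everything else is routine scaling and an elementary comparison of the two Laplace transforms. The restriction $p < 1/6$ in the conclusion simply inherits the same restriction from Proposition~\ref{new3.6}.
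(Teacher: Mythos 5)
Your proof is correct, and it is essentially the paper's argument: both reduce to $t=1$ via the scaling relation \eqref{scale3}, both rewrite $e^{-v^\lambda_1}-e^{-v^\infty_1}$ as $E^X_{\delta_0}(e^{-\mu X_1([y,\infty))}1_{\{X_1([y,\infty))>0\}})$, both pass from this to $v^\infty_1-v^\mu_1$ using $v^\infty_1\le 2$, and both hinge on Proposition~\ref{new3.6}. The only difference is a minor technical one in the last step: the paper writes $e^{-\mu X}=\int_X^\infty\mu e^{-\mu u}\,du$ and integrates the tail bound directly, obtaining $\Gamma(p+1)C_{\ref{new3.6}}(p,1)\mu^{-p}$ with no auxiliary exponent, whereas you split at the threshold $\mu^{-\alpha}$ and need to introduce $p'\in(p,1/6)$; both give the same range $0<p<1/6$ and the same conclusion.
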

\begin{proof} Let $0<p<1/6$.  By \eqref{e_LapFunlambda2} and \eqref{e_vinftyprob},
\begin{align}\label{explb}
e^{-v_t^\lambda(x)}-e^{-v_\infty^\lambda(x)}&=E^X_{\delta_0}(e^{-\lambda X_t([x,\infty))}1(X_t([x,\infty))>0))\\
\nonumber&=E^X_{\delta_0}\Bigl(\int_0^\infty1(0<X_t([x,\infty))\le u)e^{-\lambda u}\lambda du\Bigr)\\
\nonumber&\le C_{\ref{new3.6}}(p,t)\int_0^\infty u^pe^{-\lambda u}\lambda du\quad\text{(Proposition~\ref{new3.6})}\\
\nonumber&=\Gamma(p+1)C_{\ref{new3.6}}(p,t) \lambda^{-p}.
\end{align}
Recalling from \eqref{N0bound1} that $v^\infty_t(x)\le 2/t$, we also have 
\begin{align}\label{expub}
e^{-v_t^\lambda(x)}-e^{-v_\infty^\lambda(x)}&\ge e^{-v^\infty_t(x)}(v^\infty_t(x)-v^\lambda_t(x))\ge e^{-2/t}(v^\infty_t(x)-v^\lambda_t(x)).
\end{align}
Combine \eqref{explb} and \eqref{expub} and set $t=1$ to see that 
\[\sup_x|v^\infty_1(x)-v^\lambda_1(x)|\le e^2\Gamma(p+1)C_{\ref{new3.6}}(p,1) \lambda^{-p}.\]
The required relation is now immediate from the scaling relations \eqref{scale3} and Lemma~\ref{lemma_Gproperties}(a). 
\end{proof}
\begin{remark} We do not believe $p=1/6$ is sharp in any way. 
Theorem~1.5 of \cite{MMP17} studies solutions of 
\[\frac{\partial u}{\partial t}=\frac{1}{2}\frac{\partial^2u}{\partial x^2}-\frac{u^2}{2},\qquad u_0=\lambda\delta_0.\]
In particular this paper shows (via a Feynman-Kac argument) that for some $0<\underline C(K)\le \overline C<\infty$,
\[\underline C(K)t^{-(1/2)-\lambda_0}\lambda^{-(2\lambda_0-1)}\le u^\infty_t(x)-u^\lambda_t(x)\le \overline Ct^{-(1/2)-\lambda_0}\lambda^{-(2\lambda_0-1)},\]
where $\lambda_0=\lambda_0^F$ (as in Theorem~\ref{dimwp1}) and the lower bound is valid for $\lambda\ge 
t^{-1/2}$ and $|x|\le K\sqrt t$. So naively changing $u^\lambda$ to $v^\lambda$ leads to replacing $F=u^\infty_1$ with $G=v^\infty_1$, and one might think that (the $t$ dependence is by scaling \eqref{scale3})
\begin{equation}\label{vrateconj}
v_t^\infty(x)-v^\lambda_t(x)\approx Ct^{-2\lambda_0^G}\lambda^{-(2\lambda_0^G-1)}=Ct^{-2}\lambda^{-1}\ \text{ as }\lambda\to\infty,
\end{equation}
where $\approx$ means bounded below and above for perhaps differing positive constants $C$. 
This rate does hold if $x=-\infty$, where (by \eqref{1aa} and \eqref{e_vinftyprob})
\[v_t^\infty(-\infty)-v^\lambda_t(-\infty)=\frac{2}{t}-\frac{2\lambda}{2+\lambda t}\sim 4t^{-2}\lambda^{-1}\ \text{ as }\lambda\to\infty.\]
However the proof in \cite{MMP17} relies on the scaling of $u^\lambda$, which differs from that of $v^\lambda$. Moreover there is some evidence that the convergence when $x\gg0$ is slower.  In fact a heuristic
argument suggests that the correct rate at $+\infty$ is given by $p=G(0)-1\in(0,1)$.  The last upper bound is obvious because $G(0) < G(-\infty)=2$. For the lower bound on $G(0)$, note that by \eqref{e_vinftyprob} we have $G(0) = \N_0 ( \{X_1([0,\infty)) > 0\})$, so by symmetry, 
\[2G(0) - \N_0(\{X_1([0,\infty))>0 \} \cap \{X_1((-\infty,0]) > 0 \}) = \N_0(\{X_1(1) > 0\}) = 2,\] 
thus implying that $G(0)>1$.
\end{remark}

\section{Proof of Theorem \ref{thm_01-law}}\label{secthm01}
We first establish a lower bound on the probability that $L_t$ has positive mass at distances of order $\sqrt t$ away from zero under canonical measure. This follows readily from moment calculations in \cite{H2018}.
\begin{lemma} \label{lemma_Ltposhalfline}
There is a finite constant $C_{\ref{lemma_Ltposhalfline}}$ and for all $k\ge0$, positive constants $ c_{\ref{lemma_Ltposhalfline}}(k)$,  such that for all $t > 0$ and $k\ge 0$, 
\begin{equation}\label{Lsquare} \N_0 ( L_t([k \sqrt t, \infty))^2 ) \leq \N_0 ( L_t(\R)^2 )\le C_{\ref{lemma_Ltposhalfline}}\, t^{1-2\lambda_0},\end{equation}
and
\begin{equation}\label{Lposlb}\N_0 ( \{ L_t([k\sqrt t,\infty)) >0 \} \big| X_t > 0 ) > c_{\ref{lemma_Ltposhalfline}}(k).\end{equation}
\end{lemma}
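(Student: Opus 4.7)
The first inequality of \eqref{Lsquare} is immediate from monotonicity of the non-negative measure $L_t$, and the second is precisely Theorem A(d) with $C_{\ref{lemma_Ltposhalfline}} := C_B$. The real content is therefore \eqref{Lposlb}, which I would establish by a conditional second moment (Paley--Zygmund) argument.

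First, note that $\N_0(X_t > 0) = 2/t < \infty$ by \eqref{N0bound1}, so the conditional measure $\N_0(\,\cdot\,\mid X_t > 0)$ is a genuine probability. Applying Theorem A(b) with $\phi = 1_{[k\sqrt{t},\infty)}$, and using that $\phi(\sqrt{t}\,z) = 1_{[k,\infty)}(z)$, gives
\[
\N_0\bigl( L_t([k\sqrt{t},\infty)) \bigr) \;=\; C_A\, t^{-\lambda_0}\, \alpha(k), \qquad \alpha(k) \;:=\; \int_k^\infty \psi_0^F(z)\, dm(z) \;>\; 0,
\]
the positivity of $\alpha(k)$ coming from $\psi_0^F > 0$ (Theorem~\ref{thm_OU}(a)).

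Second, the definition \eqref{def_Llambda} shows $L^\lambda_t \equiv 0$ whenever $X_t \equiv 0$, and Theorem~A(a) then forces $L_t = 0$ on $\{X_t = 0\}$, so $\{L_t([k\sqrt{t},\infty)) > 0\} \subseteq \{X_t > 0\}$. Hence under $\N_0(\,\cdot\,\mid X_t > 0)$, the non-negative random variable $Z := L_t([k\sqrt{t},\infty))$ satisfies
\[
E[Z] \;=\; \tfrac{t}{2}\, C_A\, t^{-\lambda_0}\, \alpha(k), \qquad E[Z^2] \;\leq\; \tfrac{t}{2}\, C_B\, t^{1-2\lambda_0} \;=\; \tfrac{C_B}{2}\, t^{2-2\lambda_0},
\]
where the bound on $E[Z^2]$ is just Theorem~A(d). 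The Paley--Zygmund inequality then yields
\[
\N_0\bigl( Z > 0 \,\big|\, X_t > 0 \bigr) \;\geq\; \frac{(E[Z])^2}{E[Z^2]} \;\geq\; \frac{C_A^2\,\alpha(k)^2}{2\, C_B} \;=:\; c_{\ref{lemma_Ltposhalfline}}(k) \;>\; 0,
\]
which is independent of $t$, as required.

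The argument is essentially routine once Theorem~A is in hand; the only mild subtlety is the observation $\{L_t > 0\} \subseteq \{X_t > 0\}$ used to pass from the $\sigma$-finite $\N_0$ to a probability before invoking Paley--Zygmund, but this is immediate from \eqref{def_Llambda} and Theorem~A(a). I expect no real obstacle in carrying this plan out.
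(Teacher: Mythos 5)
Your proof is correct and takes essentially the same approach as the paper: the bound \eqref{Lsquare} is quoted from Theorem~A(d), the first moment is computed from Theorem~A(b), and \eqref{Lposlb} follows by the second moment method. The paper applies the Cauchy--Schwarz second moment bound directly under the $\sigma$-finite $\N_0$ and then divides by $\N_0(X_t>0)=2/t$, whereas you condition on $\{X_t>0\}$ first and then invoke Paley--Zygmund under the resulting probability measure; these are the same computation in a different order, and both correctly use $L_t=0$ on $\{X_t=0\}$.
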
 
\begin{proof} The first claim is immediate from Theorem A(d). 
 The second claim is an easy application of the second moment method as we now show. By Theorem~A(b) the first moment of $L_t([k \sqrt t,\infty))$  is
\begin{align}
\N_0 \big( L_t([k\sqrt t, \infty)) \big) &= C_A \,t^{-\lambda_0} \int 1(\sqrt t z \geq k \sqrt t) \psi_0(z) \, dm(z) \nonumber 
= C_A \,t^{-\lambda_0} \int_k^\infty \psi_0 \, dm = c(k)\, t^{-\lambda_0},
\end{align}
where $c(k)>0$.
Thus by the second moment method, we have
\[ \N_0 \big( \{L_t(k \sqrt t ,\infty) > 0 \}\big) \geq \frac{\N_0 \big( L_t(k\sqrt t, \infty) \big)^2}{\N_0 ( L_t(k \sqrt t, \infty)^2)} \geq \frac{(c(k) \, t^{-\lambda_0} )^2}{C_{\ref{lemma_Ltposhalfline}} \, t^{1-2\lambda_0}} =: 2c_{\ref{lemma_Ltposhalfline}}(k) \, t^{-1}. \]
Because $L_t = 0$ when $X_t = 0$ and $\N_0( \{X_t > 0\}) = 2/t$, this implies that $\N_0\big( \{L_t(k\sqrt t,\infty) > 0\} \, \big| \, X_t > 0 \big) \geq c_{\ref{lemma_Ltposhalfline}}(k)$.
\end{proof}
We begin the study of $X_t$ near the upper edge of its support.  Recall the notation $\tau^\epsilon(t)$ from \eqref{def_taueps} in the Introduction. 
We first obtain a preliminary upper bound for the mass of $X_t$ near $\tau^\epsilon(t)$.
\begin{lemma} \label{lemma_boundaryGrowthFK}
Let $t,\epsilon>0$ and $u> 0$. Then
\begin{align}
\N_0 \left( \int_{\tau^\epsilon(t) - u}^\infty X_t(x) dx \right) \leq eE_0^B\Bigl(\exp\Bigl\{-\int_0^t v_s^{\eps^{-1}}(B_s+u)ds\Bigr\}\Bigr),
 \nonumber
\end{align}
where $B$ is a standard Brownian motion under $P_0^B$ and $v^{\eps^{-1}}$ is as in \eqref{e_dualPDElambda2}.
If $X_0\in \cM_F(\R)\setminus\{0\}$, then\\
 $E^X_{X_0}\left(\int_{\tau^\epsilon(t)-u}^\infty X_t(x)ds\right)/X_0(1)$ is bounded by the same expression.
\end{lemma}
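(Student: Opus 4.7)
The plan is to exploit the identity $\{y \geq \tau^\eps(t) - u\} = \{X_t([y+u,\infty)) < \eps\}$ together with a Palm measure computation. First, I would rewrite
\[
\int_{\tau^\eps(t) - u}^\infty X_t(dy) = \int 1\bigl(X_t([y+u,\infty)) < \eps\bigr)\, X_t(dy)
\]
and use the elementary pointwise bound $1_{\{z < \eps\}} \leq e\cdot e^{-z/\eps}$ (valid for $z \geq 0$) to conclude
\[
\int_{\tau^\eps(t) - u}^\infty X_t(dy) \leq e \int e^{-\eps^{-1}X_t([y+u,\infty))}\, X_t(dy).
\]

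The main tool is the Palm measure formula for canonical super-Brownian motion (Theorem~4.1.3 of \cite{DP}): for any nonnegative measurable $F$,
\[
\N_0\!\left(\int F(y, X_t)\, X_t(dy)\right) = E_0^B\bigl[\tE\bigl[F(B_t, \tilde X^B_t)\bigr]\bigr],
\]
where $B$ is a standard Brownian motion starting at $0$ under $P_0^B$, and given $B$ the random measure $\tilde X^B_t := \sum_{(s,\nu)\in \Xi} \nu_{t-s}$ is the superposition of Poisson clusters with intensity $ds\,\N_{B_s}(d\nu)$ on $[0,t]\times C([0,\infty),\cM_F(\R))$. Applying this with $F(y,\mu) = \exp(-\eps^{-1}\mu([y+u,\infty)))$ and computing the conditional Laplace functional of $\Xi$ using \eqref{e_LapFunCanon} together with the translation identity $V^{\eps^{-1}1_{[a,\infty)}}_r(x) = v^{\eps^{-1}}_r(a-x)$ yields
\[
\tE\!\left[\exp\!\bigl(-\eps^{-1}\tilde X^B_t([B_t+u,\infty))\bigr) \,\Big|\, B\right] = \exp\!\left(-\int_0^t v^{\eps^{-1}}_{t-s}(B_t + u - B_s)\,ds\right).
\]

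To finish, I would substitute $s' = t - s$ and invoke time reversal: the process $(B_t - B_{t-s'})_{s'\in[0,t]}$ is itself a standard Brownian motion starting at $0$, so the exponent equals $\int_0^t v^{\eps^{-1}}_{s'}(u + \tilde B_{s'})\,ds'$ in distribution, yielding the $\N_0$ bound. For the $P^X_{X_0}$ version, the analogous Palm decomposition writes the left side as $\int X_0(dx)\, E_x^B[\tE[F(B_t, X^{(1)}_t + \tilde X^B_t)]]$, where $X^{(1)}$ is a nonnegative remainder mass (an independent super-Brownian motion starting from $X_0$). Dropping the nonnegative $X^{(1)}_t$ from the exponent, writing $B_s = x + W_s$ so that $B_t - B_s = W_t - W_s$, and repeating the time-reversal computation produces an integrand that is independent of $x$, so integration against $X_0(dx)$ contributes exactly the prefactor $X_0(1)$. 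The principal technical obstacle is pinning down the precise form of the Palm formula (particularly the normalization of the cluster intensity) under the conventions of the paper; once that is in place, the argument reduces to a direct Laplace-transform computation combined with a time-reversal identity.
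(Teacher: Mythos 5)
Your argument matches the paper's proof essentially step for step: you rewrite the region $\{y\ge\tau^\eps(t)-u\}$ as $\{X_t([y+u,\infty))<\eps\}$, use the elementary bound $1_{\{z<\eps\}}\le e\,e^{-z/\eps}$, apply the Palm formula of Theorem~4.1.3 of \cite{DP} (and the $P^X_{X_0}$ analogue, Theorem~4.1.1, discarding the nonnegative independent remainder), and finish with the same time-reversal of the backbone Brownian motion; the translation-invariance observation that lets you pull out $X_0(1)$ is likewise what the paper does. This is a correct proof following the same route as the paper.
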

\begin{proof} As $t$ is fixed we will write $\tau^\epsilon$ for $\tau^\epsilon(t)$.
We begin by examining $\N_0 \left( \int_{\tau^\epsilon - u}^\infty X_t(x) dx \right)$. It is equal to
\begin{align}
\N_0 \left( \int 1(x+u > \tau^\epsilon) X_t(x)\, dx \right)  
&= \N_0 \left( \int 1\left(X_t([x+u,\infty)) < \epsilon \right) X_t(x) \,dx \right) \nonumber
\\ &\leq e \N_0 \left( \int e^{-\epsilon^{-1} X_t([x+u,\infty))} X_t(x)\, dx \right). \nonumber
\end{align}
(We note that the above is true both when $\tau^\epsilon \in \R$ and when $\tau^\epsilon = -\infty$, in which case $X_t(x+ u,\infty) \leq \epsilon$ for all $x \in \R$.) By Theorem 4.1.3 of Dawson and Perkins \cite{DP} and 
translation invariance, the above is equal to
\begin{align} \label{e_endlem1}
e E_0^B \left( \exp \left(-\int_0^t  u_{t-s}^{\epsilon^{-1}}(W_s-W_t-u) \, ds \right)\right),
\end{align}
where $W$ is a standard Brownian motion under $P^B_0$ and $u_s^\lambda(x)$ solves \eqref{e_dualPDElambda2} but with $u_0^\lambda=\lambda 1_{[0,\infty)}$. Clearly $u^\lambda_t(x)=v^\lambda_t(-x)$ ($v^\lambda_t$ as in \eqref{e_dualPDElambda2}).  So if $B_s=-W_{t-s}+W_t$, a new standard Brownian motion under $P_0^B$, the above equals
\begin{equation}\label{vepsbound}
eE_0^B\left(\exp\Bigl\{-\int_0^tu^{\epsilon^{-1}}_{t-s}(-B_{t-s}-u)ds\Bigr\}\right)=eE_0^B\left(\exp\Bigl\{-\int_0^tv_s^{\epsilon^{-1}}(B_s+u)ds\Bigr\}\right).
\end{equation}

Consider next $P^X_{X_0}$, for a non-zero initial condition $X_0$.  Then, as above, $E^X_{X_0}\left(\int_{\tau^\eps+u}^\infty X_t(x)dx\right)$ equals
\[E_{X_0}^X\left(\int 1(X_t([x+u,\infty))<\eps)X(t,x)dx\right),\]
which by Theorem 4.1.1 of \cite{DP} is bounded by 
\[\iint 1(X_t([x+u,\infty))<\eps)d\N_{x_0}dX_0(x_0)\le X_0(1)eE_0^B\Bigl(\exp\Bigl\{-\int_0^t v_s^{\eps^{-1}}(B_s+u)ds\Bigr\}\Bigr).\]
To obtain the left-hand side of the above, we have ignored the contribution to $X_t$ from particles unrelated to the individual 
selected at $x$ by $X_t$ (the quoted theorem in \cite{DP} giving the rigorous justification), and the inequality follows from the bound \eqref{vepsbound} and the fact that the above calculation applies, where now $W_0=x_0$, because $B$ remains a Brownian motion starting at $0$. 
\end{proof}

We can now give the proof of Proposition~\ref{prop_boundaryGrowthBd} (restated below for convenience).  The quantity of interest is bounded in terms of the survival probability of an Ornstein-Uhlenbeck process $Y$ killed at rate $G(Y_s)$, for which we know the lead eigenvalue is $-1$ by Proposition \ref{prop_leadeig}. This leads to the $u^2$ term in upper bound. Proposition \ref{prop_vlambdaROC} allows us to make the approximations which lead to the eigenvalue problem.\\

{\bf Proposition~\ref{prop_boundaryGrowthBd}. 
\it There is a non-increasing function, $c_{\ref{prop_boundaryGrowthBd}}(t)$, such that for all $t,\epsilon>0$ and $u> 0$:}\hfil\break
(a) For any $X_0\in \cM_F(\R)$, $E^X_{X_0}\left(\int_{\tau^\eps(t)-u}^\infty X_t(x)dx\right)\le c_{\ref{prop_boundaryGrowthBd}}(t)X_0(1)(u^2\vee\eps)$.

(b) 
$\N_0\left( \int_{\tau^\epsilon(t) - u}^\infty X_t(dx) \right) 
\leq c_{\ref{prop_boundaryGrowthBd}}(t)(u^2 \vee \epsilon)$.\\

\noindent{\it Proof.}
The results are trivial if $u>1$ so we may assume $u\le 1$. Suppose first that $1\ge u^2 \geq \epsilon$.  By Lemma~\ref{lemma_boundaryGrowthFK} it suffices to show
\begin{equation}\label{expbound}
E_0^B\Bigl(\exp\Bigl\{-\int_0^t v_s^{\eps^{-1}}(B_s+u)ds\Bigr\}\Bigr)
\le c_{\ref{prop_boundaryGrowthBd}}(t)(u^2 \vee \epsilon).
\end{equation}

By the scaling relation \eqref{scale3} the left-hand side of the above
equals
\begin{equation}\label{expbound2} E_0^{{B}} \left( \exp \left(-\int_0^t \frac 1 s  v_{1}^{\epsilon^{-1}s}\left(\frac{{B}_{s}}{\sqrt s} + \frac{u}{\sqrt{s}} \right) \, ds \right)\right).
\end{equation}
 We define $\hat{Y}_s = e^{-s/2}{B}_{e^s}$, which defines a stationary Ornstein-Uhlenbeck process on $\R$. As this process is reversible with respect to its stationary measure $m$, $Y_s = \hat{Y}_{-s} = e^{s/2} {B}_{e^{-s}}$ is also a stationary Ornstein-Uhlenbeck process. We denote its expectation by $E^Y$. An exponential time change ($s=e^{-\hat s}$)  shows that \eqref{expbound2} is equal to
\begin{align}
 E^Y \left( \exp \left(- \int_{-\log t}^{\infty} v_{1}^{\epsilon^{-1}e^{-\hat s} } \left( Y_{\hat s}+ u e^{\hat s/2} \right) \right) \, d\hat s \right) &=E^Y_m \left( \exp \left(- \int_{0}^\infty v_{1}^{\epsilon^{-1}te^{-s'} } \left( Y_{s'} + u t^{-1/2}e^{s'/2} \right) ds' \right)   \right). \nonumber
\end{align}
The equality follows from changing variables to $s' = \hat s + \log t$ and the stationarity of $Y$.
We next truncate the integral and then add and subtract a $v_1^\infty$ term. This shows that if $p\in(0,1/6)$, then \eqref{expbound2} is at most
\begin{align}
E^Y_m \Bigg( & \exp \Bigg( -\int_{0}^{\log{(1/u^2)}} v_{1}^{\epsilon^{-1}te^{-s} } \Bigg( Y_s + u t^{-1/2} e^{s/2} \Bigg) \Bigg)  ds \Bigg) \nonumber
\\ &= E^Y_m \Bigg( \exp \left(\int_{0}^{\log(1/u^2)} \left(v_{1}^{\infty} - v_{1}^{\epsilon^{-1}te^{-s} }\right) \left( Y_s + u t^{-1/2} e^{s/2} \right) \, ds\right)  \exp \left(-\int_{0}^{\log(1/u^2)} v_{1}^{\infty} \left( Y_s + u t^{-1/2} e^{s/2} \right) ds\right)  \Bigg) \nonumber
\\ &\leq  E^Y_m \Bigg(  \exp \left(C_{\ref{prop_vlambdaROC}}(p) \int_{0}^{\log(1/u^2)} \left(\epsilon^{-1} t e^{-s} \right)^{-p} ds\right)  \exp \left(-\int_{0}^{\log(1/u^2)} v_{1}^{\infty} \left( Y_s + u t^{-1/2} e^{s/2} \right) ds\right)  \Bigg). \nonumber
\end{align}
The inequality follows by Proposition \ref{prop_vlambdaROC}. Moreover, since $u^2 \geq \epsilon$,
\begin{align}
\int_{0}^{\log(1/u^2)} \left(\epsilon^{-1} t e^{-s} \right)^{-p} ds = t^{-p} \, \epsilon^p \big[ e^{ps}/p \big]_0^{\log(1/u^2)} \leq\frac{ t^{-p}}{p}\, \left(\frac{\epsilon}{u^2}\right)^p \leq \frac{t^{-p}}{p}.\nonumber
\end{align}
This bounds \eqref{expbound2} above by
\begin{align} \label{e_endmassprop1}
e^{C_{\ref{prop_vlambdaROC}}t^{-p}/p} E^Y_m \Bigg( \exp \left(-\int_{0}^{\log(1/u^2)} G \left( Y_s + u t^{-1/2} e^{s/2} \right) ds\right)  \Bigg),
\end{align}
where $C_{\ref{prop_vlambdaROC}}=C_{\ref{prop_vlambdaROC}}(p)$ and we recall $G = v^\infty_1$. 
Define $\Delta(s)$ by
\begin{equation}
\Delta(s) = \big| G(Y_s) - G \left( Y_s + u t^{-1/2} e^{s/2} \right)\big|. \nonumber
\end{equation}
$G'$ is continuous and has limit $0$ at $\pm \infty$, thus $\|G'\|_\infty < \infty$. By the Mean Value Theorem,
\[ \Delta(s) \leq \|G'\|_\infty \, t^{-1/2}\, ue^{s/2} .\]
Thus (\ref{e_endmassprop1}) is bounded above by
\begin{align} \label{e_endmassprop2}
& e^{ C_{\ref{prop_vlambdaROC}}t^{-p}/p} E^Y_m \Bigg( \exp \left(\int_0^{\log(1/u^2)} \Delta(s) \, ds \right) \exp \left(-\int_{0}^{\log(1/u^2)} G (Y_s)\, ds\right)  \Bigg) \nonumber
 \\ &\leq e^{C_{\ref{prop_vlambdaROC}}t^{-p}/p}E^Y_m \Bigg( \exp \left(\|G'\|_\infty t^{-1/2}
 \, u\int_0^{\log(1/u^2)}  e^{s/2} \, ds \right) \exp \left(-\int_{0}^{\log(1/u^2)} G (Y_s)\, ds\right)  \Bigg) \nonumber
 \\&\leq e^{ C_{\ref{prop_vlambdaROC}}t^{-p}/p+2\|G'\|_\infty t^{-1/2}}\,E^Y_m \Bigg( \exp \left(-\int_{0}^{\log(1/u^2)} G (Y_s)\, ds\right)  \Bigg). 
\end{align}
Let $c(t) = e^{C_{\ref{prop_vlambdaROC}}t^{-p}/p+2\|G'\|_\infty t^{-1/2}}$. The remaining term is the probability that an Ornstein-Uhlenbeck process killed at rate $G(Y_s)$ survives until time $\log(1/u^2)$. If $\rho^G$ is the lifetime of this process, we have bounded \eqref{expbound2} by
\begin{align}
c(t)E^Y_m \left( \exp \left(-\int_{0}^{\log(1/u^2)} G (Y_s)\, ds\right)  \right) &= c(t)P^Y_m \left( \rho^G > \log(1/u^2) \right) \,\nonumber
\\ &\leq c(t)\,Ce^{-\lambda_0^G (\log(1/u^2))} = c_{\ref{prop_boundaryGrowthBd}}(t)u^2. \nonumber
\end{align}
The  inequality follows from \eqref{OUSbound} in Theorem \ref{thm_OU}(b) and the final equality is by Proposition \ref{prop_leadeig}
and setting $c_{\ref{prop_boundaryGrowthBd}}(t) = Cc(t)$. This completes the proof when $u^2 \geq \epsilon$. If $u^2 <\epsilon$, we have for (b), say,
\begin{align} \nonumber
\N_0\left( \int_{\tau^\epsilon - u}^\infty X_t(dx) \right) &\leq \N_0\left( \int_{\tau^\epsilon - \sqrt \epsilon}^\infty X_t(dx) \right) \leq c_{\ref{prop_boundaryGrowthBd}}(t) \epsilon,
\end{align}
where the final inequality follows by applying the $(u')^2 \geq \epsilon$ case with $u' = \sqrt \epsilon$.
The argument for (a) is the same.
\qed\\

We are now ready to give the proof of Theorem~\ref{thm_01-law}.  As suggested in the Introduction, the method of proof is to decompose the measure $X_{t-\delta}$ into two measures, to the right and left of $\tau^\delta(t-\delta)$. We then show that there is a uniformly positive probability that, the measure to the right of $\tau^\delta(t-\delta)$ produces positive mass (at time $t$)  in $L_t$ on a set far enough to the right that the mass from the measure to the left of $\tau^\delta(t-\delta)$ does not interfere with it.\\

\noindent \emph{Proof of Theorem \ref{thm_01-law}.} First, consider $P^X_{X_0}$. Let $(\cF_t)$ denote the usual completed right-continuous filtration generated by the associated historical process, $H$.
Fix $t>0$. Let $\delta_n = 2^{-n}$ and only consider $n$ so that $\delta_n<t/2$. We will show that the martingale $P^X_{X_0} \big(L_t>0\, \big|\, \cF_{t-\delta_n} \big)$ is bounded below by a positive number a.s. on $\{X_t>0\}$, and so, as it converges to $1_{\{L_t > 0\}}$ a.s., the latter must be  $1$ a.s. on $\{X_t>0\}$.\\

Set $\tau_n = \tau^{\delta_n}(t-\delta_n)$, that is,
\[ \tau_n = \inf \{ x \in \R : X_{t-\delta_n}([x,\infty)) < \delta_n \}\ge -\infty.\]
Now invoke the decomposition in \eqref{histdecomp} and \eqref{histcondlaw} with $\tau=\tau_n$ and $\delta=\delta_n$.  That is, we define  random measures by 
\begin{equation} \label{e_measuredecomp}
X_{t-\delta_n}^R(dx) = 1_{\{x\ge \tau_n\}}\, X_{t-\delta_n}(dx), \hspace{ 8 mm} X_{t-\delta_n}^L(dx) = 1_{\{x<\tau_n\}}\, X_{t-\delta_n}(dx),
\end{equation}
and define measure-valued processes by 
\begin{align*}
&\hat X^R_s(\phi)=\hat X^{R,\tau_n,\delta_n}_s(\phi)=\int\phi(y_{t-\delta_n+s})1(y_{t-\delta_n}\ge\tau_n)H_{t-\delta_n+s}(dy),\\
&\hat X^L_s(\phi)=\hat X^{L,\tau_n,\delta_n}_s(\phi)=\int\phi(y_{t-\delta_n+s})1(y_{t-\delta_n}<\tau_n)H_{t-\delta_n+s}(dy)\\
&\hat X_s(\phi)=\hat X^{R}_s(\phi)+\hat X^L_s(\phi)\ (=X_{t-\delta_n+s}(\phi)).
\end{align*}
Therefore by \eqref{histdecomp} and \eqref{histcondlaw}, 
 \begin{align}\label{Xtdecomp}
&X_t=\hat X_{\delta_n}=\hat{X}_{\delta_n}^R+\hat{X}_{\delta_n}^L,\text{ where conditional on $\cF_{t-\delta_n}$, $\hat{X}^R$ and $\hat{X}^L$ are independent super-Brownian motions}\\
\nonumber&\text{ with initial states $X^R_{t-\delta_n}$ and $X^L_{t-\delta_n}$, respectively. }
\end{align}
Below we will argue conditionally on $\cF_{t-\delta_n}$ and hence work with this pair of independent super-Brownian motions, $\hat X^R$ and $\hat X^L$, with initial laws $X^R_{t-\delta_n}$ and $X^L_{t-\delta_n}$.  
We apply the cluster decomposition (\ref{e_clusterdecomp1})  to each of these super-Brownian motions
to conclude
\begin{equation} \label{e_clusterdecomp3}
\hat{X}^R_{\delta_n} \stackrel{\cD}{=} \sum_{i =1}^{N_R} \hat{X}^{R,i}_{\delta_n} ,\quad \hat{X}^L_{\delta_n} \stackrel{\cD}{=} \sum_{i =1}^{N_L} \hat{X}^{L,i}_{\delta_n},
\end{equation}
where $N_R$ is Poisson with rate $2X^R_{t-\delta_n}(1)/\delta_n$, $N_L$ is an independent Poisson r.v. with rate $2X^L_{t-\delta_n}(1)/\delta_n$, and, conditional on $(N_R,N_L)$, $\{\hat{X}^{R,i}_{\delta_n}:i\le N_R\}$ are iid with law $\int\N_x(X_{\delta_n}\in\cdot|X_{\delta_n}>0)X^R_{t-\delta_n}(dx)/X^R_{t-\delta_n}(1)$ and  
$\{\hat{X}^{L,i}_{\delta_n}:i\le N_L\}$ are iid with law $\int\N_x(X_{\delta_n}\in\cdot|X_{\delta_n}>0)X^L_{t-\delta_n}(dx)/X^L_{t-\delta_n}(1)$. These last two collections are also conditionally independent. (Note also that if $X^L_{t-\delta_n}=0$, say, then $N_L=0$ and so there are no clusters to describe.) 
Let $\hat{L}^R_{\delta_n}$, $\hat{L}^L_{\delta_n}$ and $\hat{L}_{\delta_n}$denote the boundary local times of 
$\hat{X}^R_{\delta_n}$, $\hat{X}^L_{\delta_n}$ and $\hat{X}_{\delta_n}$, respectively. By \eqref{Xtdecomp} and applying Theorem~\ref{THMW} conditionally on $\cF_{t-\delta_n}$ we have
\begin{align} \label{e_localtimesplit}
\hat{L}_{\delta_n}(dx) &= 1_{\{\hat{X}^L_{\delta_n}(x) = 0\}}\hat{L}^R_{\delta_n}(dx) +1_{\{\hat{X}^R_{\delta_n}(x) = 0\}}\hat{L}^L_{\delta_n}(dx).
\end{align}
We also let $\hat{L}^{R,i}_{\delta_n}$ denote the boundary local time of $\hat{X}^{R,i}_{\delta_n}$. 
If  $\mu \otimes \nu$ denotes product measure, it follows that
\begin{align}
\nonumber P^X_{X_0}&(L_t([\tau_n+3\sqrt{\delta_n},\infty))>0|\cF_{t-\delta_n})\\
\nonumber&= P^X_{X_0}(\hat{L}_{\delta_n}([\tau_n+3\sqrt{\delta_n},\infty))>0|\cF_{t-\delta_n})
\\
\nonumber&\ge P^X_{X^R_{t-\delta_n}}\otimes P^X_{X^L_{t-\delta_n}}\Bigl(\int 1(x\ge \tau_n+3\sqrt{\delta_n})
1(\hat{X}^L_{\delta_n}(x)=0)\hat{L}^R_{\delta_n}(dx)>0\Bigr)\quad\text{(by \eqref{Xtdecomp} and \eqref{e_localtimesplit})}\\
\label{Lposbnd}&\ge P^X_{X^R_{t-\delta_n}}(\hat{L}^R_{\delta_n}([\tau_n+3\sqrt{\delta_n},\infty))>0)P^X_{X^L_{t-\delta_n}}(\hat{X}^L_{\delta_n}([\tau_n+3\sqrt{\delta_n},\infty))=0).
\end{align}
Now work on $\{\tau_n>-\infty\}\in\cF_{t-\delta_n}$ and consider the first term in \eqref{Lposbnd}. 
In this case
$X^R_{t-\delta_n}(1)=\delta_n$, and so $N_R$ is Poisson with mean $2$.  Therefore by restricting
to $\{N_R=1\}$ and noting that in this case $\hat L^R_{\delta_n}=\hat{L}^{R,1}_{\delta_n}$, we have 
\begin{align*}
P^X_{X^R_{t-\delta_n}}(\hat{L}^R_{\delta_n}([\tau_n+3\sqrt{\delta_n},\infty))>0)
&\ge 2e^{-2}P^X_{X^R_{t-\delta_n}}(\hat{L}^{R}_{\delta_n}([\tau_n+3\sqrt{\delta_n},\infty))>0|N_R=1)\\
&=2e^{-2}\int_{\tau_n}^\infty \N_{x}(L_{\delta_n}([\tau_n+3\sqrt{\delta_n},\infty))>0|X_{\delta_n}>0)X^R_{t-\delta_n}(dx)/\delta_n\\
&=2e^{-2}\int_{\tau_n}^\infty \N_0(L_{\delta_n}([\tau_n-x+3\sqrt{\delta_n},\infty))>0|X_{\delta_n}>0)X^R_{t-\delta_n}(dx)/\delta_n\\
&\ge2e^{-2}\N_0(L_{\delta_n}([3\sqrt{\delta_n},\infty))>0|X_{\delta_n}>0),
\end{align*}
where the last line again uses $X^R_{t-\delta_n}(1)=\delta_n$ on $\{\tau_n>-\infty\}$.  Therefore Lemma~\ref{lemma_Ltposhalfline} and \eqref{Lposbnd} now imply that on $\{\tau_n>-\infty\}$,
\begin{equation} \label{e_condexpLt4}
P^X_{X_0} \big(1_{\{ L_t([\tau_n + 3\sqrt{\delta_n},\infty)) > 0\}} \big| \, \cF_{t-\delta_n} \big) \geq 2e^{-2}c_{\ref{lemma_Ltposhalfline}}(3)\,\times P^X_{X^L_{t-\delta_n}} \big( \hat{X}^L_{\delta_n}([ \tau_n + 3 \sqrt{\delta_n}, \infty)) = 0\big).
\end{equation}

It remains to handle the final probability. 
We will consider events on which it has a uniform lower bound and which will occur infinitely often in $n$. For $K \in \N$, define an event $A_{K,n} \in \cF_{t-\delta_n}$ by
\begin{equation} \label{e_AKn_def}
A_{K,n} = \left\{\int_3^\infty w e^{-w^2/2}\,X_{t-\delta_n}( \tau_n -(w-3) \sqrt{\delta_n}) \, dw \leq K \sqrt{\delta_n} \right\}.
\end{equation}
(Note that the $A_{K,n}$ depends only on mass to the left of $\tau_n$, and so the measure in the integral is equal to $X^L_{t-\delta_n}$.) Noting that $X_{t-\delta_n}(-\infty) = 0$, we see that $\{\tau_n=-\infty\}\subset A_{K,n}$. On $A_{K,n}$, we have the following lower bound on the probability on the right-hand side of  (\ref{e_condexpLt4}):
\begin{equation} \label{e_X2qK}
P^X_{X_{t-\delta_n}^L} \big( \hat{X}_{\delta_n}([\tau_n + 3\sqrt{\delta_n},\infty)) = 0\,\big) \geq e^{-c_{\ref{th_DIP33}} \, K} =: q_K.
\end{equation}
To prove \eqref{e_X2qK}, first note it is trivial when $\tau_n = -\infty$, because in this case $X^L_{t-\delta_n} = 0$. To see it when $\tau_n > -\infty$ we apply Theorem \ref{th_DIP33}(ii) with $R = 3\sqrt{\delta_n}$ and initial state $X_{t-\delta_n}^L$, along with translation invariance and the change of variables $w=(\tau_n+3\sqrt{\delta_n}-x)/\sqrt{\delta_n}$, to obtain (on $A_{K,n}$)
\begin{align}
&P^X_{X_{t-\delta_n}^L} \big( \hat{X}_{\delta_n}([\tau_n + 3\sqrt{\delta_n},\infty)) = 0\,\big)  \nonumber
\\ &\geq \exp \left(-c_{\ref{th_DIP33}}\, \int_{-\infty}^{\tau_n} (\tau_n + 3 \sqrt{\delta_n} - x)^{-2} \left( \frac{\tau_n + 3 \sqrt{\delta_n} - x}{\sqrt{\delta_n}} \right)^3 \exp \left( -(\tau_n + 3 \sqrt{\delta_n} - x)^2 / 2\delta_n \right) \,X_{t-\delta_n}(x) \,dx  \right) \nonumber
\\ &= \exp \left(-c_{\ref{th_DIP33}}\,\frac{1}{\sqrt{\delta_n}} \int_{3}^{\infty}w\,e^{ -w^2 / 2}\, X_{t-\delta_n}(\tau_n - (w- 3)\sqrt{\delta_n}) \,dw \right)  \nonumber
\\ &\geq e^{-c_{\ref{th_DIP33}}K},\nonumber
\end{align}
which proves (\ref{e_X2qK}), with the final inequality using the fact that $\omega \in A_{K,n}$.  Let $\Lambda_K = \{ A_{K,n} \cap \{\tau_n > -\infty\} \, \text{ infinitely often in } n\}$. That is,
\begin{equation} \label{def_LambdaK}
\Lambda_K = \bigcap_{M=1}^\infty \ \bigcup_{n \geq M} \left( A_{K,n} \cap \{\tau_n > -\infty\}\right).
\end{equation} 
By \eqref{e_condexpLt4} and (\ref{e_X2qK}), for all $\omega \in \Lambda_K$, we have
\[ P^X_{X_0} \big( L_t ([\tau_n+3\sqrt{\delta_n},\infty))> 0 \, \big| \, \cF_{t-\delta_n} \big) \geq 2e^{-2}c_{\ref{lemma_Ltposhalfline}}(3) \, q_K =: p_K\text{ for infinitely many $n$.} \]
It follows that
\begin{equation} \label{e_mtglimsup}
\limsup_{n \to \infty} \, P^X_{X_0} \big( L_t > 0 \, \big| \, \cF_{t-\delta_n} \big)\ge \limsup_{n\to\infty} P^X_{X_0}(L_t([\tau_n+3\sqrt{\delta_n},\infty))>0\big|\cF_{t-\delta_n})\geq p_K\text{ a.s. on }\Lambda_K.
\end{equation}
Moreover, $P^X_{X_0} \big( L_t > 0 \, \big| \, \cF_{t-\delta_n} \big)$ is a bounded martingale and converges almost surely to $P^X_{X_0}\big(L_t>0\, | \, \cF_{t^-} \big)$. Because $s \to X_s$ is a continuous map, we have $X_t = X_{t^-}$ and so $X_t$ is measurable with respect to $\cF_{t^-}$. Moreover, $L_t$ is defined as a  measurable functional of $X_t$ (recall Theorem~A(a)). Thus we have 
\begin{equation}
P^X_{X_0} \big( L_t > 0 \, \big| \, \cF_{t-\delta_n} \big) \to P^X_{X_0}\big(L_t>0\, | \, \cF_{t^-} \big) = 1_{\{L_t>0\}}\,\,\,\, \text{ a.s.}
\end{equation} 
By (\ref{e_mtglimsup}), this implies that $1_{\{L_t > 0\} }(\omega) \geq p_K>0$ a.s. on $\Lambda_K$, and hence
\begin{equation}\label{Lposlam}
L_t>0 \text{ almost surely on $\Lambda_K$.}\\
\end{equation}
The final ingredient of the proof is to show that $\Lambda_K \uparrow \{X_t > 0\}$ as $K \to \infty$ a.s. We proceed by bounding the probability of $A_{K,n}^c$. Using (\ref{e_AKn_def}) and Markov's inequality, we have
\begin{align} \label{e_AKnc_bd1}
P^X_{X_0}(A_{K,n}^c) = P^X_{X_0}(A_{K,n}^c \cap \{\tau_n > -\infty\}) &\leq \frac{1}{K\sqrt{\delta_n}}\,E^X_{X_0} \left(1_{\{\tau_n> -\infty\}}\,\int_3^\infty w e^{-w^2/2}\, X_{t-\delta_n}(\tau_n -(w-3) \sqrt{\delta_n}) \,dw\right).
\end{align}
The first equality follows because $A_{K,n}^c \subseteq \{\tau_n > - \infty\}$. We proceed by integration by parts. For $w > 3$, define $g(w)$ by
\[g(w) = \frac{1}{\sqrt{\delta_n}} X_{t-\delta_n}([\tau_n - (w-3)\sqrt{\delta_n},\tau_n]) =\left(\int_3^w X_{t-\delta_n}( \tau_n - (u-3) \sqrt{\delta_n}) \, du \right). \] 
The last expression, which follows from a change of variables, makes it clear that 
\[g'(w) = X_{t-\delta_n}( \tau_n -(w-3) \sqrt{\delta_n}).\]
Clearly $g(3) = 0$. Taking $f(w) = we^{-w^2/2}$ and proceeding by integration by parts, on $\{\tau_n > -\infty\}$ we have
\begin{align}
\int_3^\infty w e^{-w^2/2}\, X_{t-\delta_n}( \tau_n -(w-3) \sqrt{\delta_n}) \,dw  \nonumber
&= \bigg[ f(w)g(w) \bigg]_3^\infty - \int_3^\infty f'(w) g(w) \, dw \nonumber
\\ &= 0 + \frac{1}{\sqrt{\delta_n}}\int_3^\infty (w^2-1)e^{-w^2/2}X_{t-\delta_n}([\tau_n - (w-3)\sqrt{\delta_n},\tau_n])\, dw. \nonumber
\end{align}
We substitute this into (\ref{e_AKnc_bd1}) and exchange the order of integration (the integrand is positive) to obtain
\begin{equation} \label{e_AKnc_bd2}
P^X_{X_0}(A_{K,n}^c) \leq \frac{1}{\delta_n \, K}\int_3^\infty (w^2-1)e^{-w^2/2} \, P^X_{X_0} \big( 1_{\{\tau_n> -\infty\}}\,X_{t-\delta_n}\big([\tau_n - (w-3)\sqrt{\delta_n} ,\tau_n] \big)\big)\, dw. 
\end{equation}
We now note that the mass term appearing in the integral can be controlled by Proposition \ref{prop_boundaryGrowthBd}. We have for $w\ge 3$
\begin{align}  \label{e_AKnc_bd3}
P^X_{X_0} \big(1_{\{\tau_n> -\infty\}}\, X_{t-\delta_n}(\tau_n - (w-3)\sqrt{\delta_n} ,\tau_n)\big) &\leq P^X_{X_0} \left(\int_{\tau_n - \sqrt{\delta_n}(w-3)}^\infty X_{t-\delta_n}(x)\, dx \right) \nonumber
\\ &\leq c_{\ref{prop_boundaryGrowthBd}}(t/2)X_0(1) \big( \delta_n + ((w-3)\sqrt{\delta_n})^2 \big) \nonumber
\\ &=  c_{\ref{prop_boundaryGrowthBd}}(t/2)X_0(1)\,\delta_n \, (1+(w-3)^2). 
\end{align}
The second inequality is by Proposition \ref{prop_boundaryGrowthBd} and our initial assumption that $\delta_n<t/2$.  Using (\ref{e_AKnc_bd3}) in (\ref{e_AKnc_bd2}), we obtain for $n\ge n_0$,
\begin{align} \label{e_AKnc_bd4}
P^X_{X_0}(A_{K,n}^c) &\leq \frac{c_{\ref{prop_boundaryGrowthBd}}(t/2)X_0(1)}{K} \int_3^\infty (w^2-1) (1+(w-3)^2)e^{-w^2/2} \, dw = \frac{c_0(t,X_0(1))}{K}=:\frac{c_0}{K}.
\end{align}
This allows us to bound $P^X_{X_0}(\{X_t > 0 \} \cap\Lambda_K^c)$ as follows:
\begin{align}
P^X_{X_0}(\{X_t > 0 \} \cap \Lambda_K^c) &= \lim_{M \to \infty} P^X_{X_0} \Bigg( \{X_t>0\} \cap \Bigg[\bigcap_{n=M}^\infty  A_{K,n}^c \cup \{\tau_n = -\infty\}\Bigg] \Bigg) \nonumber
\\ &\leq \lim_{M \to \infty} P^X_{X_0} \Bigg(\{X_t>0\} \cap \Bigg[ \Bigg(  \bigcap_{n \geq M} A_{K,n}^c \Bigg) \bigcup \Bigg( \bigcup_{n=M}^\infty \{\tau_n = - \infty\} \Bigg) \Bigg] \Bigg)  \nonumber
\\ &\leq \lim_{M \to \infty} P^X_{X_0} \Bigg(\Bigg(  \bigcap_{n \geq M} A_{K,n}^c \Bigg) \bigcup \Bigg( \bigcup_{n=M}^\infty \{\tau_n = - \infty\} \cap \{X_t>0\} \Bigg) \Bigg)  \nonumber
\\ &\leq  \Bigl[\lim_{M\to\infty}P^X_{X_0} \big( A_{K,M}^c \big)\Bigr] + P^X_{X_0} \big( \{X_t(1)>0\} \cap \{X_{t-\delta_n}(1) \le \delta_n \,\text{ i.o.} \} \big) \nonumber
\\ &\leq \frac{c_0}{K}.
\end{align}
The second term vanishes because $s \to X_s(1)$ is continuous almost surely, and the bound on the first is by (\ref{e_AKnc_bd4}). We therefore have that
\begin{equation}\label{lamkbig} P^X_{X_0 }\big( \{X_t > 0 \} \backslash \Lambda_K \big) \leq \frac{c_0}{K},
\end{equation}
and hence for $P^X_{X_0}$-almost all $\omega \in \{X_t > 0 \}$, $\omega \in \Lambda_K$ for $K$ sufficiently large. Here we also use the fact that $\Lambda_K$ is increasing in $K$. This and \eqref{Lposlam} completes the proof that $L_t>0$ a.s. on $\{X_t>0\}$.\\

The claim that $L_t((U_t - \delta,U_t)) > 0$ almost surely on $\{X_t > 0 \}$ now follows from two elementary lemmas, the second of which is left as a standard exercise (a variant is known as Hunt's Lemma).
\begin{lemma} \label{lemma_limsupbd}
For all $\delta > 0$, almost surely we have
\begin{equation}
\limsup_{n\to \infty} 1_{\{L_t([\tau_n + 3\sqrt{\delta_n}, \infty)) > 0\}} \leq 1_{\{L_t((U_t - \delta, U_t))>0\}}.
\end{equation}
\end{lemma}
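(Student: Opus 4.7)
The plan is to reduce the claim to the single assertion that $\liminf_{n\to\infty}\tau_n\ge U_t$ almost surely on $\{X_t>0\}$, and then derive this from continuity of $s\mapsto X_s$ near $s=t$ together with the fact that $X_t$ has positive mass arbitrarily close to $U_t$.

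Assuming $\liminf_n\tau_n\ge U_t$ a.s.\ on $\{X_t>0\}$, fix $\delta>0$. On $\{X_t=0\}$ we have $BZ_t=\emptyset$, so $L_t$ is the zero measure and both indicators vanish. On $\{X_t>0\}$, work on the event that the $\limsup$ on the left equals $1$, so that $L_t([\tau_n+3\sqrt{\delta_n},\infty))>0$ for infinitely many $n$. Because $\delta_n\downarrow 0$, the bound on $\liminf\tau_n$ yields $\tau_n+3\sqrt{\delta_n}>U_t-\delta$ for all $n$ large. Any $x>U_t$ lies in an open neighborhood where $X(t,\cdot)\equiv 0$ (by definition of $U_t$ and continuity of the density), so $x\notin\partial\{X(t,\cdot)>0\}$, whence $\mathrm{supp}(L_t)\subseteq(-\infty,U_t]$. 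Thus for all sufficiently large such $n$,
\[
0<L_t\big([\tau_n+3\sqrt{\delta_n},\infty)\big)=L_t\big([\tau_n+3\sqrt{\delta_n},U_t]\big)\le L_t\big((U_t-\delta,U_t]\big),
\]
and the right-hand term equals $L_t((U_t-\delta,U_t))$ because $L_t$ is atomless by Theorem A(a). So the right-hand indicator is also $1$.

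It therefore remains to prove $\liminf_n\tau_n\ge U_t$ on $\{X_t>0\}$. Since $X_{t-\delta_n}$ has a continuous density, the map $y\mapsto X_{t-\delta_n}([y,\infty))$ is continuous and non-increasing, so $\tau_n\ge y$ if and only if $X_{t-\delta_n}([y,\infty))\ge\delta_n$. Fix $\eps>0$; the task reduces to showing $X_{t-\delta_n}([U_t-\eps,\infty))\ge\delta_n$ for all large $n$. On $\{X_t>0\}$ we have $U_t\in S(X_t)$ and $X(t,\cdot)\equiv 0$ on $(U_t,\infty)$, hence $X_t([U_t-\eps,\infty))=X_t([U_t-\eps,U_t])>0$. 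The a.s.\ continuity of $s\mapsto X_s$ in $\cM_F(\R)$, combined with the atomlessness of $X_t$, makes $y\mapsto X_s([y,\infty))$ converge to $y\mapsto X_t([y,\infty))$ uniformly on compacts (via a P\'olya-type upgrade of pointwise convergence of monotone continuous functions to a continuous limit). Evaluating at the random point $y=U_t-\eps$ yields $X_{t-\delta_n}([U_t-\eps,\infty))\to X_t([U_t-\eps,\infty))>0$ a.s., and since $\delta_n\downarrow 0$ the desired inequality holds for all $n$ large. Running $\eps$ through a countable sequence tending to $0$ synchronizes the exceptional sets and gives $\liminf_n\tau_n\ge U_t$ a.s.

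The only mildly technical point is the passage to the limit with the random endpoint $U_t-\eps$, which is handled by the uniform-on-compacts convergence above; everything else is immediate from the structural properties of $L_t$, $\tau_n$, and $U_t$ already recorded in the paper.
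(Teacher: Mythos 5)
Your argument is correct and follows essentially the same route as the paper: both proofs hinge on showing that $\tau_n > U_t - \delta$ eventually (your $\liminf_n\tau_n\ge U_t$ formulation), deduced from the fact that $X_{t-\delta_n}$ puts mass $>\delta_n$ near $U_t$ for large $n$, and then combine this with $\mathrm{supp}(L_t)\subseteq(-\infty,U_t]$ and atomlessness of $L_t$. The only cosmetic difference is that the paper derives the mass lower bound directly via dominated convergence of the densities on $[U_t-\delta,U_t]$, whereas you route it through weak convergence of $X_{t-\delta_n}$ to $X_t$ plus a Pólya-type uniformity argument — a bit more machinery than needed (pointwise convergence via Portmanteau and atomlessness already suffices at the fixed random point $U_t-\eps$), but sound.
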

\begin{lemma} \label{lemma_huntlimsup}
Let $(\cF_n)_{n \in \N}$ be an arbitrary filtration, $\cF_\infty$ the minimal $\sigma$-algebra containing $\cF_n$ for all $n$, and let $\{Y_n\}_{n \in \N}$ be a sequence of random variables such that $|Y_n| \leq W$ for all $n\in \N$ for some integrable $W$. Then
\[\limsup_{n} E(Y_n \,| \, \cF_n) \leq E(\limsup_{n} Y_n \, | \, \cF_\infty). \]
\end{lemma}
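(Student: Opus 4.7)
The plan is to sandwich $E(Y_n\mid\cF_n)$ from above by conditional expectations of a decreasing dominating sequence and then apply L\'evy's upward martingale convergence theorem, followed by conditional dominated convergence. For each $m\in\N$, set
\[ Z_m \;=\; \sup_{k\ge m} Y_k, \]
which is well defined as an extended real random variable satisfying $|Z_m|\le W$, and $Z_m\downarrow \limsup_n Y_n$ as $m\to\infty$.

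First I would observe that, for any $n\ge m$, $Y_n\le Z_m$, so by monotonicity of conditional expectation
\[ E(Y_n\mid \cF_n) \;\le\; E(Z_m\mid \cF_n) \quad\text{a.s.} \]
Next, for each fixed $m$, the process $\bigl(E(Z_m\mid \cF_n)\bigr)_{n\ge m}$ is a uniformly integrable martingale with respect to the increasing filtration $(\cF_n)$ (uniform integrability follows from $|Z_m|\le W$ and integrability of $W$). By L\'evy's upward theorem it converges almost surely and in $L^1$ to $E(Z_m\mid \cF_\infty)$. Consequently, taking $\limsup_{n\to\infty}$ on both sides of the previous display,
\[ \limsup_{n\to\infty} E(Y_n\mid \cF_n) \;\le\; \lim_{n\to\infty} E(Z_m\mid \cF_n) \;=\; E(Z_m\mid \cF_\infty) \quad\text{a.s.} \]
Only countably many null sets arise as $m$ varies, so this bound holds simultaneously for all $m$ off a single null set.

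Finally I would let $m\to\infty$. Since $Z_m\downarrow \limsup_n Y_n$ and $|Z_m|\le W$ with $W$ integrable, the conditional dominated convergence theorem (or conditional monotone convergence applied to $W-Z_m$) gives
\[ E(Z_m\mid \cF_\infty) \;\longrightarrow\; E\bigl(\limsup_{n} Y_n\,\big|\,\cF_\infty\bigr) \quad\text{a.s. as } m\to\infty, \]
which yields the stated inequality. There is no substantive obstacle here; the only point requiring a little care is organising the countable family of null sets (one from each martingale convergence statement indexed by $m$, plus one from the final conditional dominated convergence) into a single exceptional set, which is routine.
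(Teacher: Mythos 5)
Your proof is correct, and it takes a genuinely different route from the paper's. The paper applies Hunt's Lemma directly to the coupled sequence $L_n=\sup_{r\ge n}Y_r$: it observes that $L_n\to\limsup_m Y_m$ a.s.\ and in $\cL^1$ with domination by $W$, and then invokes Hunt's Lemma to conclude $E(L_n\mid\cF_n)\to E(\limsup_m Y_m\mid\cF_\infty)$, with the conditioning $\sigma$-field and the random variable both varying in $n$. You instead decouple the two indices: fixing $m$, you apply L\'evy's upward martingale convergence theorem to the single integrable random variable $Z_m=\sup_{k\ge m}Y_k$ to get $\limsup_n E(Y_n\mid\cF_n)\le E(Z_m\mid\cF_\infty)$, and only then let $m\to\infty$ via conditional dominated (or monotone) convergence. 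What this buys you is a more elementary and self-contained argument: Hunt's Lemma, which is less universally taught, is replaced by L\'evy's theorem plus a standard limiting step, and in effect you have re-derived the one-sided content of Hunt's Lemma from first principles. The paper's proof is shorter at the cost of importing the stronger black box; yours is slightly longer but uses only the standard martingale toolkit. Both correctly handle the countable collection of null sets.
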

Lemma~\ref{lemma_limsupbd} is proved at the end of the section. First we see how they complete the proof of Theorem~\ref{thm_01-law} for $P^X_{X_0}$. 
Applying \eqref{e_mtglimsup} and the Lemmas, with $Y_n=1_{\{L_t([\tau_n+3\sqrt{\delta_n},\infty))>0\}}$ in Lemma~\ref{lemma_huntlimsup}, we have
\begin{align*}
1_{\{L_t((U_t-\delta,U_t))>0\}}&\ge\limsup_{n\to\infty}E^X_{X_0}(1_{\{L_t([\tau_n+3\sqrt{\delta_n},\infty))>0}\}|\cF_{t-\delta_n})\\
&\ge p_K\quad\text{a.s. on }\Lambda_K.
\end{align*}
So by \eqref{lamkbig}, $L_t((U_t-\delta,U_t))>0$ $P^X_{X_0}$-a.s. on $\{X_t>0\}$.
 \qed \\

\emph{Proof of Lemma \ref{lemma_limsupbd}.} 
We may work on $\{X_t>0\}$ as both sides are zero if $X_t=0$. By Dominated Convergence we have 
\[\lim_{n\to\infty}\int_{U_t-\delta}^{U_t}X_{t-\delta_n}(x)dx=\int_{U_t-\delta}^{U_t}X_t(x)dx>0\quad\text{on }\{X_t>0\}.\]
This implies that for $n$ large enough, $\int_{U_t-\delta}^{U_t}X_{t-\delta_n}(x)dx>2^{-n}$, and so
\[\tau_n+3\sqrt{\delta_n}>\tau_n>U_t-\delta\quad\text{for $n$ large}.\]
Therefore for $n$ sufficiently large,
\[L_t([\tau_n+3\sqrt{\delta_n},\infty))\le L_t((U_t-\delta,\infty))=L_t((U_t-\delta,U_t)),\]
and the result follows.\qed \\

This completes the proof of Theorem~\ref{thm_01-law} under $P^X_{X_0}$. To see that the same holds under $\N_0$, we apply the above result with $X_0 = \delta_0$. We may assume that $X_t$ is defined by the right-hand side of the cluster decomposition \eqref{e_clusterdecomp1}. So $X_t$ is a sum of $N \sim \text{Poisson}(2/t)$ independent canonical clusters with law $\N_0(X_t \in \cdot \, | \, X_t > 0 )$, and $N=1$ with probability $2t^{-1} e^{-2/t}>0$. In particular we can condition on $N=1$, which gives
\[\N_0 \big( X_t((U_t-\delta,U_t))>0\text{ for all }\delta>0 \, \big| \, X_t > 0 \big) = P^X_{\delta_0} \big(  X_t((U_t-\delta,U_t))>0\text{ for all }\delta>0\, \big| \, N = 1 \big) = 1 \] 
by the result under $P_{\delta_0}^X$ and the inclusion $\{N=1\}\subset\{X_t>0\}$.
Thus the result also holds under $\N_0$. 
\qed

\section{Localization}\label{sec:local01}
In this section we prove Theorem \ref{thm_01_law_local}, which states that $L_t$ has positive mass on any neighborhood of any point in $\partial S(X_t)$ almost surely. The proof uses both decompositions from  Section~\ref{SBMdecomps}, Theorem \ref{thm_01-law}, and the elementary topological fact that if $x \in \partial S(X_t)$, there is a sequence of open ``holes'' in the support near $x$ (Lemma~\ref{lemma_topsupport} below).\\

 Let $d_H$ denote the Hausdorff metric on non-empty compact subsets of $\R$.
That is, $d_H(K_1,K_2)=d_0(K_1,K_2)+d_0(K_2,K_1)$, where $d_0(K_1,K_2)=\inf\{\delta>0:K_1\subset K_2^\delta\}$ and $K_2^\delta$ is the set of points which are less than distance $\delta$ from $K_2$.
\begin{lemma} \label{lemma_topsupport}
$x_0 \in \partial S(X_t)$ if and only if there exists two sequences of non-empty open intervals $I_m$ and $J_m$ such that $d_H(\overline{I_m}, \{x_0\}), d_H(\overline{J_m}, \{x_0\}) \to 0$ as $m \to \infty$, which satisfy $X_t(\cdot)\restrict{I_m} = 0$ and $X_t(\cdot) \restrict{J_m} > 0$ for all $m$.
\end{lemma}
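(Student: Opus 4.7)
The proof is essentially topological, once we recall that the density $X(t,\cdot)$ is continuous (from the SPDE \eqref{SPDE}), so $\{x:X(t,x)>0\}$ is open and $\{x:X(t,x)=0\}$ is closed. In particular $S(X_t)=\overline{\{X(t,\cdot)>0\}}$ is closed, and its complement $S(X_t)^c$ is precisely the \emph{interior} of $\{X(t,\cdot)=0\}$: a point $x$ lies in $S(X_t)^c$ iff there is an open neighborhood of $x$ on which $X(t,\cdot)\equiv 0$. Since $S(X_t)$ is closed, $\partial S(X_t)=S(X_t)\cap\overline{S(X_t)^c}$.

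For the forward implication, suppose $x_0\in\partial S(X_t)$. First, $x_0\in S(X_t)=\overline{\{X(t,\cdot)>0\}}$ gives a sequence $z_m\to x_0$ with $X(t,z_m)>0$; by continuity of $X(t,\cdot)$ there is an open interval around $z_m$ on which $X(t,\cdot)>0$, and I take $J_m$ to be such an interval of diameter at most $|z_m-x_0|/m$, so that $\sup_{y\in\overline{J_m}}|y-x_0|\to 0$, which is exactly $d_H(\overline{J_m},\{x_0\})\to 0$. Second, $x_0\in\overline{S(X_t)^c}$ yields $y_m\to x_0$ with $y_m\in S(X_t)^c$; since $S(X_t)^c$ is open, I may choose open intervals $I_m\ni y_m$ with $I_m\subset S(X_t)^c$ and diameter at most $|y_m-x_0|/m$. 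On each such $I_m$ we have $X(t,\cdot)\equiv 0$, and again $d_H(\overline{I_m},\{x_0\})\to 0$.

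For the reverse implication, the hypotheses on $J_m$ and the convergence $d_H(\overline{J_m},\{x_0\})\to 0$ produce points of $\{X(t,\cdot)>0\}$ arbitrarily close to $x_0$, so $x_0\in\overline{\{X(t,\cdot)>0\}}=S(X_t)$. For the $I_m$, the fact that each $I_m$ is an open interval with $X(t,\cdot)\equiv 0$ means each point of $I_m$ has a neighborhood (namely $I_m$ itself) in the zero set, so $I_m\subset S(X_t)^c$. Again $d_H(\overline{I_m},\{x_0\})\to 0$ gives $x_0\in\overline{S(X_t)^c}$. Combining, $x_0\in S(X_t)\cap\overline{S(X_t)^c}=\partial S(X_t)$.

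There is no serious obstacle; the only point worth emphasizing is the equivalence between ``$X(t,\cdot)$ vanishes on an open set $I$'' and ``$I\subset S(X_t)^c$'', which is what lets the $I_m$ detect membership in $\overline{S(X_t)^c}$. All other manipulations are standard facts about the Hausdorff metric on compact subsets of $\R$, namely that $d_H(\overline{I},\{x_0\})=\sup_{y\in\overline{I}}|y-x_0|$.
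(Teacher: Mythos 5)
Your argument is correct and takes essentially the same route as the paper: the forward direction hinges on the observation that $S(X_t)^c$ is open and nonempty near $x_0$ (equivalently, $x_0$ is not an interior point of $S(X_t)$), from which one extracts shrinking intervals inside the zero set, while the $J_m$ come from continuity of $X(t,\cdot)$; you also fill in the converse, which the paper leaves as an exercise. One harmless slip: with the paper's convention $d_H(K_1,K_2)=d_0(K_1,K_2)+d_0(K_2,K_1)$, we actually have $d_H(\overline{I},\{x_0\})=\sup_{y\in\overline{I}}|y-x_0|+\inf_{y\in\overline{I}}|y-x_0|$ rather than just the supremum, but this is comparable to the supremum up to a factor of $2$, so all your convergence statements go through unchanged.
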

\begin{proof}
Let $x_0 \in \partial S(X_t)$. A sequence $(J_m)_{m=1}^\infty$ with the described conditions must exist because $X_t(\cdot)$ is continuous. We know $B(x_0,2^{-m})\not\subset \overline{\{X_t>0\}}$ because $x_0$ is not an interior point of $\overline{\{X_t>0\}}$. So we may choose an open interval $I_m$ inside the non-empty open set $ B(x_0,2^{-m})\cap\overline{\{X_t>0\}}^c$ which is contained in $B(x_0,2^{-m})\cap\{X_t=0\}$, as required. We leave the converse as an easy exercise. 
\end{proof}

\emph{Proof of Theorem \ref{thm_01_law_local}.} 
We first work under $P_{X_0}^X$ and may assume $X_t=H_t(\{y_t\in\cdot\})$ where
$H$ is an associated historical process. Let $t>0$, $q \in \Q$, and $\delta_n = 2^{-n}$ where we may consider only $\delta_n<t$.   
We again use the decomposition \eqref{histdecomp}, now with $\tau=q$ and $\delta=\delta_n$, that is
\begin{align} \label{e_historicaldecomp}
\hat X^{L,q,\delta_n}_s(\phi) &= \int \phi(y_{t-\delta_n + s})\, 1(y_{t-\delta_n} < q)\, H_{t-\delta_n + s}(dy) \nonumber
\\ \hat{X}^{R,q,\delta_n}_s(\phi) &= \int \phi(y_{t-\delta_n + s})\, 1(y_{t-\delta_n} \geq q)\, H_{t-\delta_n + s}(dy).
\end{align}
As
$\hat X^{L,q,\delta_n}_s + \hat{X}^{R,q,\delta_n}_s = X_{t-\delta_n + s}$, both $\hat X^{L,q,\delta_n}_s$ and $\hat{X}^{R,q,\delta_n}_s$ have densities, which we denote by $\hat X^{L,q,\delta_n}_s(x)$ and $\hat{X}^{R,q,\delta_n}_s(x)$. 
Recall from \eqref{XRdef} that $X^{L,q,\delta_n}_{t-\delta_n}(A) = X_{t-\delta_n}(A \cap (-\infty,q))$ and $X^{R,q,\delta_n}_{t-\delta_n}(A) = X_{t-\delta_n}(A \cap [q,\infty))$ for measurable $A \subseteq \R$. Let $\cF_t$ be the usual right continuous, completed filtration generated by $H$.  By \eqref{histdecomp} and \eqref{histcondlaw}, we have
\begin{align}\label{condlaw}
&X_t=\hat{X}^{R,q,\delta_n}_{\delta_n}+\hat{X}^{L,q,\delta_n}_{\delta_n},\text{ and conditional on $\cF_{t-\delta_n}$, $\hat X^{L,q,\delta_n}$ and $\hat{X}^{R,q,\delta_n}$ are independent}\\
&\text{super-Brownian motions with initial laws $X^{L,q,\delta_n}_{t-\delta_n}$ and $X^{R,q,\delta_n}_{t-\delta_n}$, respectively.}\nonumber
\end{align}
Therefore by Theorem A, for each $s>0$, $\hat{X}_s^{R,q,\delta_n}$ and $\hat{X}_s^{L,q,\delta_n}$ each have a boundary local time, which we denote by $\hat{L}^{R,q,\delta_n}_s$ and $\hat{L}^{L,q,\delta_n}_s$, respectively. By \eqref{condlaw} and applying Theorem~\ref{THMW} conditionally on $\cF_{t-\delta_n}$, we have the following decomposition of $L_t$:
\begin{align} \label{e_localtimedecomp}
L_t(\phi)= \int \phi(x) 1(\hat{X}^{R,q,\delta_n}_{\delta_n}(x) = 0) \, d\hat{L}^{L,q,\delta_n}_{\delta_n}(x) +\int \phi(x) 1(\hat{X}^{L,q,\delta_n}_{\delta_n}(x) = 0) \, d\hat{L}^{R,q,\delta_n}_{\delta_n}(x).
\end{align}
Let $U^{q,\delta_n} = \sup S(\hat{X}^{L,q,\delta_n}_{\delta_n})$.  By \eqref{condlaw} and applying Theorem \ref{thm_01-law} conditionally on $\cF_{t-\delta_n}$, we see that \hfil\break
$\hat{L}^{L,q,\delta_n}_{\delta_n}((U^{q,\delta_n} - \delta, U^{q,\delta_n})) > 0$ for all $\delta > 0$ almost surely on $\{\hat{X}^{L,q,\delta_n}_{\delta_n} > 0\}$. Taking a union over countable events, this implies that
\begin{equation} \label{e_event1as}
\left( \forall \,n \in \N, \forall \, q\in \Q, \hat{X}^{L,q,\delta_n}_{\delta_n}(1) > 0  \Rightarrow \hat{L}^{L,q,\delta_n}_{\delta_n}((U^{q,\delta_n} - \delta, U^{q,\delta_n})) > 0 \,\, \forall \, \delta> 0 \right) \,\,\, P^X_{X_0}\text{-a.s.}
\end{equation}
The fact that $S( \hat{L}^{R,q,\delta_n}_{\delta_n})\subset \partial\{x:\hat{X}^{R,q,\delta_n}_{\delta_n}(x)>0\}$  implies that
\[\hat{X}_{\delta_n}^{R,q,\delta_n}((-\infty,U^{q,\delta_n}))=0\Rightarrow \hat{L}^{R,q,\delta_n}_{\delta_n}((-\infty,U^{q,\delta_n}))=0\text{ a.s.}.\]
Therefore by \eqref{e_localtimedecomp} and \eqref{e_event1as},
\begin{align} \label{e_event2as}
\Bigl( \forall \,n \in \N, \forall \, q \in \Q, \hat{X}^{L,q,\delta_n}_{\delta_n} (1)> 0 \text{ and }&\hat{X}^{R,q,\delta_n}_{\delta_n}((-\infty,U^{q,\delta_n})) = 0 \text{ imply }\\
\nonumber& {L}_t((U^{q,\delta_n} - \delta, U^{q,\delta_n})) =\hat{L}^{L,q,\delta_n}_{\delta_n}((U^{q,\delta_n}-\delta,U^{q,\delta_n}))> 0\ \forall\delta>0\Bigr) \,\,\, P^X_{X_0}\text{-a.s.}
\end{align}
Recall the definitions of $h$ and $K(c,\delta)$ from Section~\ref{SBMdecomps} (see \eqref{Kdef}). By the modulus of continuity for $S(H_t)$, \eqref{Hmod}, 
if $c>2$, then $P^X_{X_0}$-a.a. $\omega$, there exists $\delta = \delta(c,\omega) > 0$ such that $S(H_t) \subset K(c,\delta)$. Thus there exists $\Omega_0 \subset \Omega$ such that $P^X_{X_0}(\Omega_0^c) = 0$ and for all $\omega \in \Omega_0$, the event in (\ref{e_event2as}) and
$S(H_t)\subset K(3,\delta)$ for some $\delta(3,\omega)>0$ both hold.
Let $\omega \in \Omega_0$. Let $a<b$ and suppose that $(a,b) \cap \partial S(X_t) \neq \emptyset$. Then there exists $x_0 \in \partial S(X_t) \cap (a,b)$. By Lemma \ref{lemma_topsupport} there are sequences $(I_m)_{m = 1}^\infty$, $(J_m)_{m=1}^\infty$ of non-empty open intervals converging to $x_0$ with respect to $d_H$ such that $X_t(\cdot)\restrict{I_m} = 0$ and $X_t(\cdot) \restrict{J_m} > 0$. Suppose $I_m = (a_m,b_m)$ and $J_m = (d_m,e_m)$. Since $I_m, J_m \to \{x_0\}$, we can consider $m$ large enough so that $\overline{I_m}, \overline{J_m} \subset (a,b)$. Without loss of generality we assume that $J_m$ lies to the left of $I_m$, ie. that $e_m < a_m$. (If $J_m$ lies to the right of $I_m$, a symmetrical argument dealing with the left-hand endpoints $L^{r,\delta_n}$ of the supports of $X^{r,\delta_n}$ applies). \\

Let $I_m'$ be the open middle third of $I_m$, i.e., $b_m - a_m = l_m$ and $I_m' = (a_m + l_m/3, a_m + 2l_m/3)$. Choose $q \in \Q \cap I_m'$ and $n \in \N$ large enough so that $3h(\delta_n) < l_m / 3$ and $\delta_n < \delta(3,\omega)$. By (\ref{e_historicaldecomp}) and the modulus of continuity,
\begin{equation} 
S(\hat{X}^{R,q,\delta_n}_{\delta_n}) \subseteq (q - 3h(\delta_n), \infty) \subseteq(a_m,\infty), \nonumber
\end{equation}
and hence
\begin{equation} \label{e_nomasscross}
\hat{X}^{R,q,\delta_n}_{\delta_n}((-\infty,a_m]) = 0.
\end{equation}
Moreover, because $(\hat{X}^{L,q,\delta_n}_{\delta_n}+\hat{X}^{R,q,\delta_n}_{\delta_n})(\cdot)=X_t(\cdot)>0$ on $J_m = (d_m, e_m)$, and $e_m < a_m$, we have that 
\begin{equation}\label{Xrdmasspos}
\hat{X}^{L,q,\delta_n}_{\delta_n}((d_m, a_m)) > 0.
\end{equation} 
Furthermore, the modulus of continuity also implies that 
\[S(\hat{X}^{L,q,\delta_n}_{\delta_n}) \subseteq  (-\infty, q+3h(\delta_n)) \subseteq (-\infty, b_m),\]
where the last inclusion holds since $3h(\delta_n)<l_m/3$ and $q\in I_m'$.
The above, together with $X_t((a_m,b_m)) = 0$, implies that $S(\hat{X}^{L,q,\delta_n}_{\delta_n}) \subseteq (-\infty,a_m]$. This and \eqref{Xrdmasspos} imply $U^{q,\delta_n} \in (d_m,a_m] \subset (a,b)$. By (\ref{e_nomasscross}) this implies that $\hat{X}^{R,q,\delta_n}_{\delta_n}((-\infty,U^{q,\delta_n}))=0$ and so by \eqref{Xrdmasspos} we may apply (\ref{e_event2as}) and conclude that  $L_t((U^{q,\delta_n} - \delta, U^{q,\delta_n})) > 0$ for all $\delta >0$. Since $U^{q,\delta_n} \in (d_m,a_m]$, choosing $\delta =  (d_m - a) / 2>0$ (the last by $\overline{J_m}\subset (a,b)$) gives $(U^{q,\delta_n} - \delta, U^{q,\delta_n}) \subset (a,b)$, and hence $L_t((a,b)) > 0$. This proves the result for $P^X_{X_0}$.\\

To see that the same holds under $\N_0$, we proceed as in the proof of Theorem~\ref{thm_01-law} and condition that the Poisson number of clusters, $N$ in \eqref{e_clusterdecomp1}, is one to get
\[\N_0 \big( (a,b) \cap \partial S(X_t) \neq \emptyset \Rightarrow L_t((a,b)) > 0 \, \big| \, X_t > 0 \big) = P^X_{\delta_0} \big( (a,b) \cap \partial S(X_t) \neq \emptyset \Rightarrow L_t((a,b)) > 0 \, \big| \, N = 1 \big) = 1. \] 
Thus, under $\N_0$ the result holds almost surely on $\{X_t > 0 \}$ for all rational $a,b$, and hence holds almost surely.  \qed

\end{document}